\newcommand*{\GetListMember}[2]{%
   \edef\dotheloop{%
   \noexpand\foreach \noexpand\a [count=\noexpand\i] in {#1} {%
      \noexpand\IfEq{\noexpand\i}{#2}{\noexpand\a\noexpand\breakforeach}{}%
   }}%
   \dotheloop
   \par%
}%
\newcommand{\aw}{\operatorname{aw}}
\newtheorem{thm}{Theorem}[section]
\newtheorem{defn}[thm]{Definition}
\newtheorem{prop}[thm]{Proposition}
\newtheorem{cor}[thm]{Corollary}
\newtheorem{lem}[thm]{Lemma}
\newtheorem{ex}[thm]{Example}
\newcommand{\bpf}{\begin{proof}}
\newcommand{\epf}{\end{proof}}
\newcommand{\rb}{\operatorname{rb}}
\newcommand{\mn}{[m]\times[n]}
\newcommand{\DD}{\mathbb{D}}
\definecolor{purple}{rgb}{0.64 ,0.17, 1.0}
\definecolor{MyGreen}{rgb}{0.15, 0.5, 0.06}
\definecolor{BurntOrange}{rgb}{0.7,0.35,0.0}
\definecolor{pink}{rgb}{1.0, 0.1, 0.5}
\definecolor{cbblue}{RGB}{68,119,170}
\definecolor{cbcyan}{RGB}{102,204,238}
\definecolor{cbgreen}{RGB}{34,136,51}
\definecolor{cbyellow}{RGB}{204,187,68}
\definecolor{cbred}{RGB}{238,102,119}
\definecolor{cbpurple}{RGB}{170,51,119}
\definecolor{cbgray}{RGB}{187,187,187}
\title{Rainbow numbers of $[m] \times [n]$ for $x_1+x_2 = x_3$}
\author[1]{Kean Fallon}
\author[2]{Ethan Manhart}
\author[1]{Joe Miller}
\author[3]{Hunter Rehm}
\author[2]{Nathan Warnberg}
\author[1]{Laura Zinnel}
\affil[1]{Department of Mathematics, Iowa State University, \{keanpf, jmiller0, lezinnel\}@iastate.edu}
\affil[2]{Department of Mathematics and Statistics, University of Wisconsin-La Crosse, ethan.manhart@gmail.com, nwarnberg@uwlax.edu}
\affil[3]{Department of Mathematics and Statistics, University of Vermont, hunter.rehm@uvm.edu}
\date{\today}
\begin{document}

\maketitle
\section*{Abstract}

Consider the set $[m]\times [n] = \{(i,j)\, : 1\le i \le m, 1\le j \le n\}$ and the equation $x_1+x_2 = x_3$, namely $eq$.  
The \emph{rainbow number of $[m] \times [n]$ for $eq$}, denoted $\rb([m]\times [n],eq)$, is the smallest number of colors such that for every surjective $\rb([m]\times[n], eq)$-coloring of $[m]\times [n]$ there must exist a solution to $eq$, with component-wise addition, where every element of the solution set is assigned a distinct color. 
This paper determines that $\rb(\mn, eq) = m+n+1$ for all values of $m$ and $n$ that a greater than or equal to $2$.

{\bf Keywords:} rainbow, anti-Ramsey, Schur

\section{Introduction}\label{sec:intro}

Given a set $S$, a coloring of $S$ assigns each element a color. Ramsey theory is the study of guaranteeing monochromatic structures in $S$.  
On the other hand, anti-Ramsey theory is the study of guaranteeing polychromatic (or rainbow) structures in $S$ and has gained the interest of many authors when $S$ is $[n] = \{1,2,\dots, n\}$ or $\mathbb{Z}_n$ \cite{ATHNWY, AF, BKKTTY, RFC, J, LM}. 
As an example, the anti-van der
Waerden number on $[n]$, denoted $\aw([n], k)$, is the smallest number
of colors such that every exact~(onto) $\aw([n], k)$-coloring of $[n]$ is guaranteed to have an
arithmetic progression of length $k$ where each element of the progression is colored
distinctly, \cite{BSY,DMS, U}.  The anti-van der Waerden number has also been studied in graphs, \cite{SWY, MW, RSW}, on finite abelian groups, \cite{finabgroup}, and were generalized further on $\mathbb{Z}_n$ for linear equations,~\cite{ATHNWY, BKKTTY, RFC, LM}. 
This inspired the authors in \cite{FGRWW} to look into rainbow numbers of $[n]$ for linear equations.

The rainbow number of $[n]$ for $eq$, denoted $\rb([n], eq)$, is the smallest number of colors such that for every exact
$\rb([n], eq)$-coloring of $[n]$, there exists a solution to $eq$ with every member of the
solution set assigned a distinct color. 
In this paper, the rainbow number of $[n]\times [m]$ for equation $x_1+x_2=x_3$ is solved completely.
Section~\ref{sec:genlem} establishes vocabulary, contains preliminary results and provides Examples and Figures that are referenced throughout the paper. 
Each section following contains a case analysis that ultimately leads to a complete solution. 
Section~\ref{sec:maindiagthree} analyzes $(m+n+1)$-colorings of $\mn$ when the main diagonal has three colors.
The analysis continues into Section \ref{sec:nojumps} when the main diagonal has four or more colors.  
Finally, Section \ref{sec:L} discusses some forbidden structures in rainbow-free $(m+n+1)$-colorings of $\mn$ and completes the analysis.
\section{Preliminaries}\label{sec:genlem}

 An \emph{$r$-coloring} of a set $S$ is a function $c:S \to [r]$, where $[r] = \{1,2,\dots,r\}$, and an $r$-coloring is \emph{exact} if it is surjective. 
 If $X\subseteq S$, then $c(X) = \{c(x)\,:\, x\in X\}$.  
 Given an equation $eq$, a solution in $S$ to $eq$ is a subset of elements of $S$ that satisfy the equation.  
 A solution $s$ is a \emph{rainbow solution to $eq$ with respect to coloring $c$} if the colors of the elements of $s$ are pairwise distinct.  If context is clear the reference to the coloring $c$ will be dropped and `$s$ is a rainbow solution' will often be used.  The \emph{rainbow number of $S$ for $eq$}, denoted $\rb(S,eq)$, is the smallest number such that every exact $\rb(S,eq)$-coloring of $S$ contains a rainbow solution to $eq$.  A coloring of $S$ that has no rainbow solutions to $eq$ is called \emph{rainbow-free}.  Although solutions to $eq$ are sets, they will often be thought of as lists.  That is, if it is claimed that $\{\alpha, \beta,\gamma\}$ is a solution then, in most cases, $\alpha + \beta = \gamma$.  Solutions with repeated elements cannot be rainbow and will be referred to as \emph{degenerate}.  If a set $S$ has no solutions to $eq$ then the convention is that $\rb(S,eq) = |S| + 1$.
 
Throughout the paper the set $S$ that will be discussed is $[m]\times [n] = \{(i,j) : i,j\in \mathbb{Z},  1\le i \le m \text{ and } 1\le j \le n\}$, with $m\le n$, and the equation, $eq$, is $x_1 + x_2 = x_3$.  Note that addition is component-wise and that if $m=1$ there are no solutions so, following convention, $\rb([1]\times [n]) = n+1$.

Define the \emph{$k^{th}$ diagonal} $D_k$ as the set 

\[D_k = \big\{(i,j)\in [m]\times[n]\,:\, m-k=i-j\big\}.\]

For reference, the authors view the array with $m$ rows and $n$ columns and the upper left element as $(1,1)$ thus, $D_1 = \{(m,1)\}$ and is in the bottom left corner.  The diagonal $D_m$ is called the \emph{main diagonal}.  A diagonal that is not the main diagonal will be referred to as an \emph{off-diagonal}.  Observe that $[m]\times[n]$ has $m + n - 1$ diagonals. 

 Definition \ref{defn:si} was established by the authors in \cite{FGRWW}.

 \begin{defn}\cite{FGRWW}\label{defn:si} Let $c$ be an exact $r$-coloring of $[n]$.  
Define $\mathcal{C}_i = \{a\in [n]\,:\,c(a) = i\}$ and $s_i\in \mathcal{C}_i$ such that $s_i$ is the smallest element of $\mathcal{C}_i$ for each color $i$.  Note that for any exact $r$-coloring $c$, it is always possible to have $s_i < s_j$ for $i < j$.
\end{defn}
 A modified definition is introduced for the purposes of this paper.  
 Let $c$ be a coloring of $[m]\times [n]$.  Without loss of generality, it can and will be assumed that if $|c(D_m)|=\ell$, then $c(D_m) = \{1,2,\dots,\ell\}$.  
 Now, define $s_1 = 1$ and for $k \geq 2$, define
\[\displaystyle
    s_k = \min_{1 \le x \le m}\big\{x: c((x,x)) \neq c((s_j,s_j)) \textrm{ for all $j < k$}\big\}.\]
 For any exact $r$-coloring $c$, it is always possible to have $s_i < s_j$ for $i < j$. If that is not the case, say $s_i > s_j$ and $i < j$, an isomorphic coloring can be created by swapping the color of any element with color $i$ to have color $j$ and vice versa.  Thus, it will also be assumed that $s_i < s_j$ when $i < j$.

In \cite{FGRWW}, rainbow numbers for the set $[m]$ for equation $x_1+x_2=x_3$ are investigated.  Note that that main diagonal of $\mn$ behaves similarly to $[m]$, thus results from \cite{FGRWW} can be applied to the main diagonal. Further, $s_i$ in Lemma \ref{FGRWWrainbowlem} is defined similarly to the $s_i$ established in this paper.

The following four results are from \cite{FGRWW} or extensions of results from \cite{FGRWW} to $[m]\times [n]$.  In particular, Lemma \ref{lem:s2} limits the number of colors that can appear in $D_m$.  The wording has been modified to match the language used in this paper.

\begin{lem}\label{FGRWWrainbowlem}\cite{FGRWW} Let $c$ be an exact, rainbow-free $r$-coloring of $[m]$ for $x_1+x_2 = x_3$, with color set $\{0,1,\dots, r-1\}$, then
    \begin{enumerate}
        \item if $s_i = \ell$, then $2\ell \le s_{i+1}$ for $0\le i \le r-2$,
        \item $2^i \le s_i$ for $0\le i \le r-1$.
        
    \end{enumerate}
\end{lem}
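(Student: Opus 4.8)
The statement to prove is Lemma~\ref{FGRWWrainbowlem}, which records two structural facts about exact rainbow-free colorings of $[m]$ for $x_1+x_2=x_3$: that consecutive "first occurrence" values $s_i$ at least double, and consequently that $s_i \ge 2^i$. Since part~(2) is an immediate induction from part~(1) (with base case $s_0 = 1$), the real content is part~(1). So the plan is to prove part~(1) directly and then obtain part~(2) by a one-line induction.

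**Proof strategy for part (1).** The plan is a proof by contradiction. Suppose $s_i = \ell$ but $s_{i+1} < 2\ell$, i.e. $s_{i+1} \le 2\ell - 1$. The key observation is how the $s_i$ are defined: $s_{i+1}$ is the smallest element whose color does not appear among $\{c(s_0), \dots, c(s_i)\}$. I would first establish the crucial intermediate fact that \emph{every} element of $[m]$ strictly less than $s_{i+1}$ is colored with one of the colors $c(s_0), \dots, c(s_i)$ (this is essentially the defining minimality of $s_{i+1}$, but it needs to be spelled out using the convention that the $s_j$ are chosen in increasing order). In particular, all of $1, 2, \dots, s_{i+1}-1$ receive colors from the palette $\{c(s_0),\dots,c(s_i)\}$, and $s_{i+1}$ itself gets a brand-new color, call it $\kappa = c(s_{i+1})$, distinct from all of those.

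**Producing the rainbow solution.** Now I would exhibit a rainbow solution to $x_1 + x_2 = x_3$. Since $\ell = s_i$ is itself a "new color" element, $c(\ell)$ is distinct from $c(s_0),\dots,c(s_{i-1})$, and by assumption $s_{i+1} \le 2\ell - 1$, so both $s_{i+1} - \ell$ and $\ell$ lie in $\{1, \dots, s_{i+1}-1\} \subseteq \{1,\dots, 2\ell-2\}$ — here I'd need $s_{i+1} - \ell \ge 1$, which holds because $s_{i+1} > s_i = \ell$. Consider the triple $\{s_{i+1} - \ell,\ \ell,\ s_{i+1}\}$, which satisfies the equation. Its third element has the genuinely new color $\kappa$. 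Its second element $\ell = s_i$ has color $c(s_i)$, which is not $\kappa$. The first element $s_{i+1} - \ell$ has some color from the palette $\{c(s_0),\dots,c(s_i)\}$, again not $\kappa$. The remaining thing to check is that $c(s_{i+1}-\ell) \ne c(\ell)$; if it happened that $c(s_{i+1} - \ell) = c(s_i) = c(\ell)$, I would instead need another argument. This is the step I expect to be the main obstacle: one cannot in general force $c(s_{i+1}-\ell)\ne c(\ell)$ by the definitions alone, so the argument likely needs a different triple or an additional case split. A natural fix: if $c(s_{i+1}-\ell) = c(\ell)$, then since the element $s_{i+1}-\ell$ is strictly less than $\ell = s_i$ and has color $c(s_i)$, this contradicts the minimality in the definition of $s_i = \ell$ as the \emph{smallest} element receiving a color outside $\{c(s_0),\dots,c(s_{i-1})\}$ — so in fact $c(s_{i+1}-\ell)\in\{c(s_0),\dots,c(s_{i-1})\}$, which is automatically different from both $c(s_i)$ and $\kappa$. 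This closes the gap and yields a rainbow solution, contradicting rainbow-freeness. Hence $s_{i+1}\ge 2\ell = 2s_i$.

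**Finishing part (2).** With part~(1) in hand, part~(2) follows by induction on $i$: the base case is $s_0 = 1 = 2^0$ (the smallest element of $[m]$ is $1$, receiving color $0$), and the inductive step is $s_{i+1} \ge 2 s_i \ge 2\cdot 2^i = 2^{i+1}$. I would present this as a brief closing paragraph. The whole argument is short; the only delicate point is the case analysis on $c(s_{i+1}-\ell)$ described above, and making sure the indices and the "increasing $s_j$" convention are invoked correctly.
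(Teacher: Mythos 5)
Your proof is correct. Note that the paper itself gives no proof of this lemma --- it is imported from \cite{FGRWW} --- so there is nothing in-paper to compare against; your argument (assume $s_{i+1}\le 2s_i-1$ and use the triple $\{s_{i+1}-s_i,\ s_i,\ s_{i+1}\}$, where $1\le s_{i+1}-s_i<s_i$ forces $c(s_{i+1}-s_i)\in\{c(s_0),\dots,c(s_{i-1})\}$ by the minimality of $s_i$ together with the increasing-$s_j$ convention, yielding a rainbow solution) is the standard one and is essentially what the cited source does. Your resolution of the potential clash $c(s_{i+1}-s_i)=c(s_i)$ is exactly the right point to address, and part (2) does follow by the one-line induction from $s_0=1$.
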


\begin{cor}\label{cor:2power}
    If $c$ is a rainbow-free coloring of $[m]$ and $\ell = |c([m])|$, then $2^{i-2}s_2 \le s_i$ for $2\le i \le \ell$. In particular, $2^{\ell-2}s_2 \le m$.
\end{cor}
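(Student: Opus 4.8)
The plan is to derive the bound directly from part~(1) of Lemma~\ref{FGRWWrainbowlem} by a short induction on $i$, and then read off the ``in particular'' clause by noting that $s_\ell$ lies in $[m]$. So this is essentially a bookkeeping argument on top of the already-quoted lemma, with the one subtlety being the index conventions.

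First I would reconcile the indexing. Lemma~\ref{FGRWWrainbowlem} is stated with color set $\{0,1,\dots,r-1\}$ and asserts that whenever $s_i = \ell$ we have $2\ell \le s_{i+1}$, i.e.\ $2s_i \le s_{i+1}$ for consecutive first-occurrence values. This doubling inequality does not depend on whether the color classes are labelled starting from $0$ or from $1$; it only uses that $s_i$ and $s_{i+1}$ are the smallest elements of two distinct color classes with $s_i < s_{i+1}$. Hence for the rainbow-free coloring $c$ of $[m]$ in the statement, with $\ell = |c([m])|$ and the $s_i$ as defined in this paper (so $s_1 = 1 < s_2 < \dots < s_\ell$), we get $2s_i \le s_{i+1}$ for all $1 \le i \le \ell-1$.

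Next I would run the induction on $i$ for $2 \le i \le \ell$. The base case $i = 2$ is trivial since $2^{0}s_2 = s_2 \le s_2$. For the inductive step, assuming $2^{\,i-3}s_2 \le s_{i-1}$ for some $3 \le i \le \ell$, the doubling inequality gives $s_i \ge 2 s_{i-1} \ge 2 \cdot 2^{\,i-3} s_2 = 2^{\,i-2} s_2$, which closes the induction and establishes $2^{\,i-2}s_2 \le s_i$ for all $2 \le i \le \ell$. Finally, for the ``in particular'' statement, I would take $i = \ell$: since $s_\ell \in [m]$ we have $s_\ell \le m$, so $2^{\ell-2} s_2 \le s_\ell \le m$.

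I do not expect a genuine obstacle here; the content is entirely contained in Lemma~\ref{FGRWWrainbowlem}. The only things to be careful about are that the hypotheses (exact and rainbow-free on $[m]$) are exactly those needed to invoke that lemma, and that the index offset between the two papers' conventions for $s_i$ is tracked correctly so that the exponent comes out as $i-2$ rather than $i-1$.
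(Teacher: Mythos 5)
Your proposal is correct and follows essentially the same route as the paper: an induction on $i$ whose step combines the inductive hypothesis with the doubling inequality $2s_j \le s_{j+1}$ from Lemma~\ref{FGRWWrainbowlem} part~1, and the ``in particular'' clause follows from $s_\ell \le m$. The only difference is that you spell out the index-convention reconciliation and the final step $s_\ell \le m$ explicitly, which the paper leaves implicit.
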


\bpf The proof will proceed by induction on $i$.  The base case, when $i=2$, is clear.  As the inductive hypothesis, assume for some $2\le j \le \ell-1$ that $2^{j-2}s_2 \le s_j$ which gives $2^{j-1}s_2 \leq 2s_j$.  Further, by Lemma \ref{FGRWWrainbowlem} part 1, $2s_j \le s_{j+1}$. Therefore, $2^{j-1}s_2 \le s_{j+1}$ which completes the induction.\epf
    
\begin{thm}\label{FGRWWrainbowthm}\cite{FGRWW}
For $m\ge 3$, $\rb([m],x_1+x_2 = x_3) = \left\lfloor \log_2(m) + 2\right\rfloor$.
\end{thm}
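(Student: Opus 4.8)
The plan is to prove the two matching bounds $\rb([m],x_1+x_2=x_3)\le\lfloor\log_2 m\rfloor+2$ and $\rb([m],x_1+x_2=x_3)\ge\lfloor\log_2 m\rfloor+2$; since $\lfloor\log_2 m+2\rfloor=\lfloor\log_2 m\rfloor+2$, together these give the stated value. For the upper bound I would argue by contradiction: suppose $c$ is an exact, rainbow-free $r$-coloring of $[m]$ with $r=\lfloor\log_2 m\rfloor+2$, and form $s_0<s_1<\dots<s_{r-1}$ as in Definition~\ref{defn:si}, noting $s_0=1$ because $1$ is the least element of $[m]$. Lemma~\ref{FGRWWrainbowlem}(2) gives $2^{\,r-1}\le s_{r-1}\le m$, hence $r-1\le\log_2 m$; but $r-1=\lfloor\log_2 m\rfloor+1>\log_2 m$, a contradiction. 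Thus no exact rainbow-free $(\lfloor\log_2 m\rfloor+2)$-coloring of $[m]$ exists, which is exactly the desired upper bound.

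The substance is the matching lower bound, for which I would exhibit one explicit exact, rainbow-free coloring of $[m]$ with $\lfloor\log_2 m\rfloor+1$ colors: let $c(x)=\nu_2(x)$, the $2$-adic valuation of $x$ (the largest $e$ with $2^e\mid x$). This $c$ is surjective onto $\{0,1,\dots,\lfloor\log_2 m\rfloor\}$ — each value $j$ in that range is attained at $x=2^j\le 2^{\lfloor\log_2 m\rfloor}\le m$, while $\nu_2(x)\le\log_2 x\le\log_2 m$ rules out any larger value — so $c$ uses exactly $\lfloor\log_2 m\rfloor+1$ colors. And $c$ is rainbow-free: in any solution $x_1+x_2=x_3$ we have either $\nu_2(x_1)=\nu_2(x_2)$, so two colors coincide, or, say, $\nu_2(x_1)<\nu_2(x_2)$, in which case $\nu_2(x_3)=\nu_2(x_1+x_2)=\min\{\nu_2(x_1),\nu_2(x_2)\}=\nu_2(x_1)$, so again two colors coincide. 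Hence $\rb([m],x_1+x_2=x_3)>\lfloor\log_2 m\rfloor+1$, completing the lower bound.

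I expect the only real obstacle to be spotting the right lower-bound coloring: the naive ``dyadic block'' coloring $x\mapsto\lfloor\log_2 x\rfloor$ does \emph{not} work (for instance $\{1,3,4\}$ is rainbow in $[4]$), and one must realize that what obstructs solutions of $x_1+x_2=x_3$ is the ultrametric behaviour of $\nu_2$ under addition rather than the sizes of the summands. Once the valuation coloring is identified, both directions are short; the hypothesis $m\ge 3$ is needed only so that $[m]$ possesses a nondegenerate solution at all (e.g.\ $1+2=3$), ensuring the convention $\rb(S,eq)=|S|+1$ is not in force.
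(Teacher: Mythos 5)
Your proposal is correct, but there is nothing in this paper to compare it against: Theorem~\ref{FGRWWrainbowthm} is imported from \cite{FGRWW} and stated without proof, so you have in effect supplied the missing argument. Both halves check out. The upper bound is exactly the computation the paper itself performs elsewhere with the same tool: Lemma~\ref{FGRWWrainbowlem}(2) gives $2^{r-1}\le s_{r-1}\le m$ for any exact rainbow-free $r$-coloring (this is the same mechanism as Corollary~\ref{cor:2power} and Lemma~\ref{lem:s2}), and $r=\lfloor\log_2 m\rfloor+2$ then forces $\lfloor\log_2 m\rfloor+1\le\log_2 m$, a contradiction; note only that your argument is self-contained modulo Lemma~\ref{FGRWWrainbowlem}, which is likewise quoted from \cite{FGRWW} without proof, but using it is legitimate within this paper's framework. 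The lower bound via $c(x)=\nu_2(x)$ is a valid extremal coloring: it is surjective onto $\{0,\dots,\lfloor\log_2 m\rfloor\}$, hence exact with $\lfloor\log_2 m\rfloor+1$ colors, and rainbow-free because in any solution $x_1+x_2=x_3$ either $\nu_2(x_1)=\nu_2(x_2)$ or $\nu_2(x_3)=\min\{\nu_2(x_1),\nu_2(x_2)\}$; this coloring also saturates the bound $s_i\ge 2^i$ (here $s_i=2^i$), so it is the natural extremal example matching the lemma. Your side remarks (that $\lfloor\log_2 m+2\rfloor=\lfloor\log_2 m\rfloor+2$, that the block coloring $x\mapsto\lfloor\log_2 x\rfloor$ fails, and that $m\ge 3$ guarantees a nondegenerate solution) are all accurate.
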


\begin{lem}\label{lem:s2}
    If $c$ is a rainbow-free coloring of $\mn$ for $eq$ with $m\le n$, then $|c(D_m)| \le \left\lfloor \log_2(m) + 1\right\rfloor$ and $|c(D_m)| \le \log_2(m/s_2) + 2 $.
\end{lem}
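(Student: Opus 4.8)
The plan is to reduce to the one-dimensional results already recorded above by restricting $c$ to the main diagonal $D_m = \{(i,i) : 1 \le i \le m\}$. First I would observe that the bijection $\phi \colon [m] \to D_m$ given by $\phi(i) = (i,i)$ respects the equation: since addition in $[m]\times[n]$ is component-wise, $a + b = c$ holds in $[m]$ if and only if $\phi(a) + \phi(b) = \phi(c)$ holds in $[m]\times[n]$, so $\phi$ identifies the solutions of $x_1+x_2=x_3$ in $[m]$ with exactly the solutions of $eq$ contained in $D_m$. Hence the induced coloring $c' = c \circ \phi$ of $[m]$ is rainbow-free for $x_1+x_2=x_3$: a rainbow solution of $c'$ in $[m]$ would be carried by $\phi$ to a rainbow solution of $c$ lying inside $D_m \subseteq [m]\times[n]$, contradicting that $c$ is rainbow-free. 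Moreover $c'([m]) = c(D_m)$, so $\ell := |c(D_m)|$ equals the number of colors used by $c'$, and, under the standing normalizations (take $c(D_m) = \{1,\dots,\ell\}$ and reorder so that $s_i < s_j$ whenever $i < j$), the values $s_k$ defined in this paper coincide with the $s_k$ of Definition \ref{defn:si} applied to $c'$; this is the matching already flagged in the text preceding Lemma \ref{FGRWWrainbowlem}.

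With that reduction in hand the rest is a short computation. Assuming $\ell \ge 2$ (the case $\ell \le 1$ is immediate, since $\lfloor \log_2 m + 1\rfloor \ge 2$ for $m \ge 2$ and the second inequality involves no $s_2$), I would apply Corollary \ref{cor:2power} to the rainbow-free coloring $c'$ of $[m]$ to obtain $2^{\ell - 2} s_2 \le s_\ell \le m$. Taking base-$2$ logarithms and isolating $\ell$ gives $\ell \le \log_2(m/s_2) + 2$, which is the second bound. For the first bound I would invoke Lemma \ref{FGRWWrainbowlem}(1), which yields $s_2 \ge 2 s_1 = 2$, hence $\log_2(m/s_2) + 2 \le \log_2(m/2) + 2 = \log_2 m + 1$; since $\ell$ is an integer this forces $\ell \le \lfloor \log_2 m + 1\rfloor$. (Alternatively, the first bound follows directly from Theorem \ref{FGRWWrainbowthm} when $m \ge 3$, via $\ell \le \rb([m],x_1+x_2=x_3) - 1$.)

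The only step that needs genuine care is the reduction in the first paragraph: one must confirm that restricting $c$ to $D_m$ really does produce a rainbow-free coloring of a copy of $[m]$ and that the parameter $s_2$ used here agrees with the one from \cite{FGRWW}. I do not anticipate a real obstacle — the component-wise structure of $x_1+x_2=x_3$ makes the main diagonal behave exactly like $[m]$, as already noted before Lemma \ref{FGRWWrainbowlem} — but it is the point at which an argument could slip. Once it is established, both inequalities fall out of Corollary \ref{cor:2power} together with the estimate $s_2 \ge 2$.
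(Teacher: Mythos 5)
Your proposal is correct and follows essentially the same route as the paper: identify $D_m$ with $[m]$ via the diagonal bijection so that the restricted coloring is rainbow-free, then apply Corollary \ref{cor:2power} to get $2^{\ell-2}s_2 \le m$ (the second bound) and the FGRWW results for the first bound. The only cosmetic difference is that you derive $|c(D_m)| \le \lfloor \log_2(m)+1\rfloor$ from the second bound together with $s_2 \ge 2$, whereas the paper cites Theorem \ref{FGRWWrainbowthm} directly — an alternative you also note.
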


\bpf 
Note that solutions to $x+y=z$ in $D_m$ correspond bijectively to solutions to $x+y=z$ in $[m]$, so Theorem \ref{FGRWWrainbowthm} can be applied to $D_m$.  This gives $|c(D_m)| \le \left\lfloor \log_2(m) + 1\right\rfloor$ immediately.  Further, define $\ell = |c(D_m)|$ and note that Corollary \ref{cor:2power} implies $2^{\ell -2}s_2 \le m$, thus $\ell \le \log_2(m/s_2) + 2$, as desired.\epf

Lemma \ref{mnlower} provides an exact, $(m+n)$-coloring that avoids rainbow solutions which establishes a lower bound on $\rb(\mn, eq).$  
The remaining results analyze the appearance of solutions to $eq$ in $\mn$ and the structure of a rainbow-free coloring on $\mn$ to help establish an upper bound on $\rb(\mn, eq)$.

\begin{lem}\label{mnlower}
For $2\le m\le n$, $m+n+1 \le \rb(\mn,eq)$.
\end{lem}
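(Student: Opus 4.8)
The plan is to exhibit an explicit exact $(m+n)$-coloring of $\mn$ that contains no rainbow solution to $x_1+x_2=x_3$; by the definition of the rainbow number (the smallest number of colors forcing a rainbow solution), producing such a coloring shows $\rb(\mn,eq) > m+n$, i.e. $m+n+1 \le \rb(\mn,eq)$. The natural candidate is a coloring built from the diagonal structure set up in the preliminaries, using roughly one color per row together with one color per column, which totals about $m+n$ colors. Concretely, I would try assigning to each cell $(i,j)$ a color determined by $\min(i,j)$ together with whether $i \le j$ or $i > j$ — that is, color the main diagonal and everything weakly above it by a "row-type" palette indexed by the row, and everything strictly below the main diagonal by a "column-type" palette indexed by the column. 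One must then count the colors used and check surjectivity onto a set of exactly $m+n$ colors (some bookkeeping is needed at the corner where the two palettes meet, to avoid double-counting or leaving a gap).

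The key steps, in order, would be: (1) write down the coloring precisely as a function $c:\mn \to [m+n]$ and verify it is surjective, so it is a genuine exact $(m+n)$-coloring; (2) take an arbitrary solution $\{(a_1,b_1),(a_2,b_2),(a_3,b_3)\}$ with $(a_1,b_1)+(a_2,b_2)=(a_3,b_3)$, so $a_1+a_2=a_3$ and $b_1+b_2=b_3$, and argue it cannot be rainbow; (3) do this by a short case analysis on which of the three cells lie weakly above versus strictly below the main diagonal, exploiting the fact that $a_3 > a_1,a_2$ and $b_3 > b_1,b_2$ force the "smaller coordinate" of the summands to relate to that of the sum. The heart of the argument is that within the row-palette region two of the three cells must share a color (because the relevant coordinate repeats, in the spirit of the degenerate-solution and $s_i$ arguments inherited from $[m]$), and similarly within the column-palette region; the only remaining possibility is a solution straddling the diagonal, which I would rule out by showing the sign condition $i \le j$ versus $i > j$ is incompatible with $a_1+a_2=a_3$, $b_1+b_2=b_3$ holding simultaneously.

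The main obstacle I anticipate is getting the coloring exactly right at the interface between the two palettes so that it is simultaneously (a) surjective onto exactly $m+n$ colors — not $m+n-1$ or $m+n+1$ — and (b) still rainbow-free, since these two demands pull in opposite directions: merging colors to hit the count risks creating a rainbow solution, while keeping them separate risks overshooting $m+n$. I expect the clean resolution is that the main diagonal $D_m$ should receive only a bounded number of colors (consistent with Lemma~\ref{lem:s2}), for instance a single color or two, with the remaining $\approx m+n-1$ or $m+n-2$ colors distributed one per off-diagonal cell in a way that makes every solution contain two equally-colored cells. Once the coloring is pinned down, verifying rainbow-freeness should reduce to a finite, low-complexity case check on the positions of $(a_1,b_1),(a_2,b_2),(a_3,b_3)$ relative to the diagonals $D_1,\dots,D_{m+n-1}$, using that $(a_3,b_3)$ lies on a strictly later diagonal or the same diagonal as each summand.
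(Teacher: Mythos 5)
Your overall plan---exhibit an explicit exact $(m+n)$-coloring of $\mn$ with no rainbow solution, so that $\rb(\mn,eq)\ge m+n+1$---is exactly the right (and the paper's) strategy, but the concrete coloring you propose does not work, and the gap is not just ``bookkeeping at the corner.'' First, the color count is wrong by far more than one: coloring the region $i\le j$ by the row index and the region $i>j$ by the column index (equivalently, by $\min(i,j)$ together with the side of the diagonal) uses only $m+(m-1)=2m-1$ colors, whereas you need exactly $m+n$; for $n>m$ the shortfall is $n-m+1$, so no local adjustment at the interface can make the coloring surjective onto $[m+n]$. Second, and more seriously, the coloring is not rainbow-free: for $m,n\ge 3$ take $(1,2)+(2,1)=(3,3)$. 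Here $(1,2)$ lies weakly above the diagonal with $\min=1$, $(2,1)$ lies strictly below with $\min=1$, and $(3,3)$ lies on the diagonal with $\min=3$, so all three colors are distinct. This directly refutes the claim in your step (3) that solutions straddling the diagonal are incompatible with $a_1+a_2=a_3$, $b_1+b_2=b_3$; such straddling solutions exist in abundance and are rainbow under your scheme.

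The paper's construction avoids both problems by placing the ``many'' colors only on cells that can never be summands: color every cell with $i<m$ and $j<n$ by the single color $1$, give the last-column cells $(i,n)$ with $i<m$ the distinct colors $2,\dots,m$, and give the last-row cells $(m,j)$ the distinct colors $m+1,\dots,m+n$. This is exactly $m+n$ colors, and if $\alpha+\beta=\gamma$ in $\mn$ then each summand satisfies $a_1\le m-1$ and $a_2\le n-1$, so both summands lie in the interior and receive color $1$; hence no solution is rainbow. If you want to salvage your write-up, replace the $\min(i,j)$-based palettes with this boundary-based construction (or any coloring in which all potential summands share a color); your vague fallback of ``one color per off-diagonal cell'' will generically create rainbow solutions unless the distinctly colored cells are confined to positions, like the last row and last column, that cannot occur as summands.
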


\begin{proof}Let $c:\mn \to [m+n]$ be defined by
\[c((i,j))=\left\{\begin{array}{ll}
                          1 & \text{if $ i<m$ and $j< n$,}\\
                          i+1 & \text{if $i<m\text{ and } j=n$,}\\
                          j+m & \text{if $i=m$.}
                             
                             \end{array}\right.\]
                             
Note that $(1,n)$ and $(m,1)$ are not in any solution to $eq$. If $\alpha,\beta,\gamma\in\mn$ such that $\alpha + \beta = \gamma$, then $c(\alpha)=c(\beta)=1$. Therefore, $c$ is rainbow-free for $eq$ and $m+n+1\leq \rb([m]\times[n],eq).$
\end{proof}

\begin{lem}\label{onedistinctcolorlem}  If $c$ is a rainbow-free coloring of $[m]\times [n]$ for $eq$ with $m\le n$, then, for all $D_x$ with $x\neq m$, $\big|c(D_x)\setminus c\big(D_m\big)\big|\leq 1$.
\end{lem}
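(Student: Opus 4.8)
The plan is to exploit a single structural fact: the component-wise difference of two points lying on a common diagonal is a point of the \emph{main} diagonal. Consequently, any two distinct points of an off-diagonal automatically generate a solution to $eq$ whose third element sits on $D_m$, and this will be enough to force all ``new'' colors on an off-diagonal to coincide.

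Concretely, I would fix an off-diagonal $D_x$ (so $x \neq m$) and argue by contradiction, supposing $\big|c(D_x) \setminus c(D_m)\big| \ge 2$. Then there are points $P = (a,b)$ and $Q = (a',b')$ of $D_x$ with $c(P) \neq c(Q)$ and $c(P), c(Q) \notin c(D_m)$. Since $P, Q \in D_x$ we have $a - b = a' - b' = m-x$, and $P \neq Q$ forces $a \neq a'$; relabel so that $a < a'$ and put $t = a'-a = b'-b \ge 1$. Because $1 \le a < a' \le m$, we get $1 \le t \le m-1$, so $R := (t,t)$ is a genuine point of $[m]\times[n]$, lying on $D_m$.

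Now $P + R = (a+t,\, b+t) = (a',b') = Q$, so $\{P, R, Q\}$ is a solution to $eq$. It is non-degenerate: $P \neq Q$ since $a < a'$, while $R \in D_m$ and $P, Q \in D_x$ with $D_x \neq D_m$, so $R$ is distinct from both $P$ and $Q$. Moreover its three colors are pairwise distinct, since $c(R) \in c(D_m)$ whereas $c(P), c(Q) \notin c(D_m)$, and $c(P) \neq c(Q)$ by choice. Hence $\{P, R, Q\}$ is a rainbow solution, contradicting rainbow-freeness of $c$; therefore $\big|c(D_x) \setminus c(D_m)\big| \le 1$.

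I do not anticipate a serious obstacle here, as the argument is essentially the diagonal-difference identity packaged into a contradiction. The only points meriting a moment's care are verifying that the manufactured third point $(t,t)$ truly lies in the array (which follows from $a,a' \in [m]$) and that the produced solution is non-degenerate (which follows from $D_x \neq D_m$, so that the main-diagonal point $R$ cannot coincide with $P$ or $Q$). When $D_x$ contains fewer than two points, or fewer than two points whose colors avoid $c(D_m)$, the conclusion is immediate, so the contradiction argument above handles the only nontrivial case.
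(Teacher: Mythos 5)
Your proposal is correct and is essentially the paper's own argument: two distinctly colored elements of an off-diagonal whose colors avoid $c(D_m)$ differ by a main-diagonal element, giving a rainbow solution and hence a contradiction. You simply spell out the existence of the main-diagonal point $(t,t)$ (which the paper leaves implicit, later formalized as the Landing Lemma), so no further comment is needed.
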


\bpf If $m=1$ each diagonal has one element and the result follows.  Thus, assume $2 \le m$.  Since $|D_1| = |D_{m+n-1}| = 1$, it is certainly true that $|c(D_1) \setminus c(D_m)| \leq 1$ and $|c(D_{m+n-1}) \setminus c(D_m)|\leq 1$. 

For the purpose of contradiction, assume $1 < x < m+n-1$ and $|c(D_x)\setminus c(D_m)|\geq 2$. Then there exists $\beta$, $\gamma \in D_x$ such that $c(\beta),c(\gamma)\in c(D_x)\setminus c(D_m)$. However, there exists $\alpha \in D_m$ such that $\{\alpha ,\beta,\gamma\}$ is a rainbow solution, a contradiction. Thus, $\big|c(D_x)\setminus c\big(D_m\big)\big|\leq 1$.
\epf

Corollary \ref{cor:m=2} follows quickly from Lemmas \ref{mnlower} and \ref{onedistinctcolorlem}.  This allows most of the rest of the paper to focus on the situation when $3 \le m \le n$.

\begin{cor}\label{cor:m=2}
 If $m=2$ and  $m\le n$, then $\rb([m] \times [n]) = m+n+1$.
    
\end{cor}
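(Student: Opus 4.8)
The plan is to pair the lower bound already supplied by Lemma~\ref{mnlower} with a short counting argument for the matching upper bound. Lemma~\ref{mnlower} gives $m+n+1\le \rb([2]\times[n],eq)$ directly, so the remaining task is to show $\rb([2]\times[n],eq)\le m+n+1$; equivalently, that every rainbow-free exact coloring of $[2]\times[n]$ uses at most $m+n$ colors.

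To prove that, I would fix a rainbow-free exact $r$-coloring $c$ of $[2]\times[n]$ and count colors diagonal by diagonal. With $m=2$ the array has exactly $m+n-1=n+1$ diagonals: the main diagonal $D_m=D_2=\{(1,1),(2,2)\}$ and $n$ off-diagonals. Since $D_2$ has only two cells we have $|c(D_2)|\le 2$ (alternatively this is immediate from Lemma~\ref{lem:s2}). By Lemma~\ref{onedistinctcolorlem}, each off-diagonal $D_x$ with $x\ne 2$ satisfies $|c(D_x)\setminus c(D_m)|\le 1$. Because the diagonals partition $[2]\times[n]$, every color not appearing on $D_2$ must lie in $c(D_x)\setminus c(D_2)$ for some $x\ne 2$, so the number of such colors is at most $n$. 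Hence $r=|c([2]\times[n])|\le |c(D_2)|+n\le m+n$, which is exactly the bound needed; combining with Lemma~\ref{mnlower} yields $\rb([2]\times[n],eq)=m+n+1$.

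This is essentially bookkeeping, so I do not expect a genuine obstacle; the only things to keep straight are the description of $D_m$ when $m=2$ (so that $|c(D_m)|\le 2$ is visibly correct) and the boundary case $n=2$, where $[2]\times[2]$ carries no nondegenerate solution and the convention $\rb(S,eq)=|S|+1=5$ already agrees with $m+n+1$. All the structural input — the explicit $(m+n)$-coloring behind the lower bound and the ``at most one new color per off-diagonal'' estimate — is already in hand, so the corollary follows quickly.
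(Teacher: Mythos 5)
Your argument is correct and is exactly the deduction the paper intends: the paper gives no separate proof, stating only that the corollary ``follows quickly from Lemmas \ref{mnlower} and \ref{onedistinctcolorlem},'' and your counting (at most $2$ colors on $D_2$ plus at most one new color on each of the $n$ off-diagonals, so a rainbow-free exact coloring has at most $m+n$ colors) is precisely that deduction, combined with the lower bound from Lemma \ref{mnlower}. Your remark on the $n=2$ boundary case is a harmless extra check and does not change the argument.
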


\begin{cor}\label{mainlessthan3}
If $c$ is an exact, rainbow-free $(m+n+1)$-coloring of $[m]\times[n]$ for $eq$ with $3\le m \le n$, then $|c(D_m)| \geq 3$.
\end{cor}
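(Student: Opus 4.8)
The plan is a direct counting argument built on Lemma \ref{onedistinctcolorlem}. First I would recall that $[m]\times[n]$ has exactly $m+n-1$ diagonals, that these diagonals partition $[m]\times[n]$, and that one of them is the main diagonal $D_m$, leaving $m+n-2$ off-diagonals. Since $c$ is an exact $(m+n+1)$-coloring, every one of the $m+n+1$ colors is used, and because the diagonals cover $[m]\times[n]$ we may write $c([m]\times[n]) = c(D_m) \cup \bigcup_{x \ne m} c(D_x)$.

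Next I would bound the size of the right-hand side. For each off-diagonal $D_x$ (that is, $x \ne m$), Lemma \ref{onedistinctcolorlem} gives $\big|c(D_x)\setminus c(D_m)\big| \le 1$. Hence the set of colors that appear on some off-diagonal but not on $D_m$ has size at most the number of off-diagonals, namely $m+n-2$. Combining, $m+n+1 = |c([m]\times[n])| \le |c(D_m)| + (m+n-2)$, and rearranging yields $|c(D_m)| \ge 3$, as claimed.

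There is essentially no obstacle here: the whole argument is a single inclusion count, so the only points requiring care are the bookkeeping (there are $m+n-1$ diagonals and therefore $m+n-2$ off-diagonals) and the explicit use of exactness to equate $|c([m]\times[n])|$ with $m+n+1$. The hypotheses $3 \le m \le n$ play no role in the counting beyond guaranteeing that $eq$ has non-degenerate solutions (so that Lemma \ref{onedistinctcolorlem} has content and the corollary is not vacuous); they are stated here merely to match the regime in which the corollary will later be applied.
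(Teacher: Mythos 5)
Your argument is correct and is essentially identical to the paper's proof: both apply Lemma \ref{onedistinctcolorlem} to each of the $m+n-2$ off-diagonals to bound $\big|c([m]\times[n])\setminus c(D_m)\big|$ by $m+n-2$, then use exactness of the $(m+n+1)$-coloring to conclude $|c(D_m)|\geq 3$. No issues to flag.
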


\bpf
Lemma \ref{onedistinctcolorlem} implies $\big|c(D_x)\setminus c(D_m)\big|\leq 1$ for all $x \neq m$. So, $\big|c\big([m]\times[n]\big)\setminus c(D_m)\big|\leq m+n-2$. Thus, $|c(D_m)|\geq 3$. 
\epf

Let $c$ be a coloring of $\mn$.  Diagonal $D_j$ \emph{contributes color $x$} if $x\in c(D_j)\backslash c(D_m)$ and $x\notin c(D_i)$ for all $i < j$.  Otherwise, $D_j$ does not contribute color $x$.  In general, if $D_j$ contributes any color it is a \emph{contributing diagonal} and if it does not contribute any color it is a \emph{non-contributing diagonal}.

Lemma \ref{lem:ellminusthree} and the proceeding corollary establish results related to contributing off-diagonals. In particular, they provide an upper bound on the number of non-contributing off-diagonals and, subsequently, a lower bound on the number of contributing off-diagonals. This furthers the restrictions that applying an exact, rainbow-free $(m+n+1)$-coloring has on $[m]\times[n]$ when the main diagonal has exactly three colors, as in Lemma \ref{lem:eachdiagcontributes}, when the main diagonal has exactly four or more colors, as in Theorem \ref{thm:nojumpsever}, and more generally in Lemma \ref{lem:consecutivecontributing}.

\begin{lem}\label{lem:ellminusthree}
If $c$ is an exact, rainbow-free $(m+n+1)$-coloring of $\mn$ with $3\le m\le n$, then there are at most $|c(D_m)| - 3$ off-diagonals that do not contribute a color. 
\end{lem}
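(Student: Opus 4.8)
The plan is a double-counting argument on the off-diagonals. Write $\ell = |c(D_m)|$ and recall that $\mn$ has $m+n-1$ diagonals, exactly one of which is the main diagonal, so there are precisely $m+n-2$ off-diagonals. I would split these off-diagonals into the contributing ones and the non-contributing ones, show that the number of contributing off-diagonals is exactly $(m+n+1)-\ell$, and then subtract to conclude that the number of non-contributing off-diagonals is $\ell-3$, which is in particular at most $\ell-3$.

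To count the contributing off-diagonals, I would exhibit a bijection between them and the colors of $c(\mn)\sm c(D_m)$. For one direction, every color $x\in c(\mn)\sm c(D_m)$ is contributed by some off-diagonal: since $x$ appears somewhere in $\mn$ but not in $D_m$, letting $j$ be minimal with $x\in c(D_j)$ we have $j\neq m$, and $D_j$ contributes $x$ while no other diagonal does (a later diagonal fails the minimality condition because $x\in c(D_j)$, and an earlier diagonal does not contain $x$). For the other direction, Lemma~\ref{onedistinctcolorlem} gives $\big|c(D_x)\sm c(D_m)\big|\le 1$ for every off-diagonal $D_x$, so each off-diagonal can contribute at most one color; hence a contributing off-diagonal contributes exactly one color of $c(\mn)\sm c(D_m)$. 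Thus the set of contributing off-diagonals and the set $c(\mn)\sm c(D_m)$ have the same cardinality, namely $|c(\mn)|-\ell=(m+n+1)-\ell$.

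Finally I would subtract: the number of non-contributing off-diagonals equals $(m+n-2)-\big((m+n+1)-\ell\big)=\ell-3$. I would also note that this is legitimate — in particular that the number of contributing off-diagonals does not exceed the total number of off-diagonals — because Corollary~\ref{mainlessthan3} yields $\ell\ge 3$, so $\ell-3\ge 0$. I do not expect a serious obstacle: the argument is bookkeeping, and the only points that need care are verifying that every color outside $c(D_m)$ is genuinely contributed by an off-diagonal (so nothing is lost in the count), that each off-diagonal contributes at most one color (which is exactly Lemma~\ref{onedistinctcolorlem}), and keeping the off-diagonal count at $m+n-2$ rather than $m+n-1$.
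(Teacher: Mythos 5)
Your proposal is correct and rests on the same two facts as the paper's proof: each off-diagonal contains, hence contributes, at most one color outside $c(D_m)$ (Lemma~\ref{onedistinctcolorlem}), and each of the $(m+n+1)-|c(D_m)|$ colors outside $c(D_m)$ is contributed by exactly one off-diagonal, measured against the $m+n-2$ off-diagonals in total. The only difference is presentational: you run the count directly as a bijection and obtain exactly $|c(D_m)|-3$ non-contributing off-diagonals, whereas the paper phrases the same count as a pigeonhole argument by contradiction; both are valid.
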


\bpf Define $\ell = |c(D_m)|$ and note that there are $m+n-2$ off-diagonals and $m+n+1 - \ell$ colors that appear in the off-diagonals that do not appear in the main diagonal.  Let $k$ be the number of off-diagonals that do not contribute a color.  For the sake of contradiction, assume $k \ge \ell - 2$.  This implies $m+n - 2-k \le m+n -\ell$ off-diagonals must contribute a color.  Thus, by the pigeon hole principle, some off-diagonal contains two colors that do not appear in the main diagonal. Since $c$ was assumed to be rainbow-free, Lemma \ref{onedistinctcolorlem} is contradicted.   Therefore, $k\le \ell -3$.\epf

\begin{cor}\label{cor:contributingoffdiag} If $c$ is an exact, rainbow-free $(m+n+1)$-coloring of $\mn$ with $3\le m\le n$, then there are at least $m+n-1-\log_2(m/s_2)$ contributing off-diagonals.

\end{cor}

\bpf Define $\ell=|c(D_m)|$ and observe that Lemma \ref{lem:ellminusthree} indicates that there are at most $\ell-3$ non-contributing off-diagonals.  Since there are $m+n-2$ off-diagonals in total, and off-diagonals are either contributing or non-contributing, there are at least $(m+n-2) -(\ell - 3) = m+n-\ell +1$ contributing off-diagonals.  Since Lemma \ref{lem:s2} implies $\ell \le \log_2(m/s_2)+2$, there are at least $m+n - \log_2(m/s_2) -1$ contributing off-diagonals.\epf


Let $\alpha = (a_1,a_2)\in D_a$ and $\beta =(b_1,b_2)\in D_b$ with $a\neq b$. If $a_1<b_1$ and $a_2<b_2$, there is a \emph{jump from $\alpha$ to $\beta$} and the \emph{jump distance} is defined to be $(b_1-a_1)+(b_2-a_2)$. 
Alternatively, there is a jump from $\alpha$ to $\beta$ when there exists some $\delta\in \mn$ such that $\alpha+ \delta = \beta$, where $\delta =(d_1,d_2)$ is called \emph{the jump} from $\alpha$ to $\beta$ and has jump distance $d_1 + d_2$.

\begin{ex}\label{ex1} In Figure \ref{fig:jump def}, $\alpha = (2,7)$, $\beta = (4,11)$, $\gamma = (5,2)$ and $\tau = (7,3)$.  The jump from $\alpha$ to $\beta$ is $\delta = (2,4)$ and has jump distance $6$.  The jump from $\gamma$ to $\tau$ is $(2,1)$ and has jump distance $3$.

\end{ex}

When there is a jump from $\alpha = (a_1,a_2)$ to $\beta = (b_1,b_2)$ with $\alpha \in D_a$ and $\beta \in D_b$, certain diagonals exhibit the special property that all elements within the diagonals make additional jumps with $\alpha$ or $\beta$. This set  of diagonals is 
\begin{equation}\label{eqS}S := \{D_x ~|~ m+a_2-b_1<x<m+b_2-a_1 \text{ and } x\notin\{a,b,m\}\}.\end{equation}

To visualize this, a rectangle is drawn using $\alpha$ and $\beta$ as corners. The diagonals with indices between $m+a_2-b_1$ and $m+b_2-a_1$ are those that intersect this rectangle not including the lower left and upper right corners. To finalize $S$, simply remove the diagonals containing $\alpha$ and $\beta$ and the main diagonal if applicable.

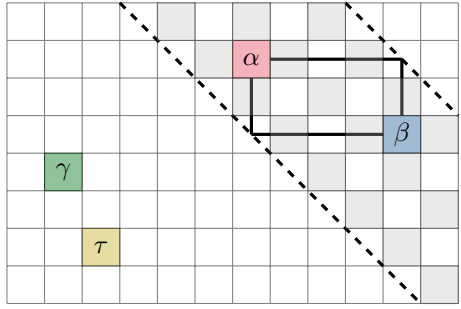
\begin{figure}[H]
    \centering
    
    \begin{tikzpicture} 
        \draw[step=.5cm,gray,thin] (0,0) grid (6,4);
        
        \filldraw[fill=cbred, draw=black, opacity = .5] (3,3) rectangle (3.5,3.5);
        \node at (3.25,3.25) {$\alpha$};
        \filldraw[fill=cbblue, draw=black, opacity = .5]  (5,2) rectangle (5.5,2.5);
        \node at (5.25,2.25) {$\beta$};
        
        \filldraw[fill=cbgreen, draw=black, opacity = .5] (.5,1.5) rectangle (1,2);
        \node at (.75,1.75) {$\gamma$};
        \filldraw[fill=cbyellow, draw=black, opacity = .5] (1,.5) rectangle (1.5,1);
        \node at (1.25,.75) {$\tau$};
        
        \draw[very thick] (3.25,3) -- (3.25,2.25);
        \draw[very thick] (3.25,2.25) -- (5,2.25);
        \draw[very thick] (3.5,3.25) -- (5.25,3.25);
        \draw[very thick] (5.25,3.25) -- (5.25,2.5);
        \draw[very thick, dashed] (1.5,4) -- (5.5,0);
        \draw[very thick, dashed] (4.5,4) -- (6,2.5);
        
        \foreach \n in {0,...,7}
            {
                \filldraw[fill=cbgray, draw = black, opacity = .3] (2+.5*\n,4-.5*\n) rectangle (2.5+.5*\n,3.5-.5*\n);
            }
        \foreach \n in {0,...,5}
            {
                \filldraw[fill=cbgray, draw = black, opacity = .3] (3+.5*\n,4-.5*\n) rectangle (3.5+.5*\n,3.5-.5*\n);
            }
        \foreach \n in {0,...,3}
            {
                \filldraw[fill=cbgray, draw = black, opacity = .3] (4+.5*\n,4-.5*\n) rectangle (4.5+.5*\n,3.5-.5*\n);
            }
    \end{tikzpicture}
    
    \caption{Jumps from $\alpha$ to $\beta$ and from $\gamma$ to $\tau$ from Example \ref{ex1}.  Also, a visualization of the set of diagonals $S$ (in gray) with respect to $\alpha$ and $\beta$.}
    \label{fig:jump def}
\end{figure}

The next two lemmas will show that if $\gamma$ is in a diagonal of $S$, then either $\alpha$ makes a jump to $\gamma$ or $\gamma$ makes a jump to $\beta$. Lemma \ref{lem:inside S} will show this for all diagonals of $S$ between $D_a$ and $D_b$, and Lemma \ref{lem:outside S} will consider the remainder of the diagonals in $S$.

\begin{lem}\label{lem:inside S}Suppose there is a jump from $\alpha=(a_1,a_2) \in D_a$ to $\beta=(b_1,b_2) \in D_b$.  If $\gamma = (g_1,g_2) \in D_g$ such that $a<g<b$ or $b<g<a$, then either $\alpha$ makes a jump to $\gamma$ or $\gamma$ makes a jump to $\beta$.
\end{lem}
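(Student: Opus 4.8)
The plan is to work directly with coordinates. Recall that a jump from $\alpha=(a_1,a_2)$ to $\beta=(b_1,b_2)$ means $a_1<b_1$ and $a_2<b_2$ (equivalently, $\alpha+\delta=\beta$ for some $\delta\in\mn$), and that $D_g$ consists of points $(i,j)$ with $i-j=m-g$. Since $\gamma\in D_g$ with $a<g<b$ (the case $b<g<a$ being symmetric), I would first translate the diagonal hypothesis into inequalities on the coordinates of $\gamma$. From $D_a$ being ``below/left of'' $D_b$ and $D_g$ strictly between them, together with $\alpha\in D_a$ and $\beta\in D_b$, one extracts that $g_1>a_1$ or $g_1<b_1$ need not both hold a priori, so the real content is a dichotomy.

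The key step is the following case split on, say, the first coordinate of $\gamma$ relative to those of $\alpha$ and $\beta$ (one could equally split on the second coordinate). If $g_1>a_1$, I would argue that also $g_2>a_2$: indeed $g_1-g_2=m-g<m-a=a_1-a_2$, so $g_2-a_2>g_1-a_1>0$, giving $g_2>a_2$; hence $\alpha$ makes a jump to $\gamma$ (and one should also check $\gamma\in\mn$, which is automatic since $\gamma\in D_g\subseteq\mn$). If instead $g_1\le a_1$, then since $g>a$ forces $g_1-g_2<a_1-a_2$, we get $g_2>a_2\ge g_1+(a_2-a_1)\cdots$ — more to the point, from $g<b$ we have $g_1-g_2>b_1-b_2$, i.e. $b_2-g_2>b_1-g_1$; combined with $g_1\le a_1<b_1$ this yields $g_1<b_1$ and $g_2<b_2$, so $\gamma$ makes a jump to $\beta$. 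The two cases are exhaustive, so one of the two jumps always occurs.

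The main obstacle I anticipate is purely bookkeeping: making sure the inequalities $a<g<b$ are correctly converted into the coordinate inequalities $a_1-a_2<g_1-g_2<b_1-b_2$ with the right orientation (given the paper's convention $D_k=\{(i,j):m-k=i-j\}$, larger index means smaller $i-j$), and then verifying that in each branch the two needed strict inequalities on coordinates both follow. There is no deep idea here beyond the observation that a point on a strictly-between diagonal cannot simultaneously be ``not above-right of $\alpha$'' and ``not below-left of $\beta$'' — this is exactly what the diagonal sandwiching rules out. I would present the argument as: fix the orientation once, do the case $g_1>a_1$, do the case $g_1\le a_1$, and remark that the case $b<g<a$ follows by swapping the roles of $\alpha$ and $\beta$ (reversing the jump). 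I do not expect to need Lemma~\ref{onedistinctcolorlem} or any coloring hypothesis — this lemma is purely geometric.
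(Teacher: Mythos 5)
Your argument is correct and is essentially the paper's own proof: translate $a<g<b$ into $b_1-b_2<g_1-g_2<a_1-a_2$ and split on the first coordinate of $\gamma$ (you split at $a_1$, the paper at $b_1$ — immaterial), then handle $b<g<a$ symmetrically. The only slip is in your closing remark, where the summary inequality ``$a_1-a_2<g_1-g_2<b_1-b_2$'' has the orientation reversed for the case $a<g<b$; the inequalities you actually use in the case analysis are the correct ones, so nothing breaks.
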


\begin{proof} Since there is a jump from $\alpha$ to $\beta$, it follows that $a_1 < b_1$ and $a_2 < b_2$.

    If $a<g<b$, then $m - a_1 + a_2 < m - g_1 + g_2 < m - b_1 + b_2$, so \[a_2 - a_1 < g_2 - g_1 < b_2 - b_1.\]
        
        Suppose $g_1 < b_1$. Then $g_2 - g_1 < b_2 - b_1 < b_2 - g_1$ which implies $g_2 < b_2$. Thus, $\gamma$ makes a jump to $\beta$.
        
        On the other hand, suppose $b_1 \leq g_1$.  In this case $a_1 < b_1 \leq g_1$, so $a_2 - g_1 < a_2 - a_1 < g_2 - g_1$. This yields $a_2 < g_2$. which means $\alpha$ makes a jump to $\gamma$.
        
        The case where $b<g<a$ leads to a similar argument and the desired conclusion.\end{proof}

\begin{lem}\label{lem:outside S}
    Suppose there is a jump from $\alpha=(a_1,a_2) \in D_a$ to $\beta=(b_1,b_2) \in D_b$ and define $\ell = \min\{b_1-a_1,b_2-a_2\}$. 
    \begin{enumerate}
        \item[\rm{a.)}] If $a<b$ and $\gamma \in D_g$ such that $a-\ell < g < a$ or $b < g < b+\ell$, then either there is a jump from $\alpha$ to $\gamma$ or there is a jump from $\gamma$ to $\beta$. 
        
        \item[\rm{b.)}] If $b<a$ and $\gamma \in D_g$ such that $b-\ell < g < b$ or $a < g < a+\ell$, then either there is a jump from $\alpha$ to $\gamma$ or there is a jump from $\gamma$ to $\beta$.
    \end{enumerate}
\end{lem}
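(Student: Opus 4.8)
The plan is to handle part (a) directly and then note that part (b) follows by the symmetric argument. Throughout, recall that a jump from $\alpha$ to $\beta$ forces $a_1 < b_1$ and $a_2 < b_2$, and that membership in a diagonal is determined by the difference of coordinates: $\gamma = (g_1, g_2) \in D_g$ means $g_2 - g_1 = g - m$. So the diagonal-index inequalities $a - \ell < g < a$ translate to $a_2 - a_1 - \ell < g_2 - g_1 < a_2 - a_1$, and $b < g < b+\ell$ translates to $b_2 - b_1 < g_2 - g_1 < b_2 - b_1 + \ell$. I would write $\ell = \min\{b_1 - a_1, b_2 - a_2\}$ and keep in mind that both $b_1 - a_1 \ge \ell$ and $b_2 - a_2 \ge \ell$.

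First I would treat the case $a - \ell < g < a$, so $g_2 - g_1 < a_2 - a_1$. The goal is to show either $\alpha$ jumps to $\gamma$ (i.e. $a_1 < g_1$ and $a_2 < g_2$) or $\gamma$ jumps to $\beta$ (i.e. $g_1 < b_1$ and $g_2 < b_2$). I would split on whether $g_1 > a_1$ or $g_1 \le a_1$, mirroring the bookkeeping in Lemma \ref{lem:inside S}. If $g_1 \le a_1$: then from $g_2 - g_1 < a_2 - a_1$ and $g_1 \le a_1$ we get $g_2 < a_2 + (g_1 - a_1) \le a_2 \le b_2$, and $g_1 \le a_1 < b_1$, so $\gamma$ jumps to $\beta$. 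If $g_1 > a_1$: I need $a_2 < g_2$. Here the bound $a - \ell < g$ is doing real work — from $g_2 - g_1 > a_2 - a_1 - \ell$ and $g_1 > a_1$ we get $g_2 > a_2 - \ell + (g_1 - a_1) > a_2 - \ell$, which is not immediately enough, so I would instead push harder: since $g_1 \le b_1$ is not guaranteed, I may need to also invoke $g_1 - a_1 < $ something. The cleanest route is probably: if additionally $g_1 < b_1$, bound $g_2$ from $g_2 - g_1 < a_2 - a_1 < b_2 - a_1$ giving $g_2 < b_2$ (so $\gamma$ jumps to $\beta$ provided also $g_1 < b_1$, which we assumed); and if $g_1 \ge b_1$, then $g_1 - a_1 \ge b_1 - a_1 \ge \ell$, so $g_2 - g_1 > a_2 - a_1 - \ell \ge a_2 - a_1 - (g_1 - a_1) = a_2 - g_1$, hence $g_2 > a_2$, and combined with $g_1 > a_1$ this gives a jump from $\alpha$ to $\gamma$. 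This dichotomy on $g_1 \lessgtr b_1$ nested inside $g_1 \lessgtr a_1$ should close the case.

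The case $b < g < b + \ell$, so $b_2 - b_1 < g_2 - g_1 < b_2 - b_1 + \ell$, is symmetric under reflecting the roles of $\alpha$ and $\beta$ (replace $\alpha$ by $-\beta$, $\beta$ by $-\alpha$ in spirit): I expect the same two-level case split on $g_1 \lessgtr b_1$ and $g_1 \lessgtr a_1$, using $g_1 \ge b_1 \Rightarrow g_2 > b_2$ hence $\beta$ jumps to $\gamma$ — wait, that's the wrong direction, so more carefully: when $g < b+\ell$ one shows $\alpha$ jumps to $\gamma$ or $\gamma$ jumps to $\beta$ by a symmetric computation, and I would just write ``by an argument symmetric to the previous case'' after checking one representative subcase. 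Part (b), where $b < a$, is the mirror image of part (a) across the main diagonal and follows by swapping the roles of $a$ and $b$ in the diagonal-index inequalities; I would state that explicitly rather than repeat the computation.

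The main obstacle is the subcase in the first case where $g_1$ sits strictly between $a_1$ and $b_1$ — there the inequality $a - \ell < g$ must be combined with $a_1 < g_1 < b_1$ to force $g_2 < b_2$, and getting the direction of every inequality right (the $\ell = \min$ is essential precisely here, since it is what guarantees $g_1 \ge b_1$ forces $g_1 - a_1 \ge \ell$) is the delicate bookkeeping step. Everything else is a routine translation between diagonal indices and coordinate differences.
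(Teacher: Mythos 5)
Your overall plan is essentially the same elementary coordinate bookkeeping as the paper's proof; the paper just organizes the case split on the second coordinate (whether $a_2<g_2$, resp.\ whether $g_2<b_2$), which gives two clean subcases per range, while you split on the first coordinate, which creates the extra middle strip $a_1<g_1<b_1$. Your subcases $g_1\le a_1$ and $g_1\ge b_1$ are correct, and your use of $\ell\le b_1-a_1$ in the latter is exactly the role $\ell$ plays in the paper's argument.

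However, the step you yourself single out as the delicate one is wrong as written: in the strip $a_1<g_1<b_1$, the chain $g_2-g_1<a_2-a_1<b_2-a_1$ does not yield $g_2<b_2$; from $g_2-g_1<b_2-a_1$ you only get $g_2<b_2+(g_1-a_1)$, and $g_1>a_1$ there. The claim is nevertheless true and easy to repair: either note that $a<b$ forces $b_1-b_2>a_1-a_2$, i.e.\ $\ell=b_1-a_1\le b_2-a_2$, so $g_2<g_1+(a_2-a_1)<b_1+(a_2-a_1)\le b_2$; or, more simply, observe that in this strip both $a_1<g_1$ and $g_1<b_1$ hold, so since $a_2<b_2$ either $a_2<g_2$ (jump from $\alpha$ to $\gamma$) or $g_2\le a_2<b_2$ (jump from $\gamma$ to $\beta$) --- the hypothesis $a-\ell<g$ is not needed in this strip at all; in your organization it is needed only when $g_1\ge b_1$ (in the paper's, only when $g_2\le a_2$). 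Finally, the range $b<g<b+\ell$ and part b.) do follow by the symmetric computations you gesture at (the paper writes the former out, splitting on $g_2<b_2$ versus $b_2\le g_2$ and using $\ell\le b_1-a_1$ to get $a_1<g_1$), but given your momentary wrong-direction worry you should write at least one of them out explicitly rather than leave it as ``symmetric.''
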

\begin{proof}
    Let $a<b$ and $\gamma = (g_1,g_2) \in D_g$ and first consider $a - \ell < g < a$.  If $a_2 < g_2$, then $a_2 - g_1 < g_2 - g_1 < a_2 - a_1$ implying $a_1 < g_1$ and $\alpha$ makes a jump to $\gamma$.  Alternatively, suppose $g_2 \leq a_2$. Since $a_2<b_2$, it follows that $g_2<b_2$. Note that $a-\ell<g<a$ implies $a_2-a_1-\ell < g_2-g_1 < a_2 - a_1$. This yields $g_1<a_1+\ell$. Since $\ell\leq b_1-a_1$, it follows that $g_1< b_1$. Thus, $\gamma$ makes a jump to $\beta$.
    
    Second, consider $b < g < b + \ell$. If $g_2 < b_2$, then $g_2 - b_1 < b_2 - b_1 < g_2 - g_1$ implying $g_1 < b_1$, and there is a jump from $\gamma$ to $\beta$.  Alternatively, suppose $b_2 \leq g_2$. Since $a_2<b_2$, it follows that $a_2<g_2$. Then $b<g<b+\ell$ implies $b_2 - b_1 < g_2-g_1 < b_2 - b_1 + \ell$. This yields $b_1-\ell < g_1$. Since $\ell\leq b_1-a_1$, it follows that $a_1 < g_1$. Thus, $\alpha$ makes a jump to $\gamma$.
    
    The result when $b<a$ follows from a similar argument.
\end{proof}

Lemma \ref{lem:diagsum} says that given the diagonal of two elements the diagonal of the sum and difference of the elements lands in.  For this reason, the authors refer to this as the `Landing Lemma.' This lemma becomes important quickly, as many succeeding proofs require a working knowledge of the locations of contributing diagonals, non-contributing diagonals, or elements of a jump.

\begin{lem}[Landing Lemma]\label{lem:diagsum}
 If $\alpha \in D_a$, $\beta \in D_b$, and $\alpha + \beta \in [m]\times[n]$, then $\alpha + \beta \in D_{a + b - m}$.  Similarly, if $\alpha-\beta \in \mn$ then $\alpha-\beta\in D_{a-b+m}.$
\end{lem}

\bpf
Let $\alpha = (a_1, a_2)$ and $\beta = (b_1, b_2)$ and suppose $\alpha+\beta\in D_k$ for some appropriately defined $k$. Then, $m-a=a_1-a_2$ and $m-b=b_1-b_2$. Moreover, $m-k = a_1+b_1-(a_2+b_2)$ and thus 
\[ k = m-(a_1+b_1)+a_2+b_2 = a+b-m. \] A similar analysis gives that $\alpha -\beta \in D_{a-b+m}$.
\epf
  
         Let $c$ be a coloring of $\mn$. Recall, $s_2 = \min\{x ~|~ c((x,x)) \neq c((1,1))\}$. Define $W_1 = \{(x,y) \in \mn ~|~ (x,y) + (s_2, s_2) \in \mn \}$, $W_2 = \{(x,y) \in \mn ~|~ (x,y) - (s_2, s_2) \in \mn\}$, and $W = W_1 \cup W_2$.  See Figure \ref{fig: W-Blocks} for a visualization of these, and the following, definitions.
         
         The next definitions will not be used until Section \ref{sec:L} but are included here as they are closely related to $W$, $W_1$ and $W_2$. Define  $Y_1 = \{(x,y) \in \mn : m < x +s_2 \text{ and } y-s_2 < 0\}$, $Y_2 = \{(x,y) \in \mn : x-s_2 < 0 \text{ and } m < y +s_2  \}$, and $Y = Y_1 \cup Y_2$. 
         
         Let $c$ be a coloring of $\mn$.  If $\alpha \in D_a, \beta \in D_{a+1}$ where $D_a$ and $D_{a+1}$ are contributing, and $c(\alpha),c(\beta)\notin c(D_m)$, then $\{\alpha, \beta\}$ are a \emph{consecutive contributing pair of elements}. If $\{\alpha,\beta\}$ is a consecutive contributing pair of elements with $\alpha = (a_1,a_2)$ and $\beta = (a_1, a_2+1)$, then $\{\alpha, \beta\}$ is a \emph{horizontal pair} and if $\alpha = (a_1,a_2)$ and $\beta = (a_1-1, a_2)$, then $\{\alpha, \beta\}$ is a \emph{vertical pair}.  Let $P_v$ be a vertical pair, $P_h$ be a horizontal pair, and define $P = P_v\cup P_h$. If $P_v \cap P_h = \emptyset$, $P_v\cap W \neq \emptyset$, and $P_h \cap W \neq \emptyset$, $P$ is called a \emph{contributing disjoint corner}.

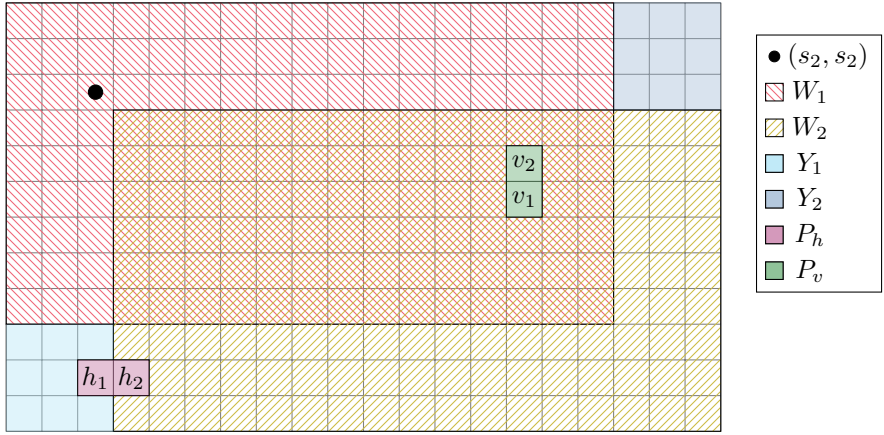
\begin{figure}[ht!]
    \centering
    
    \begin{tikzpicture}[scale = .95]
        \draw[step=.5cm,gray,thin] (0,0) grid (10,6);
        
        \filldraw[fill=white, draw=black, opacity = .2] (0,0) rectangle (1.5,1.5);
        \filldraw[fill=white, draw=black, opacity = .2] (8.5,4.5) rectangle (10,6);
        
        \filldraw[pattern = South east lines, pattern color = cbred] (0,6) rectangle (8.5,1.5);
        \filldraw[pattern = South west lines, pattern color = cbyellow] (1.5,4.5) rectangle (10,0);
        
        \filldraw[color=black] (1.25,4.75) circle (.1);
        
        \filldraw[fill=cbcyan, draw=black, opacity = .2] (0,0) rectangle (1.5,1.5);
        \filldraw[fill=cbblue, draw=black, opacity = .2] (8.5,4.5) rectangle (10,6);
        
        \draw[fill=white] (1,.5) rectangle (2,1);
        \filldraw[draw = black, fill=cbpurple, opacity = .3] (1,.5) rectangle (1.5,1);
        \filldraw[draw = black, fill=cbpurple, opacity = .3] (1.5,.5) rectangle (2,1);
        \node at (1.25,.75) {$h_1$};
        \node at (1.75,.75) {$h_2$};
        
        \draw[fill=white] (7,3) rectangle (7.5,4);
        \filldraw[draw = black, fill=cbgreen, opacity = .3] (7,3) rectangle (7.5,3.5);
        \filldraw[draw = black, fill=cbgreen, opacity = .3] (7,3.5) rectangle (7.5,4);
        
        \node at (7.25,3.25) {$v_1$};
        \node at (7.25,3.75) {$v_2$};
        
        \draw (10.5,1.95) rectangle (12.25,5.55);

        \draw[fill=black] (10.75,5.25) circle (.075);
        \node at (11.5,5.25) {$(s_2,s_2)$}; 
        \node[draw, pattern = South east lines, pattern color = cbred] at (10.75,4.75) {};
        \node at (11.25,4.75) {$W_1$};
        \node[draw, pattern = South west lines, pattern color = cbyellow] at (10.75,4.25) {};
        \node at (11.25,4.25) {$W_2$};
        \node[draw, fill = cbcyan, opacity = .4] at (10.75,3.75) {};
        \node[draw] at (10.75,3.75) {};
        \node at (11.25,3.75) {$Y_1$};
        \node[draw, fill = cbblue, opacity = .4] at (10.75,3.25) {};
        \node[draw] at (10.75,3.25) {};
        \node at (11.25,3.25) {$Y_2$};
        \node[draw, fill = cbpurple, opacity = .5] at (10.75,2.75) {};
        \node[draw] at (10.75,2.75) {};
        \node at (11.25,2.75) {$P_h$};
        \node[draw, fill = cbgreen, opacity = .5] at (10.75,2.25) {};
        \node[draw] at (10.75,2.25) {};
        \node at (11.25,2.25) {$P_v$};
        
    \end{tikzpicture}
    
    \caption{An example of $\mn$ when $(s_2, s_2) = (3,3)$ with corresponding $Y_1$, $Y_2$, $W_1$ and $W_2$ highlighted. Further, a contributing disjoint corner, $P_h \cup P_v$, with $c(h_1),c(h_2),c(v_1)$ and $c(v_2)$ pairwise distinct, is shown.
    }
    
    \label{fig: W-Blocks}
\end{figure}

        Lemma \ref{lem: hor-vert-pairs} says that a contributing disjoint corner will force a rainbow solution in $\mn$. The proof of Theorem \ref{thm:main3colors} uses Lemma \ref{lem: hor-vert-pairs} to arrive at a contradiction once a contributing disjoint corner is found.
        
\begin{lem}\label{lem: hor-vert-pairs}
   If $c$ is a rainbow-free coloring of $\mn$ for $eq$ with $m \le n$, then there are no contributing disjoint corners $P = P_v \cup P_h$ in $\mn$.
\end{lem}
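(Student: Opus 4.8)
The plan is to assume, for contradiction, that $P = P_v \cup P_h$ is a contributing disjoint corner, and to produce an explicit rainbow solution to $eq$. Write $P_h = \{h_1, h_2\}$ with $h_1 = (a_1, a_2)$, $h_2 = (a_1, a_2+1)$, and $P_v = \{v_1, v_2\}$ with $v_1 = (b_1, b_2)$, $v_2 = (b_1 - 1, b_2)$. By hypothesis $c(h_1), c(h_2), c(v_1), c(v_2)$ are pairwise distinct (all four lie outside $c(D_m)$ and the two pairs sit on distinct contributing diagonals, so within each pair the colors differ, and a vertical and a horizontal element of disjoint contributing diagonals also differ — this is exactly what $P_v \cap P_h = \emptyset$ together with the contributing hypothesis buys us). The strategy is: since $P_h \cap W \neq \emptyset$ and $P_v \cap W \neq \emptyset$, one element of each pair can be shifted by $(s_2, s_2)$ (into or out of $\mn$) using the Landing Lemma to hook a vertical pair element and a horizontal pair element together through an element of $D_m$, producing a three-element rainbow solution.

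First I would record the elementary consequences of $P_h \cap W \neq \emptyset$ and $P_v \cap W \neq \emptyset$: at least one of $h_1, h_2$ lies in $W_1 \cup W_2$, hence either it plus $(s_2,s_2)$ or it minus $(s_2,s_2)$ is a legal element of $\mn$; similarly for $P_v$. Then I would use the Landing Lemma (Lemma \ref{lem:diagsum}) to see where these shifts land. A key point is that $(s_2, s_2) \in D_m$ and $c((s_2,s_2)) \ne c((1,1))$, and more generally the diagonal $D_m$ is where shifting by $(s_2,s_2)$ takes you relative to the original diagonal. I would then combine a shifted horizontal-pair element with a shifted (or unshifted) vertical-pair element: the difference or sum of two elements whose diagonals are controlled will, by the Landing Lemma, land on a predictable diagonal, and I want to arrange that the third element of the solution lies in $D_m$. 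Since there are only $\ell = |c(D_m)|$ colors on $D_m$ and four pairwise-distinct non-$D_m$ colors among $h_1, h_2, v_1, v_2$, whichever three elements I assemble into a solution will automatically be rainbow provided the $D_m$-element (if one is used) carries a color distinct from the two I picked — and that is guaranteed because the two chosen elements have colors outside $c(D_m)$.

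The main obstacle is the bookkeeping of cases: which of $h_1, h_2$ lies in $W_1$ versus $W_2$ (or both), which of $v_1, v_2$ lies in which $W_i$, and whether the pairs sit with $D_a$ below or above $D_b$ relative to $D_m$. The point of the hypothesis $P_v \cap P_h = \emptyset$ is precisely to rule out the degenerate overlap that would make a candidate "solution" repeat an element; I would need to check in each case that the three elements produced are genuinely distinct (so the solution is non-degenerate) and all lie in $[m]\times[n]$. I expect to reduce to a small number of configurations — essentially "corner up-left" versus "corner down-right" — by the symmetry $(i,j) \mapsto (m+1-i, \text{reflection})$ is not available, but the symmetry exchanging the roles of $W_1$ and $W_2$ (i.e. adding versus subtracting $(s_2,s_2)$) does cut the casework roughly in half. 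In each surviving case I would exhibit $\alpha, \beta, \gamma$ explicitly with $\alpha + \beta = \gamma$, verify membership via the $W$-conditions, and invoke the Landing Lemma to confirm the diagonal of $\gamma$, then conclude that $\{\alpha,\beta,\gamma\}$ is rainbow, contradicting rainbow-freeness.
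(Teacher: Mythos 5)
Your framing (argue by contradiction, use the $W$-conditions to shift by $(s_2,s_2)$, track diagonals with the Landing Lemma) is pointed in the right direction, but the central mechanism you propose does not exist. You want to ``hook a vertical pair element and a horizontal pair element together through an element of $D_m$,'' i.e.\ form a single solution whose three members are (essentially) an element coming from $P_h$, an element coming from $P_v$, and an element of the main diagonal. For that you would need, say, $v - h \in D_m$ and $v-h \in \mn$; by the Landing Lemma $v-h$ lies in $D_m$ only when $h$ and $v$ sit on the \emph{same} diagonal, which nothing in the hypotheses provides, and $v-h$ need not lie in $\mn$ at all. The conditions $P_h\cap W\neq\emptyset$ and $P_v\cap W\neq\emptyset$ only guarantee that a shift by $(s_2,s_2)$ keeps one element of each pair inside the grid; they give no relation tying the two pairs together in one equation, and the positions of the two pairs are otherwise arbitrary. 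Relatedly, your claim that whatever triple you assemble ``will automatically be rainbow'' fails for the triples you actually can form: those involve shifted images of pair elements, whose colors are unknown a priori --- rainbow-freeness only \emph{constrains} them, it does not make them distinct.

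The paper's proof avoids constructing any explicit rainbow solution and instead over-constrains the color of the single auxiliary element $(s_2-1,s_2)$, using an ingredient your sketch never touches: the minimality of $s_2$, which gives $c((s_2-1,s_2-1))=1$ while $c((s_2,s_2))=2$. From $P_v$ (whether it meets $W_1$ or $W_2$), two solutions reaching a common target --- e.g.\ $(a_1-1,b_1)+(s_2,s_2)$ and $(a_1,b_1)+(s_2-1,s_2)$ --- force $c((s_2-1,s_2))\in\{2,c_3,c_4\}$; from $P_h$, the analogous solutions using $(s_2-1,s_2-1)$ force $c((s_2-1,s_2))\in\{1,c_5,c_6\}$. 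Since $1,2,c_3,c_4,c_5,c_6$ are pairwise distinct, the two sets are disjoint and no color is available for $(s_2-1,s_2)$, the desired contradiction. Without this idea (or an equivalent way of making the two pairs constrain a common element), the case analysis you outline will not close.
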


\begin{proof}
    If $|c(D_m)| = 1$, then $(s_2,s_2)$ does not exist and $W$ is not defined so the statement is vacuously true.  Therefore, let $2 \le |c(D_m)|$. 
    For the sake of contradiction, assume $P_v = \{(a_1, b_1), (a_1 - 1, b_1)\}$ and $P_h = \{(a_2, b_2), (a_2, b_2 + 1)\}$ form a contributing disjoint corner. Define $c_3 = c((a_1, b_1))$, $c_4 = c((a_1 - 1, b_1))$, $c_5 = c((a_2, b_2))$, and $c_6 = c((a_2, b_2 + 1))$, and recall that convention implies $c((1,1))=1$ and $c((s_2,s_2))=2$. Note that $1,2,c_3,c_4,c_5$ and $c_6$ are pairwise distinct.  It will be shown that every possible color of $(s_2-1,s_2)$ gives a rainbow solution, which is a contradiction.
    \par
    
    First, note that $P_v$ either intersects $W_1$ or $W_2$. Suppose $P_v \cap W_1 \neq \emptyset$. Then $(a_1-1,b_1) \in W_1$ implying that the elements of \[(a_1-1,b_1) + (s_2,s_2) = (a_1-1+s_2,b_1+s_2)\] are in $\mn$. Further, this equation implies that $c((a_1-1+s_2,b_1+s_2)) \in \{2,c_4\}$. So, the equation \[(a_1,b_1) + (s_2-1,s_2) = (a_1-1+s_2,b_1+s_2)\] implies that $c((s_2-1,s_2)) \in \{2,c_3,c_4\}$. Alternatively, if $P_v \cap W_2 \neq \emptyset$, then $(a_1,b_1) \in W_2$. Similar to the previous case, the equations \[(a_1-s_2,b_1-s_2) + (s_2,s_2) = (a_1,b_1)\]  and  \[(a_1-s_2,b_1-s_2) + (s_2-1,s_2) = (a_1-1,b_1)\] imply that $c((s_2-1,s_2)) \in \{2,c_3,c_4\}$.
    \par
    Second, note that $P_h$ either intersects $W_1$ or $W_2$. If $P_h \cap W_1 \neq \emptyset$, then $(a_2,b_2) \in W_1$.
    So, $(a_2+s_2,b_2+s_2) \in \mn$. Since $a_2 < a_2+s_2-1 < a_2+s_2$, it follows that $(a_2+s_2-1,b_2+s_2) \in \mn$. Thus, the equation \[(a_2,b_2+1) + (s_2-1,s_2-1) = (a_2+s_2-1,b_2+s_2)\] implies $c((a_2+s_2-1,b_2+s_2)) \in \{1,c_6\}$ because $c((s_2-1,s_2-1))=1$. Finally, the equation \[(a_2,b_2) + (s_2-1,s_2) = (a_2+s_2-1,b_2+s_2)\] implies $c((s_2-1,s_2)) \in \{1,c_5,c_6\}$. On the other hand, if $P_h \cap W_2 \neq \emptyset$, then $(a_2,b_2+1)\in W_2$ and the equations \[(a_2-s_2+1,b_2-s_2+1)+(s_2-1,s_2)=(a_2,b_2+1)\]and \[(a_2-s_2+1,b_2-s_2+1)+(s_2-1,s_2-1)=(a_2,b_2)\] imply that $c((s_2-1,s_2)) \in \{1,c_5,c_6\}$.
    \par
    Therefore, no matter where $P_v$ and $P_h$ are in $\mn$, $c((s_2-1,s_2)) \in \{2,c_3,c_4\} \cap \{1,c_5,c_6\}$, a contradiction.
\end{proof}


\section{Main diagonal has three colors}\label{sec:maindiagthree}

This section focuses on the situation where $\mn$ has an exact $(m+n+1)$-coloring and the main diagonal has exactly three colors.  
The overall strategy of the section is to analyze the structure of an arbitrary coloring, with the previously stated assumptions, and show that the structural restrictions must admit a rainbow solution to $eq$.

Lemma \ref{lem:eachdiagcontributes} is a direct consequence of Lemmas \ref{onedistinctcolorlem} and \ref{lem:ellminusthree}.

\begin{lem}\label{lem:eachdiagcontributes}
   If $c$ is an exact, rainbow-free $(m+n+1)$-coloring of $\mn$ for $eq$ with $3\le m \le n$ and $|c(D_m)| = 3$, then each off-diagonal $D_k$ contributes exactly one color $c_k$ such that $c_k \notin c(\mn \setminus D_k)$.
\end{lem}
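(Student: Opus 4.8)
The plan is to combine the global color count with the two structural lemmas already proved: the one-new-color bound of Lemma \ref{onedistinctcolorlem} and the non-contributing count of Lemma \ref{lem:ellminusthree}. First I would record the counting setup. Since $c$ is an exact $(m+n+1)$-coloring and $|c(D_m)| = 3$, exactly $m+n-2$ colors do not appear on the main diagonal, and every such color must appear on some off-diagonal. Because $\mn$ has exactly $m+n-1$ diagonals, it has exactly $m+n-2$ off-diagonals, so the off-diagonals must absorb all $m+n-2$ missing colors with no room to spare.

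Next I would apply Lemma \ref{lem:ellminusthree} with $\ell = |c(D_m)| = 3$: there are at most $\ell - 3 = 0$ off-diagonals that fail to contribute a color, so \emph{every} off-diagonal contributes at least one color. On the other hand, Lemma \ref{onedistinctcolorlem} gives $|c(D_k) \setminus c(D_m)| \le 1$ for every off-diagonal $D_k$, so $D_k$ can contribute at most one color. Hence every off-diagonal $D_k$ contributes exactly one color $c_k$, and moreover $c(D_k) \setminus c(D_m) = \{c_k\}$.

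It then remains to verify the stronger conclusion $c_k \notin c(\mn \setminus D_k)$, i.e. that $c_k$ occurs on no diagonal other than $D_k$. By the definition of ``contributes'' we already have $c_k \notin c(D_m)$ and $c_k \notin c(D_i)$ for all $i < k$. Suppose for contradiction that $c_k \in c(D_j)$ for some off-diagonal $D_j$ with $j > k$. Since $c_k \notin c(D_m)$, this forces $c_k \in c(D_j)\setminus c(D_m)$, and Lemma \ref{onedistinctcolorlem} then gives $c(D_j)\setminus c(D_m) = \{c_k\}$. Thus the only color $D_j$ could possibly contribute is $c_k$ itself; but $c_k \in c(D_k)$ with $k < j$, so in fact $D_j$ contributes no color at all, contradicting the conclusion of the previous paragraph. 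Therefore $c_k$ appears only on $D_k$, which is exactly the claim.

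I do not anticipate a real obstacle: the argument is essentially bookkeeping. The one point to be careful about is the asymmetry in the definition of ``contributes,'' which only excludes $c_k$ from \emph{strictly earlier} off-diagonals; so the substance of the proof is handling a later off-diagonal $D_j$ ($j > k$) that reuses $c_k$, and the key observation is that such a $D_j$ would be forced to be non-contributing, which the hypothesis $|c(D_m)| = 3$ rules out via Lemma \ref{lem:ellminusthree}.
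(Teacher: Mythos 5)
Your argument is correct and follows exactly the route the paper intends: the paper states this lemma as a direct consequence of Lemmas \ref{onedistinctcolorlem} and \ref{lem:ellminusthree}, and your write-up simply fills in the details, including the worthwhile extra step showing that a later off-diagonal reusing $c_k$ would be forced to be non-contributing. No gaps.
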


Lemma \ref{lem:3colornojumps} proves that there are no jumps between distinctly colored elements when there are three colors in the main diagonal. A brief outline of the argument follows.  First, it is shown that if such a jump exists between elements in $ D_a$ and $ D_b$, then there is a $k$ such that $a = m+k$, and $b = m+2k$. Then, it is shown that $|k| = 1$, so either $a = m+1$ and $b = m+2$ or $a= m-1$ and $b = m-2$. This ultimately forces $\alpha = (1,2)$ or $\alpha = (2,1)$, respectively.   Finally, two cases pertaining to the size of $s_3$ lead to contradictions. 

\begin{lem}\label{lem:3colornojumps}
    If $c$ is an exact, rainbow-free $(m+n+1)$-coloring of $\mn$ for $eq$ with $3\le m \le n$ and $|c(D_m)| = 3$, then there are no jumps from $\alpha$ to $\beta$ with $c(\alpha),c(\beta)\notin c(D_m)$ and $c(\alpha) \neq c(\beta)$.
\end{lem}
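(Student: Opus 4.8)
The outline in the paragraph preceding the statement already gives the skeleton; my plan is to fill in each stage with the tools available in the preliminaries.

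\textbf{Step 1: locating the diagonals of a jump.} Suppose for contradiction there is a jump from $\alpha\in D_a$ to $\beta\in D_b$ with $c(\alpha),c(\beta)\notin c(D_m)$ and $c(\alpha)\neq c(\beta)$. By the Landing Lemma (Lemma \ref{lem:diagsum}), the jump $\delta$ lands in $D_{b-a+m}$; write $k=b-a$, so $\delta\in D_{m+k}$ and $b=a+k$. I would first argue $a=m+k$ and $b=m+2k$. For this I would use Lemma \ref{lem:eachdiagcontributes}: every off-diagonal contributes exactly one color not in $c(D_m)$, and these contributed colors are globally unique. Since $c(\alpha)\notin c(D_m)$, $D_a$ must be the diagonal that contributes $c(\alpha)$, and likewise $D_b$ contributes $c(\beta)$; moreover $c(\delta)$ is either in $c(D_m)$ or is the unique color contributed by $D_{m+k}$. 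Now I examine the solutions forced by $\alpha$, $\beta$, $\delta$ together with elements of the three-colored main diagonal. Because $|c(D_m)|=3$, the main diagonal contains the three colors $\{1,2,3\}$ with representatives at $(1,1),(s_2,s_2),(s_3,s_3)$, and the Landing Lemma says $(s_i,s_i)+\text{(anything in }D_x)$ stays in $D_x$. The key pressure point: if $D_a$ and $D_b$ were two \emph{different} off-diagonals neither at distance $|k|$ from $D_m$ in the required way, one can combine the jump with a suitable main-diagonal element to produce an element whose color is forced to be new, contradicting the uniqueness in Lemma \ref{lem:eachdiagcontributes}, or directly producing a rainbow solution $\{\alpha,\delta,\beta\}$-style triple with a fourth distinctly-colored element. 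I expect the clean statement to be: $D_a$, $D_b$, $D_{m+k}$ are forced to be the diagonals $D_{m+k}, D_{m+2k}, D_{m+k}$ respectively — i.e. $\delta$ itself lies in $D_a$, so $a=m+k$ and $b=m+2k$.

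\textbf{Step 2: pinning $|k|=1$ and the element $\alpha$.} With $a=m+k$, $b=m+2k$, the element $\alpha=(a_1,a_2)$ satisfies $a_1-a_2=m-a=-k$, and $\delta=\alpha$ forces $\alpha+\alpha=\beta\in\mn$, so $\beta=(2a_1,2a_2)$ with $2a_1\le m$, $2a_2\le n$. If $|k|\ge 2$ I would derive a contradiction by exhibiting another jump or another forced color: e.g. if $k\ge 2$, then $a_2=a_1+k\ge a_1+2$, and elements like $(a_1,a_1+1)\in D_{m-1}$ or the intermediate diagonals in the set $S$ of (\ref{eqS}) must all make jumps with $\alpha$ or $\beta$ by Lemmas \ref{lem:inside S} and \ref{lem:outside S}; counting the colors these contribute against $m+n+1$ (via Lemma \ref{lem:ellminusthree} / Corollary \ref{cor:contributingoffdiag}) overflows, or one simply finds a rainbow triple. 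Forcing $|k|=1$ then gives $a_1=1$: indeed $a_1-a_2=\mp 1$ with $2a_1\le m$, and if $a_1\ge 2$ there is room for a parallel jump from $(a_1-1,a_2-1)$ or similar that again violates uniqueness. So $\alpha=(1,2)$ (when $k=-1$, $a=m-1$, $b=m-2$) or $\alpha=(2,1)$ (when $k=1$, $a=m+1$, $b=m+2$), and correspondingly $\beta=(2,4)$ or $\beta=(4,2)$.

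\textbf{Step 3: the $s_3$ case analysis.} Now $c(\alpha)=c_{a}$ and $c(\beta)=c_{b}$ are the unique colors of two specific near-main off-diagonals, and $c(\alpha)\neq c(\beta)$. Treating the case $\alpha=(2,1)$, $\beta=(4,2)$ (the other is symmetric by transposing coordinates, noting $m\le n$ is not needed here since we only use $3\le m$): consider the position of $s_3$ on the main diagonal relative to $2$ and $4$. If $s_3\le 4$, i.e. $s_3\in\{3,4\}$ since $s_3>s_2\ge 2$, then $(s_3,s_3)$ together with $\alpha=(2,1)$ or with $\beta=(4,2)$ and appropriate partners creates solutions mixing colors $1,2,3,c_a,c_b$; e.g. $(2,1)+(2,1)=(4,2)$ is the jump itself but $(s_3,s_3)$ is a distinctly colored main-diagonal element and $(2,1)+(s_3-2,s_3-1)$-type sums land in controlled diagonals whose colors are pinned by Lemma \ref{lem:eachdiagcontributes}, yielding a rainbow solution. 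If $s_3\ge 5$, then Lemma \ref{lem:s2} ($|c(D_m)|\le \log_2(m/s_2)+2$ with $|c(D_m)|=3$ forces $s_2\le m/2$, and Corollary \ref{cor:2power} forces $s_2\cdot 2\le m$, $s_3\le m$) combined with the largeness of $s_3$ leaves the many low-index off-diagonals $D_{m-1},D_{m-2},D_{m+1},D_{m+2},\dots$ each needing to contribute a distinct new color while $\alpha,\beta$ and their forced jump-partners (Lemmas \ref{lem:inside S}, \ref{lem:outside S}) occupy those diagonals with repeated or main-diagonal colors, again contradicting Lemma \ref{lem:eachdiagcontributes} by a counting argument, or producing a rainbow triple directly.

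\textbf{Main obstacle.} The hard part is Step 1 — rigorously forcing $a=m+k$, $b=m+2k$ (equivalently $\delta=\alpha$) rather than some other configuration. This is where one must carefully combine the jump with main-diagonal elements of all three colors and invoke the global uniqueness of contributed colors from Lemma \ref{lem:eachdiagcontributes}; the bookkeeping of which diagonal receives which color, and ruling out every alternative alignment, is the delicate part. Steps 2 and 3 are then largely a finite check leveraging Corollary \ref{cor:2power}, Lemma \ref{lem:s2}, and the jump-propagation Lemmas \ref{lem:inside S} and \ref{lem:outside S}.
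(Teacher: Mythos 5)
Your skeleton follows the outline printed before the lemma, but the places where you defer to ``counting overflows'' or ``one simply finds a rainbow triple'' are exactly where the real work lies, and one concrete step is wrong. From $t=a$ (which, contrary to your closing remark, is the \emph{easy} part: $c(\delta)\in\{c(\alpha),c(\beta)\}$ forces $\delta$ into $D_a$ or $D_b$ by Lemma \ref{lem:eachdiagcontributes}, and $t=b$ would give $a=m$ by Lemma \ref{lem:diagsum}) you conclude $\delta=\alpha$, hence $\beta=(2a_1,2a_2)$ and, after $|k|=1$, $\beta=(2,4)$ or $(4,2)$. That does not follow: $t=a$ only says $\delta$ lies in the same diagonal as $\alpha$ and $c(\delta)=c(\alpha)$, not that $\delta=\alpha$. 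Indeed the paper's conclusion at that stage is only ``$\alpha=(1,2)$ or $\delta=(1,2)$,'' and $\beta$ is not pinned at all in the sub-case $\frac{m}{2}+1\le s_3$; even in the sub-case $s_3<\frac{m}{2}+1$ it takes an argument (that $D_{m+3}$ can only contribute $c((1,4))$, then ruling out $b_2\ge 5$ and $b_2=3$) to force $\beta=(2,4)$. Your Step 3 case split $s_3\le 4$ versus $s_3\ge 5$ is not the paper's split ($s_3\ge\frac m2+1$ versus $s_3<\frac m2+1$) and you give no actual construction in either branch.

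The remaining gaps are of the same kind: for $|k|=1$ the paper does not count colors globally but takes a contributing $\gamma\in D_g$ with $g$ strictly between $a$ and $b$, applies Lemma \ref{lem:inside S} to get $\delta'\in D_{t'}$ with $t'\in\{m+g-a,\,m+b-g\}$, and checks arithmetically that $t'\notin\{a,b,m,g\}$, contradicting Lemma \ref{lem:eachdiagcontributes}; for forcing a first coordinate equal to $1$ it runs a two-case analysis on the contributing element of $D_{m+3}$ (your ``parallel jump from $(a_1-1,a_2-1)$'' uses elements whose colors you have no control over). Finally, the $a=m-1$, $b=m-2$ case is not a literal transposition of coordinates (that would land you in $[n]\times[m]$); the paper reflects the argument and the $s_3$ threshold changes to $\frac{m+1}{2}+1$. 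So while your high-level route coincides with the paper's, the proposal as written has a false intermediate claim and leaves the three substantive steps unproved.
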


\begin{proof}
    Suppose there is a jump from $\alpha \in D_a$ to $\beta \in D_b$ with $c(\alpha),c(\beta) \notin c(D_m)$ and $c(\alpha) \neq c(\beta)$. There must exist some $\delta \in D_t$ with $\alpha + \delta = \beta$, and $c(\delta) \in \{c(\alpha),c(\beta)\}$. If $t \notin \{a,b,m\}$, then Lemma \ref{lem:eachdiagcontributes} is contradicted.  Further, $t\neq m$ because $c(\delta) \notin c(D_m)$, and $t \neq b$ because $\alpha \notin D_m$. Thus, $t = a$, $c(\delta) = c(\alpha)$, and Lemma \ref{lem:diagsum} implies $b = 2a-m$. 
    
    Define $k = a-m$ so that $a = m+k$ and $b = m+2k$. For the sake of contradiction, assume $1<|k|$. Then there exist some off-diagonal $D_g$ with $g$ strictly between $a$ and $b$. By Lemma \ref{lem:eachdiagcontributes}, $D_g$ must contribute a color, say $D_g$ contributes $c(\gamma)$ for some $\gamma\in D_g$. Lemma \ref{lem:inside S} implies that there must be either a jump from $\alpha$ to $\gamma$ or a jump from $\gamma$ to $\beta$. In either case, there exists some $\delta' \in D_{t'}$ such that $\alpha + \delta' = \gamma$ or $\gamma + \delta' = \beta$, and Lemma \ref{lem:diagsum} implies $t' \in \{m+g-a,m+b-g\}$.  
    For the sake of contradiction, assume $t' \in \{a,b,m,g\}$ implying $t' \in \{a,b,m,g\} \cap \{m+g-a,m+b-g\}$. Since $g$ is strictly between $a$ and $b$, $g=m+k+j$ for some $j$ strictly between $0$ and $k$. Converting all elements in the two sets to be in terms of $m$, $k$ and $j$ gives
    \[
        t' \in \{m+k,m+2k,m,m+k+j\}\cap \{m+j,m+k-j\}. 
    \]
      Note that the restrictions on $j$, and the fact that $1<|k|$, imply $m+j$ and $m+k-j$ are strictly between $m$ and $m+k$.  Thus, the intersection is empty and $t'\notin\{a,b,m,g\}$. Since $c$ is rainbow-free, $c(\delta') \in \{c(\alpha),c(\beta),c(\gamma)\}$.  This contradicts Lemma \ref{lem:eachdiagcontributes}.
    
    This means $|k| = 1$, which implies that $a = m+1$ and $b = m+2$ or $a = m-1$ and $b = m-2$. First, consider the case where $a = m+1$ and $b = m+2$. Assume for the sake of contradiction that $\alpha = (a_1,a_2), \beta = (b_1,b_2), \delta = (t_1,t_2)$ are defined such that $2 \leq a_1,t_1$. Since $2 \leq a_1,t_1$, it must be the case that $4 \leq b_1$. Additionally, note that $D_{m+3}$ contributes $c(\gamma')$ for some $\gamma' = (g_1,g_2)\in D_{m+3}$ by Lemma \ref{lem:eachdiagcontributes}. Since $\alpha \in D_{m+1}$, $\beta \in D_{m+2}$, $\delta \in D_{m+1}$, $\text{ and } \gamma' \in D_{m+3}$, the equations:
    
    \begin{gather}
        \label{eqa2}a_2 = a_1 + 1\\
        \label{eqt2}t_2 = t_1 + 1\\
        \label{eqg2}g_2 = g_1 + 3
    \end{gather}
    
    \noindent hold. Using the above information, it will be shown that irrespective of the location of $\gamma' = (g_1, g_2)$ in the diagonal $D_{m+3}$ a contradiction can be found.
    
    \begin{description}
        \item[Case 1.] Suppose $a_1 < g_1$.\\
        Equations (\ref{eqa2}) and (\ref{eqg2}) give $a_2 = a_1 + 1 < a_1 + 3 < g_1 + 3 = g_2$. Since $a_1 < g_1$ and $a_2 < g_2$ there is a jump from $\alpha$ to $\gamma'$, so by Lemma \ref{lem:diagsum} there exists a $\delta' \in D_{m+2}$ such that $\alpha + \delta' = \gamma'$. Since $c$ is a rainbow-free coloring, $c(\delta') \in \{c(\alpha), c(\gamma')\}$. However, $D_{m+2}$ contributes the color $c(\beta)$ so Lemma \ref{onedistinctcolorlem} is contradicted.
        
        \item[Case 2.] Suppose $g_1 \leq a_1$.\\ Since there is a jump from $\alpha$ to $\beta$, $a_1< b_1$ and $a_2 < b_2$. Thus, $g_1 < b_1$. Equations (\ref{eqa2}) and (\ref{eqg2}) give $g_2 \leq a_2 + 2$. Further, Equation (\ref{eqt2}) implies $t_1 \leq 3$, so $\alpha + \delta = \beta$ gives $a_2 + 3 \leq a_2 + t_2 = b_2$.  So $g_2 < b_2$.  This implies there is a jump from $\gamma'$ to $\beta$, so by Lemma \ref{lem:diagsum} there exists a $\delta' \in D_{m-1}$ such that $\gamma' + \delta' = \beta$.  Lemma \ref{lem:eachdiagcontributes} implies $D_{m-1}$ must contribute a color.  Thus, $c(\delta') \not\in \{c(\gamma'), c(\beta)\}$, contradicting that $c$ is rainbow-free.
    \end{description}
    
    In either case, a contradiction is obtained, so $a_1 = 1$ or $t_1 = 1$, that is, $\alpha=(1,2)$ or $\delta=(1,2)$.

    Since $c(\alpha)=c(\delta)$, without loss of generality, suppose $\alpha=(1,2)$. Now, assume, for the sake of contradiction, that $\frac{m}{2}+1 \leq s_3$. Define $\rho = (s_3-1,s_3-2)$ so that $\alpha + \rho = (s_3,s_3)$, and $c(\rho) \in \{c(\alpha),3\}$. Since $\rho \in D_{m-1}$, Lemmas \ref{onedistinctcolorlem} and \ref{lem:eachdiagcontributes} imply that $c(\rho)=3$. Lemma \ref{lem:eachdiagcontributes} further implies that $D_{m-1}$ must contribute a color, say $D_{m-1}$ contributes $c(\xi)$ for some $\xi = (e_1,e_2) \in D_{m-1}$. If $e_1 < s_3-1$, then there exists some $\kappa = (k,k) \in D_m$ such that $k < s_3$ and $\xi + \kappa = \rho$, a rainbow solution. Otherwise, suppose $s_3-1 < e_1$. Then there exists some $\kappa = (k,k) \in D_m$ such that $\rho + \kappa = \xi$. Recall $e_1 \leq m$ and $\frac{m}{2} + 1 \le s_3$, so
    \[k = e_1 - (s_3 - 1) \leq m - s_3 + 1 \leq (2s_3 - 2) - s_3 + 1  < s_3.\] Thus, $\rho + \kappa = \xi$ is a rainbow solution. Both cases contradict that $c$ is rainbow-free.
    
    Now consider when $s_3 < \frac{m}{2}+1$.  Lemma \ref{FGRWWrainbowlem} implies $4 \leq s_3$ so $7 \leq m \leq n$. Note that if $\chi = (x_1,x_2) \in D_{m+3}$ is such that $2 \leq x_1$, then there exists a jump from $\alpha$ to $\chi$ implying that $c(\chi) \in c(D_m)$.  Since $D_{m+3}$ must contribute a color and the only way it can contribute is if $x_1 = 1$, $D_{m+3}$ must contribute $c((1,4))$. If $5 \leq b_2$ (and $3 \leq b_1$), then there is a jump from $(1,4)$ to $\beta$. So, there exists some $\delta'' \in D_{m-1}$ with $c(\delta'') \in \{c((1,4)),c(\beta)\}$, which is a contradiction by Lemmas \ref{onedistinctcolorlem} and \ref{lem:eachdiagcontributes}. Thus $b_2 \in \{3,4\}$ but if $b_2 = 3$ there is not a jump from $\alpha$ to $\beta$ so $\beta = (2,4)$. Finally, $\alpha + \beta = (3,6)$. So, $c(3,6) \in \{c(\alpha),c(\beta)\}$, but $(3,6) \in D_{m+3}$, which is a contradiction by Lemmas \ref{onedistinctcolorlem} and \ref{lem:eachdiagcontributes} .  Therefore, contradictions are obtained in every situation when $a = m+1$ and $b=m+2$.
    
    The case when $a = m-1$ and $b = m-2$ is very similar. Essentially, reflect the coordinates and diagonals from the argument above across the main diagonal.  The analysis forces $\alpha = (2,1)$ or $\delta = (2,1)$ and $\rho = (s_3-2,s_3-1)$ is considered.  However, the cases at this point differ slightly.  Instead of scrutinizing $\frac{m}{2}+1 \leq s_3$ and $s_3 < \frac{m}{2}+1$ the cases that are considered are $\frac{m+1}{2}+1 \leq s_3$ and $s_3 < \frac{m+1}{2}+1$.  In any event, contradictions are established in all situations so there are no jumps from $\alpha$ to $\beta$ with $c(\alpha),c(\beta) \notin c(D_m)$ and $c(\alpha) \neq c(\beta)$. \end{proof}

Lemma \ref{lem:s3block} bounds $s_3$ which in turn bounds $s_2$ in the proof of Theorem \ref{thm:main3colors}.

\begin{lem}\label{lem:s3block}
Suppose $c$ is a exact, rainbow-free $(m+n+1)$-coloring of $[m]\times[n]$ for $eq$ with $3\le m \le n$ and $|c(D_m)|=3$. If $i,j<s_3$, then $c((i,j)) \in c(D_m)$.
\end{lem}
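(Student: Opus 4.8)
The goal is to show that if $i,j < s_3$, then the element $(i,j)$ receives a color already appearing on the main diagonal. The natural strategy is to suppose, for contradiction, that there exists some $(i,j)$ with $i,j < s_3$ and $c((i,j)) \notin c(D_m)$, and derive a rainbow solution or contradict an earlier structural lemma. Recall that by convention $c((1,1)) = 1 \in c(D_m)$, $c((s_2,s_2)) = 2 \in c(D_m)$, and since $s_3$ is the smallest coordinate with a third color on the main diagonal, every element $(x,x)$ with $x < s_3$ has color in $\{1,2\} \subseteq c(D_m)$. So the suspect element $(i,j)$ must be off the main diagonal, say WLOG $i < j < s_3$ (the case $j < i$ being symmetric by reflecting across $D_m$). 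Note $(i,j) \in D_{m-(j-i)}$, an off-diagonal; the color $c((i,j))$, being new, is the contributed color of that off-diagonal by Lemma \ref{lem:eachdiagcontributes}.

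First I would use the just-proved Lemma \ref{lem:3colornojumps}: there are no jumps between distinctly colored elements that both avoid $c(D_m)$. The plan is to produce, from the existence of this bad $(i,j)$, a second off-diagonal element colored outside $c(D_m)$ that forms a jump with $(i,j)$ — a direct contradiction. The key observation is that $(i,j)$ with $i,j < s_3$ sits in the "small corner" near $(1,1)$, so one can add to it small elements of $D_m$. Specifically, since $(i,j)$ and $(s_2,s_2)$ both lie strictly inside the $(s_3-1) \times (s_3-1)$ top-left block, and the relevant sums stay inside $\mn$ (here I will use $m \le n$ and the bound $s_3 \le m$), I would consider the solution $(i,j) + (\kappa,\kappa) = (i+\kappa, j+\kappa)$ for a suitable $\kappa < s_3$, or alternatively a difference/jump configuration. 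The element $(i+\kappa, j+\kappa)$ lies on the same off-diagonal $D_{m-(j-i)}$, so by Lemma \ref{lem:eachdiagcontributes} its color is either the contributed color $c((i,j))$ or lies in $c(D_m)$. Combining with the equation $(i,j) + (\kappa,\kappa) = (i+\kappa,j+\kappa)$, I would conclude that $(\kappa,\kappa)$ must be colored in $\{c((i,j))\} \cup c(D_m)$; since $(\kappa,\kappa) \in D_m$ and $c((i,j)) \notin c(D_m)$, no constraint yet — so this direct approach needs refinement.

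The cleaner route, which I would pursue, is to exploit the contributed color more carefully via jumps. Since $D_{m-(j-i)}$ contributes $c((i,j))$, and $(i,j)$ is at the extreme top-left of that diagonal, consider whether there is any element $\gamma$ on a neighboring contributing off-diagonal reachable by a jump from $(i,j)$. By Corollary \ref{cor:contributingoffdiag} and Lemma \ref{lem:eachdiagcontributes}, the off-diagonals $D_{m-(j-i)\pm 1}$, etc., also contribute colors outside $c(D_m)$; I would use Lemmas \ref{lem:inside S} and \ref{lem:outside S} together with the smallness of $(i,j)$ to show that $(i,j)$ jumps to (or from) such a contributing element, and that this jump is between two elements both avoiding $c(D_m)$ with distinct colors — contradicting Lemma \ref{lem:3colornojumps}. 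The bookkeeping to make the jump land on a genuinely contributing diagonal (not on $D_a$, $D_b$, or $D_m$) is where the hypothesis $i,j < s_3$ does the real work: it guarantees enough room between $(i,j)$ and the main diagonal, and the bound $s_3 \le m$ (from Lemma \ref{FGRWWrainbowlem}/Corollary \ref{cor:2power}) keeps everything inside $\mn$.

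\textbf{Main obstacle.} The delicate point will be handling the boundary cases where $(i,j)$ is so close to the main diagonal (i.e.\ $j - i$ is very small, like $1$) or so close to the corner that the would-be jump partner falls outside $\mn$ or onto $D_a$, $D_b$, or $D_m$ itself, so that Lemma \ref{lem:3colornojumps} cannot be invoked directly. In those corner cases I expect to need a separate small argument — likely exhibiting an explicit rainbow solution using $(1,1)$, $(s_2,s_2)$, and $(i,j)$ together with one more small element — rather than the generic jump machinery. Managing these edge cases cleanly, and verifying in each that the relevant sums and differences remain in $[m]\times[n]$, is the part of the proof that will require the most care.
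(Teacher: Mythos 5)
There is a genuine gap: your write-up is a plan whose decisive step is never carried out, and the step you defer is exactly the one that does not work as described. Your idea is to produce a jump between $(i,j)$ and some contributing element $\gamma$ of a nearby off-diagonal and then invoke Lemma \ref{lem:3colornojumps}. But Lemma \ref{lem:eachdiagcontributes} gives you no control over \emph{where} in its diagonal the contributing element sits: $\gamma$ may share a row or a column with $(i,j)$, or lie with one coordinate larger and the other smaller, and in all of those configurations there is no jump in either direction, so Lemma \ref{lem:3colornojumps} cannot be applied. Moreover, Lemmas \ref{lem:inside S} and \ref{lem:outside S} take an \emph{existing} jump from $\alpha$ to $\beta$ as a hypothesis and only then locate further jumps; you have no such starting jump, so citing them here is circular. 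Your fallback for the ``boundary cases'' (a rainbow solution built from $(1,1)$, $(s_2,s_2)$, $(i,j)$) is also not available in general, since $(1,1)+(i,j)=(s_2,s_2)$ only in the special case $i=j=s_2-1$. You correctly recognized that your first attempt (adding $(\kappa,\kappa)$ and staying on the same diagonal) yields no constraint, but the replacement you propose has the same unresolved core.

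The missing idea, which is how the paper argues, is to use the element $(s_3,s_3)$ itself: since $i,j<s_3\le m\le n$, the element $\alpha=(s_3-i,\,s_3-j)$ lies in $\mn$ and satisfies $(i,j)+\alpha=(s_3,s_3)$. Rainbow-freeness plus the fact that $D_k$ (the diagonal of $(i,j)$) contributes $c((i,j))$ forces $c(\alpha)=3$, and by Lemma \ref{lem:diagsum} $\alpha$ lies on an off-diagonal $D_a$ with $a\neq k$. One then shows $D_a$ cannot contribute at all: any other $\beta\in D_a$ differs from $\alpha$ by some $(g,g)\in D_m$; if $g<s_3$ then $c((g,g))\in\{1,2\}\neq 3=c(\alpha)$ and rainbow-freeness of $\{\alpha,(g,g),\beta\}$ forces $c(\beta)\in c(D_m)$, while if $g\ge s_3$ then both coordinates of $\beta$ exceed $s_3>i,j$, so there \emph{is} a guaranteed jump from $(i,j)$ to $\beta$ and Lemma \ref{lem:3colornojumps} forces $c(\beta)\in c(D_m)$. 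Hence $c(D_a)\setminus c(D_m)=\emptyset$, contradicting Lemma \ref{lem:eachdiagcontributes}. Note how the case split on $g$ versus $s_3$ is what supplies the jump that your sketch tried, unsuccessfully, to get for free from an arbitrary contributing element.
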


\bpf
If $(i,j) \in D_m$, then $c((i,j)) \in c(D_m)$. So, suppose $(i,j) \in D_k$, with $k\neq m$, and assume, for the sake of contradiction, that $c((i,j)) \notin c(D_m)$. By Lemma \ref{lem:eachdiagcontributes}, $D_k$ contributes $c((i,j))$. Define $\alpha \in D_a$ so that $(i,j) + \alpha = (s_3,s_3)$ so $c(\alpha)\in \{c((i,j)), 3$\}. Lemma \ref{lem:diagsum} implies $\alpha$ and $(i,j)$ are in different diagonals, and Lemma \ref{lem:eachdiagcontributes} gives $c(\alpha) \neq c((i,j))$. Thus, $c(\alpha) = 3$. 
\par
Let $\beta = (b_1,b_2) \in D_a$ be distinct from $\alpha$. It will be shown $c(\beta) \in c(D_m)$. First, suppose there exists a $\gamma = (g_1,g_2) \in D_m$ such that $\alpha + \gamma = \beta$. If $g_1 < s_3$, then $c(\gamma)\in\{1,2\}$. So, $c(\gamma) \neq c(\alpha)$ implying $c(\beta) \in c(D_m)$. If $s_3 \leq g_1$, then $s_3 \leq g_2$ implying $i < s_3 \leq g_1 < b_1$ and $j < s_3 \leq g_2 < b_2$. So there is a jump from $(i,j)$ to $\beta$, and Lemma \ref{lem:3colornojumps} implies $c(\beta) \in c(D_m)$. 
\par
Otherwise, suppose $\beta + \gamma = \alpha$. Then $g_1 < s_3$, so $c(\gamma)\in \{1,2\}$ which implies $c(\alpha) \neq c(\gamma)$ and $c(\beta) \in c(D_m)$. Thus, $c(D_{a})\setminus c(D_m) = \emptyset$, contradicting Lemma \ref{lem:eachdiagcontributes}. Therefore, $c((i,j)) \in c(D_m)$.
\epf

Lemma \ref{thm:main3colors} gives that there are no rainbow-free $(m+n+1)$-colorings of $m\times n$ with no more than $3$ colors in the main diagonal. 
Assuming that there is such a coloring, this proof shows that there must exist a vertical and horizontal pair that intersect $W$. 
This gives a contributing disjoint corner, which contradicts Lemma \ref{lem: hor-vert-pairs}.

\begin{thm}\label{thm:main3colors}
    If $c$ is an exact $(m+n+1)$-coloring of $\mn$ with $3\le m \le n$ and $|c(D_m)|\le 3$, then $\mn$ contains a rainbow solution to $eq$.  
\end{thm}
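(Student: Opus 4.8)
The plan is to assume toward a contradiction that $c$ is an exact, rainbow-free $(m+n+1)$-coloring of $\mn$ with $3 \le m \le n$ and $|c(D_m)| \le 3$. By Corollary~\ref{mainlessthan3} we actually have $|c(D_m)| = 3$, so the full machinery of this section applies: by Lemma~\ref{lem:eachdiagcontributes} each of the $m+n-2$ off-diagonals contributes exactly one color that appears nowhere else in $\mn$, by Lemma~\ref{lem:3colornojumps} there are no jumps between distinctly-colored elements whose colors avoid $c(D_m)$, and by Lemma~\ref{lem:s3block} every element $(i,j)$ with $i,j < s_3$ is colored from $c(D_m)$. The endgame, per the remark preceding the statement, is to locate a contributing disjoint corner $P = P_v \cup P_h$ and invoke Lemma~\ref{lem: hor-vert-pairs} for the contradiction. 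So the task reduces to producing a vertical contributing pair and a horizontal contributing pair that are disjoint and each meet $W = W_1 \cup W_2$.

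First I would pin down $s_2$ and $s_3$ using the counting bounds. Since $|c(D_m)| = 3$, Lemma~\ref{lem:s2} gives $3 \le \log_2(m/s_2) + 2$, i.e.\ $s_2 \le m/2$; combined with Corollary~\ref{cor:2power} ($2^{\ell-2} s_2 \le m$ with $\ell = 3$, again $2 s_2 \le m$) and with Lemma~\ref{lem:s3block}'s block of $c(D_m)$-colored cells of side roughly $s_3$, I expect to force $s_2$ and $s_3$ to be small relative to $m$ and $n$ — small enough that the $W_1$, $W_2$ strips (which are the translates of $\mn$ by $\pm(s_2,s_2)$, so they overlap in a region of size about $(m - s_2) \times (n - s_2)$) cover all but a corner of the array. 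Next I would exploit the contributing structure: among the $m+n-2$ off-diagonals, consecutive ones $D_k, D_{k+1}$ each contribute a fresh color, and the contributing element of each off-diagonal cannot make a jump to (or receive a jump from) the contributing element of another off-diagonal by Lemma~\ref{lem:3colornojumps}. Picking two adjacent contributing off-diagonals on the "short" side and two on the "long" side, and using that their contributing cells must avoid the $c(D_m)$-block near the origin (so they sit well inside $W$), I would extract a vertical pair from one adjacent pair of diagonals and a horizontal pair from another. The geometric input here is that if $\alpha \in D_k$ and $\beta \in D_{k+1}$ are both contributing and not related by a jump, then (reading the definitions of $D_k$ and of "jump") $\beta$ is obtained from $\alpha$ by stepping one unit in exactly one coordinate — either $\beta = (a_1, a_2+1)$ (horizontal) or $\beta = (a_1 - 1, a_2)$ (vertical) — because a jump would require a strict increase in both coordinates.

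The main obstacle, and where most of the work will go, is arranging \emph{disjointness} of $P_v$ and $P_h$ together with the two conditions $P_v \cap W \neq \emptyset$ and $P_h \cap W \neq \emptyset$ simultaneously. It is not enough to find one vertical and one horizontal contributing pair; they must be the right ones. I anticipate a case analysis driven by the position of the $c(D_m)$-colored block (of side $\approx s_3$) guaranteed by Lemma~\ref{lem:s3block}: the contributing cells of the off-diagonals are pushed out of that block, and one must check that the off-diagonals near the bottom edge ($D_k$ for small $k$, near $(m,1)$) yield a vertical pair while those near the right edge (large $k$, near $(1,n)$) yield a horizontal pair, with both pairs landing in the overlap $W_1 \cap W_2$ rather than in the excluded corners $\mn \setminus W$. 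Handling the boundary off-diagonals $D_1$ and $D_{m+n-1}$ (which are single cells and cannot host a pair) and the main diagonal itself (which is not available for $P$) requires care, as does the possibility that all contributing cells in some range happen to be collinear along a single diagonal — but Lemma~\ref{onedistinctcolorlem} and the fact that distinct off-diagonals contribute distinct colors rule out degenerate configurations. Once $P$ is in hand, Lemma~\ref{lem: hor-vert-pairs} closes the argument.
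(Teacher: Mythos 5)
Your skeleton matches the paper's: reduce to $|c(D_m)|=3$ via Corollary~\ref{mainlessthan3}, invoke Lemmas~\ref{lem:eachdiagcontributes}, \ref{lem:3colornojumps}, \ref{lem:s3block}, observe that consecutive contributing elements must be horizontally or vertically adjacent, and finish by producing a contributing disjoint corner to contradict Lemma~\ref{lem: hor-vert-pairs}. But the step you yourself flag as "the main obstacle" is exactly the content of the paper's proof, and your sketch of how to handle it would not work. Nothing forces a consecutive contributing pair near the bottom edge to be vertical and one near the right edge to be horizontal; orientation of these pairs is not determined by position, and a priori every consecutive contributing pair in the array could be horizontal (or every one vertical), in which case your plan never produces the two differently-oriented pairs needed for a disjoint corner. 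A crude count does not rescue this: with three colors on $D_m$ there are roughly $m+n-4$ consecutive contributing pairs, which is compatible with the trivial bounds of at most $n-1$ horizontal and $m-1$ vertical pairs, so you cannot conclude both orientations occur by counting alone.

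The missing idea is an orientation-forcing (propagation) argument. The paper takes the consecutive contributing pairs at $D_{m-1},D_{m-2}$ and at $D_{m+1},D_{m+2}$ (each shown to meet $W_2$ using $s_3\le a_1$ and $s_2\le s_3/2$), and then, supposing the lower pair is vertical, defines $K=\min\{k: D_k \text{ contributes } c((x,a_1-1)) \text{ for some } x\}$ for the column $a_1-1$ containing that pair. The contributing elements of $D_{K-1}$ and $D_K$ form a consecutive contributing pair, so by Lemma~\ref{lem:3colornojumps} the element of $D_{K-1}$ is either $(x+1,a_1-1)$ or $(x,a_1-2)$; minimality of $K$ kills the first option, manufacturing a horizontal pair in that column that still meets $W_2$ and is disjoint from the pair above the main diagonal. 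The symmetric max-$L$ argument along a row handles the case that the lower pair is horizontal. Without this device (or an equivalent one) your proposal does not close: it is a correct plan up to, but not including, the theorem's essential step.
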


\begin{proof}
    Assume, for the sake of contradiction, that $c$ is rainbow-free. Then Corollary \ref{mainlessthan3} implies $|c(D_m)|=3$. Note Lemma \ref{lem:eachdiagcontributes} implies every off-diagonal is a contributing diagonal.  In particular, $D_{m-1}$ and $D_{m-2}$ contribute a color.  Suppose for some $\alpha = (a_1,a_1-1) \in D_{m-1}$, $D_{m-1}$ contributes $c(\alpha)$ and for some $\beta = (b_1,b_1-2) \in D_{m-2}$, $D_{m-2}$ contributes $c(\beta)$. Thus, $P = \{\alpha,\beta\}$ is a consecutive contributing pair of elements and Lemma \ref{lem:3colornojumps} indicates $P$ is a vertical or horizontal pair.  Further, Lemma \ref{lem:s3block} implies that $s_3\leq a_1,b_1$. Note that $2 \leq s_2 \leq \frac{s_3}{2} \leq \frac{a_1}{2}$ with the middle inequality being a result of Lemma \ref{FGRWWrainbowlem}. Continuing, \[s_2 \leq \frac{a_1}{2} = a_1 - \frac{a_1}{2} < a_1-1 < a_1.\]
     So, $\alpha = (a_1,a_1-1) \in W_2$ and $P$ intersects $W$.   A similar argument indicates a vertical or horizontal pair, call it $P'$, exists and intersects $D_{m+1}$, $D_{m+2}$ and $W_2$. 
    \par
    Assume, for the sake of contradiction, that $P$ is a vertical pair. Then $a_1-1 = b_1-2$. Define $K$ to be the minimum $k$ such that $D_k$ contributes the color of an element of the $a_1-1$ column. More precisely, 
    
    \[K = \min\{ k : D_k \text{ contributes } c((x,a_1-1)) \text{ for some $x$} \}.\] Notice that $K$ exists and $K \le m-2$.  By construction, $D_K$ contributes $c((x,a_1-1))$ for $x=m-K+a_1-1$. 
   
    By Lemmas \ref{lem:eachdiagcontributes} and \ref{lem:3colornojumps}, $D_{K-1}$ must contribute $c((x+1,a_1-1))$ or $c((x,a_1-2))$. By the minimality of $K$, $D_{K-1}$ must contribute $c((x,a_1-2))$. This implies $P_1 =\{(x,a_1-2),(x,a_1-1)\}$ is a horizontal pair. Now, since $s_2 < a_1-1 < m-K+a_1-1 = x$, so $P_1$ intersects $W_2$.  However, now $P_1$ and $P'$ or $P$ and $P'$ are contributing disjoint corners which contradicts Lemma \ref{lem: hor-vert-pairs}.  Thus, $P$ must be a horizontal pair.
    
    Note that $D_{m+1}$ must contribute $c(\gamma)$ for some $\gamma = (g_1,g_1+1) \in D_{m+1}$. Then $s_3 \leq g_1+1$ by Lemma \ref{lem:s3block}. Now, define $L$ to be the maximum $\ell$ such that $D_\ell$ contributes the color of an element of the $g_1$ row. Specifically, \[L = \max\{\ell : D_\ell \text{ contributes } c((g_1,y)) \text{ for some $y$}\}.\] Again, notice that $m+1 \le L$.  So, $D_L$ contributes $c((g_1,y))$ for $y = L-m+g_1$.

    Then Lemma \ref{lem:3colornojumps} and the maximality of $L$ implies that $D_{L+1}$ must contribute $c((g_1-1,y))$. Then $P_2=\{(g_1,y),(g_1-1,y)\}$ is a vertical pair that intersects $W_2$, using a similar argument as before. Finally, since $s_2 \geq 2$, Lemma \ref{FGRWWrainbowlem} implies $s_2 < s_3-1 \leq g_1 < L-m+g_1 = y$. So, $(g_1,y) \in W_2$ implying that $P_2$ and $P$ form a contributing disjoint corner, which contradicts Lemma \ref{lem: hor-vert-pairs}.  Therefore, $\mn$ contains a rainbow solution.
\end{proof}

\section{Four or More Colors in Main Diagonal}\label{sec:nojumps}

The main result of Section \ref{sec:maindiagthree} states that every exact $(m+n+1)$-coloring of $\mn$ will contain a rainbow solution to $eq$ if there are three colors in the main diagonal. Section \ref{sec:nojumps} culminates with Theorem \ref{thm:nojumpsever} which shows that if an exact $(m+n+1)$-coloring of $\mn$ contains a jump, then a rainbow solution to $eq$ exists.  To start, a useful set of diagonals is defined.
   
   For each $\delta\in \mn$ define the set of off-diagonals \[\mathbb{D}_{\delta} := \{D_g \neq D_m \,:\, \text{for all $\gamma\in D_g$, either $\gamma+\delta$ or $\gamma-\delta$ is in $\mn$} \}.\]

 Lemma \ref{alternatinglem} gives an upper bound for the number of colors that can exist in $[m]\times [n]$ that do not appear in the main diagonal. The upper bound is written with respect to the cardinality of some $\mathbb{D}_{\delta}$ with $c(\delta) \notin c(D_m)$. The proof proceeds with two cases, when there are at least $\frac{|\mathbb{D}_{\delta}|}{3}$ non-contributing diagonals in $\mathbb{D}_{\delta}$ and when there are less than $\frac{|\mathbb{D}_{\delta}|}{3}$ non-contributing diagonals in $\mathbb{D}_{\delta}$. The former case is immediate. In the latter case, it is shown that there are at least $\frac{2|\mathbb{D}_{\delta}|}{3}-3$ contributing diagonals that are each associated with some non-contributing diagonal and at most two of these contributing diagonals are associated with the same non-contributing diagonal. This leads to the conclusion that there are at least $\frac{|\mathbb{D}_{\delta}|}{3}-\frac{3}{2}$ non-contributing diagonals in $[m]\times [n]$. Subtracting this from the total number of off-diagonals gives the upper bound.

\begin{lem}\label{alternatinglem} Let $c$ be an exact, rainbow-free $(m+n+1)$-coloring of $[m]\times[n]$ for $eq$ with $3 \le m\leq n$. For each $\delta$ such that $c(\delta)\notin c(D_m)$,  \[|c([m]\times[n])\setminus c(D_m)|\leq m+n-\frac{1}{2}-\frac{|\mathbb{D}_\delta|}{3}.\]
\end{lem}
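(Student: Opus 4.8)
The plan is to follow the outline given just before the statement, analyzing the diagonals in $\mathbb{D}_\delta$ and counting contributing versus non-contributing ones. Fix $\delta$ with $c(\delta)\notin c(D_m)$, and write $N$ for the set of non-contributing off-diagonals lying in $\mathbb{D}_\delta$, so $|\mathbb{D}_\delta| - |N|$ of the diagonals in $\mathbb{D}_\delta$ are contributing. I split into the two cases suggested. \textbf{Case 1: $|N| \ge |\mathbb{D}_\delta|/3$.} Then there are at least $|\mathbb{D}_\delta|/3$ non-contributing off-diagonals in all of $[m]\times[n]$, so the number of contributing off-diagonals is at most $(m+n-2) - |\mathbb{D}_\delta|/3$. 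Since each contributing off-diagonal contributes at most one color outside $c(D_m)$ (by the definition of "contributes" together with Lemma \ref{onedistinctcolorlem}), we get $|c([m]\times[n])\setminus c(D_m)| \le m+n-2-|\mathbb{D}_\delta|/3 \le m+n-\tfrac12-\tfrac{|\mathbb{D}_\delta|}{3}$, which is the desired bound.

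\textbf{Case 2: $|N| < |\mathbb{D}_\delta|/3$.} Here at least $|\mathbb{D}_\delta| - |N| > \tfrac{2}{3}|\mathbb{D}_\delta|$ of the diagonals in $\mathbb{D}_\delta$ are contributing. The key idea is to pair each such contributing diagonal $D_g \in \mathbb{D}_\delta$ with a non-contributing diagonal. Given a contributing $D_g\in\mathbb{D}_\delta$ with contributed color $c(\gamma)$, $\gamma\in D_g$, we know $\gamma+\delta$ or $\gamma-\delta$ lies in $[m]\times[n]$; by Lemma \ref{lem:diagsum} this element lies in $D_{g+t-m}$ or $D_{g-t+m}$ where $D_t$ is the diagonal of $\delta$. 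Since $c$ is rainbow-free, the color of that element lies in $\{c(\gamma),c(\delta)\}$; because $D_g$ is the \emph{first} diagonal attaining $c(\gamma)$ and $c(\delta)\notin c(D_m)$, this forces the target diagonal $D_{g\pm(t-m)}$ to be \emph{non-contributing} (it cannot introduce a new color). So each of the $>\tfrac23|\mathbb{D}_\delta|$ contributing diagonals in $\mathbb{D}_\delta$ is "associated" (via $\pm(t-m)$) to a non-contributing diagonal; after discarding a bounded number of boundary exceptions (the $\le 3$ in the outline, coming from $D_a$, $D_b$, $D_m$ or edge effects where $\gamma\pm\delta$ need not exist for \emph{every} $\gamma$ — but membership in $\mathbb{D}_\delta$ guarantees it does), we have at least $\tfrac{2}{3}|\mathbb{D}_\delta| - 3$ such associations. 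Since the map $g\mapsto g\pm(t-m)$ is at most two-to-one (two choices of sign), the number of distinct non-contributing diagonals hit is at least $\tfrac12\big(\tfrac{2}{3}|\mathbb{D}_\delta|-3\big) = \tfrac{|\mathbb{D}_\delta|}{3}-\tfrac32$. Thus there are at least $\tfrac{|\mathbb{D}_\delta|}{3}-\tfrac32$ non-contributing off-diagonals in $[m]\times[n]$, so at most $(m+n-2) - \big(\tfrac{|\mathbb{D}_\delta|}{3}-\tfrac32\big) = m+n-\tfrac12-\tfrac{|\mathbb{D}_\delta|}{3}$ contributing off-diagonals, and counting colors as before gives the bound.

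\textbf{Main obstacle.} The delicate step is Case 2: making precise the claim that a contributing diagonal in $\mathbb{D}_\delta$ forces an \emph{associated} diagonal to be non-contributing, and controlling exactly which boundary diagonals fail this (to justify the "$-3$"). One must check: (i) the associated diagonal $D_{g\pm(t-m)}$ is genuinely an off-diagonal (not $D_m$) or else is absorbed into the exception count; (ii) that when both $\gamma+\delta$ and $\gamma-\delta$ land in $[m]\times[n]$ we still only get two target diagonals, so the multiplicity is $\le 2$; and (iii) that the associated diagonal truly cannot contribute — i.e., every element of it that could be the sum/difference gets a color in $\{c(\gamma),c(\delta)\}\subseteq c([m]\times[n])\setminus c(D_m)$ already realized at or before $D_g$, hence no new color — which uses rainbow-freeness and the minimality in the definition of "contributes." Once these bookkeeping points are pinned down, the arithmetic $\tfrac12(\tfrac23|\mathbb{D}_\delta|-3)=\tfrac{|\mathbb{D}_\delta|}{3}-\tfrac32$ and the final subtraction are routine.
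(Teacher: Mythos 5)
Your Case 1 and the overall architecture of Case 2 coincide with the paper's proof, but the step you yourself flag as the ``main obstacle'' is a genuine gap, and the exceptions you propose to discard are the wrong ones. Rainbow-freeness only forces $c(\gamma+\delta)$ or $c(\gamma-\delta)$ to lie in $\{c(\gamma),c(\delta)\}$; in the sub-case where that color is $c(\delta)$, the target diagonal $D_{g\pm(t-m)}$ may very well be the unique diagonal, call it $D_x$, that \emph{contributes} $c(\delta)$ --- a contributing diagonal, so your claim that the target ``cannot introduce a new color'' fails there. Your criterion in (iii), that the color is ``already realized at or before $D_g$,'' is exactly where this breaks: $c(\delta)$ need not appear in any diagonal indexed at or before $g$, and what actually matters is whether the target is the first diagonal containing $c(\delta)$. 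There is a second unhandled degenerate case: if $D_g$ itself contributes $c(\delta)$ (i.e.\ $c(\gamma)=c(\delta)$), the solution $\{\gamma,\delta,\gamma\pm\delta\}$ is automatically non-rainbow and yields no constraint on the target at all. Finally, the exceptional diagonals you name, $D_a$, $D_b$, $D_m$, do not exist in this lemma's setting (there is no jump from $\alpha$ to $\beta$ here), and $D_m$ can never be the target anyway, since the forced color lies outside $c(D_m)$.

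The paper repairs precisely this point: it restricts the pairing to $G$, the set of contributing diagonals of $\mathbb{D}_\delta$ other than $D_x$, $D_{x+t-m}$, $D_{x-t+m}$, which is where the ``$-3$'' really comes from (one exclusion for the at most one contributing diagonal containing $c(\delta)$, two more for the diagonals whose target could be $D_x$). For $D_g\in G$ the target contains $c(\gamma)$ or $c(\delta)$, has no other color outside $c(D_m)$ by Lemma \ref{onedistinctcolorlem}, and cannot contribute: either its color already appears in a smaller-indexed diagonal, or it would have to equal $D_x$, which was excluded. With that correction your two-to-one count and the closing arithmetic (including the harmless case $\frac{2|\mathbb{D}_\delta|}{3}-3\le 0$, which the paper treats as ``$G$ empty'') go through exactly as you wrote them; what is missing from your proposal is the correct identification of the exceptional set and the argument above, which you left unresolved.
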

    
\begin{proof}
    Assume $\delta \in D_t$ and suppose there are $k$ non-contributing diagonals in $\DD_\delta$, and assume
     $k \geq \frac{|\DD_\delta|}{3}$.  Since there are $m+n-2$ off-diagonals and at least $\frac{|\DD_\delta|}{3}$ are non-contributing, it follows that 
     
    \[\left|c([m]\times[n])\setminus c(D_m)\right|\leq m+n-2-\frac{|\mathbb{D}_\delta|}{3} < m+n-\frac{1}{2}-\frac{|\mathbb{D}_\delta|}{3}.\]
    
    On the other hand, suppose $k < \frac{|\DD_\delta|}{3}$. This implies there are at least $\frac{2|\DD_\delta|}{3}$ contributing diagonals in $\DD_\delta$. Note that at most one of these contributing diagonals contains $c(\delta)$. Define $D_x$ to be the diagonal in $\mn$ that contributes $c(\delta)$.  This means there are at least $\frac{2|\DD_\delta|}{3} - 1$ contributing diagonals in $\DD_\delta$ that do not contain $c(\delta)$. Let $D_g$ be one of those diagonals. Then there exists $\gamma \in D_g$ such that $D_g$ contributes $c(\gamma)$ with $c(\gamma)\notin c(D_m)\cup \{c(\delta)\}$.  Since $D_g \in \DD_\delta$, either $\gamma+\delta \in \mn$ or $\gamma-\delta \in \mn$ implying either $\{\gamma,\delta,\gamma+\alpha\}$ or $\{\delta, \gamma-\delta, \gamma\}$ is a solution in $\mn$.  Specifically, $\gamma + \delta\in D_{g+t-m}$ and $\gamma-\delta \in D_{g-t+m}$ by Lemma \ref{lem:diagsum}.
    
    Since $c$ is rainbow-free, either $c(\gamma+\delta)$ or $c(\gamma-\delta)$ is in $\{c(\gamma),c(\delta)\}$.  This implies either $c(D_{g+t-m})$ or $c(D_{g-t+m})$ contains $c(\gamma)$ or $c(\delta)$. If $D_{g+t-m}$ or $D_{g-t+m}$ contains $c(\gamma)$, then, since $D_g$ contributes $c(\gamma)$, either $D_{g+t-m}$ or $D_{g-t+m}$ is non-contributing. If $D_{g+t-m}$ or $D_{g-t+m}$ contains $c(\delta)$, then $D_{g+t-m}$ or $D_{g-t+m}$ is contributing if and only if $D_{g+t-m} = D_x$ or $D_{g-t+m} = D_x$.  This analysis shows that if $D_g\in \{D_{x+t-m},D_{x-t+m}\}$, then $D_g$ does not necessarily correspond to a non-contributing diagonal.  Applying similar analysis to $D_x$ indicates that if $D_x\in \DD_\delta$, then $D_x$ does not necessarily correspond to a non-contributing diagonal.
    
    Define $G$ to be the set of all contributing diagonals in $\DD_{\delta}\setminus\{D_x,D_{x+t-m},D_{x-t+m}\}$.  Thus, each $D_g \in G$ corresponds to at least one non-contributing off-diagonal, namely $D_{g+t-m}$ or $D_{g-t+m}$, and
    
    \[ \frac{2|\DD_\delta|}{3}-3\le |G|.\]
    
    If $G$ is empty, then $|\DD_\delta| \le 9/2$ so
    
    \begin{align*}
        \left|c([m]\times[n])\setminus c(D_m)\right| & \leq m+n-2 \\ 
        &  \le
        m+n-\frac{1}{2}-\frac{|\DD_\delta|}{3}.
    \end{align*}
    
    So assume $G$ is non-empty and let $D_g \in G$ so that $D_{g+t-m}$ or $D_{g-t+m}$ is non-contributing. Without loss of generality, suppose $D_{g+t-m}$ is non-contributing. For the sake of contradiction, suppose $D_{h},D_{h'} \in G\setminus \{D_g\}$ are distinct with $D_{g+t-m} \in \{D_{h+t-m},D_{h-t+m}\}\cap\{D_{h'+t-m},D_{h'-t+m}\}$. Since $g \notin \{h,h'\}$, it must be that $g+t-m=h-t+m$ and $g+t-m=h'-t+m$ implying $h = h'$, a contradiction. So, there is at most one diagonal $D_h$ in $G$ such that $D_{g+t-m} \in \{D_{h+t-m},D_{h-t+m}\}$.  In other words, at most two diagonals in $G$ correspond to the same non-contributing off-diagonal.

    \par
    Since $D_g \in G$ was arbitrary, it can be concluded that at least $\frac{|G|}{2}$ diagonals are non-contributing. Thus, 
    
    \begin{align*}
        \left|c([m]\times[n])\setminus c(D_m)\right| & \leq m+n-2-\frac{|G|}{2} \\ 
        & \leq m+n-2-\frac{\frac{2|\DD_\delta|}{3}-3}{2} \\
        &  = m+n-\frac{1}{2}-\frac{|\DD_\delta|}{3}.
    \end{align*}
\end{proof}

As stated, the main goal of this section is to show that a jump with distinctly colored elements yields a rainbow solution.
Propositions \ref{upperleftlem} and \ref{upperboundjumps} give lower and upper bounds, respectively, on the jump distance which leaves only a few cases to analyze.

\begin{prop}\label{upperleftlem} Let $c$ be an exact, rainbow free $(m+n+1)$-coloring of $[m]\times[n]$ for $eq$ with $3\le m\leq n$. If there exists an $\delta = (d_1,d_2)$ such that $c(\delta)\notin c(D_m)$, then \[\frac{4m+9-6\left\lfloor\log_2(m)+1\right\rfloor}{4} \le d_1+d_2.\]  
 \end{prop}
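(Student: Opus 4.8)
The plan is to combine the counting inequality of Lemma \ref{alternatinglem} with an explicit geometric description of the diagonal set $\mathbb{D}_{\delta}$, and then solve the resulting inequality for $d_1+d_2$. Write $\ell=|c(D_m)|$. Since $c$ is an exact $(m+n+1)$-coloring, $|c(\mn)\setminus c(D_m)|=m+n+1-\ell$, and Lemma \ref{lem:s2} gives $\ell\le\lfloor\log_2 m+1\rfloor$. So it suffices to establish the sharper inequality $\tfrac14(2m+2n-6\ell+9)\le d_1+d_2$ and then weaken it using $n\ge m$ and the bound on $\ell$.

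Next I would pin down $\mathbb{D}_{\delta}$. An element $\gamma=(g_1,g_2)$ admits a jump with $\delta$ precisely when $\gamma\in R_+:=[1,m-d_1]\times[1,n-d_2]$ (so that $\gamma+\delta\in\mn$) or $\gamma\in R_-:=[d_1+1,m]\times[d_2+1,n]$ (so that $\gamma-\delta\in\mn$). Hence a diagonal $D_g\neq D_m$ lies in $\mathbb{D}_{\delta}$ if and only if $D_g$ is disjoint from $F:=\mn\setminus(R_+\cup R_-)$. In the principal case, when $2d_1\le m$ and $2d_2\le n$, the set $F$ is exactly the lower-left $d_1\times d_2$ corner block together with the upper-right $d_1\times d_2$ corner block. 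A short computation of diagonal indices (in the style of Lemma \ref{lem:diagsum}) shows that the diagonals meeting the lower-left block are $D_1,\dots,D_{d_1+d_2-1}$ and those meeting the upper-right block are $D_{m+n+1-d_1-d_2},\dots,D_{m+n-1}$; since $2(d_1+d_2)\le m+n$ these two ranges are disjoint, so at most $2(d_1+d_2-1)$ off-diagonals fail to lie in $\mathbb{D}_{\delta}$. As there are $m+n-2$ off-diagonals in all, this gives $|\mathbb{D}_{\delta}|\ge (m+n-2)-2(d_1+d_2-1)=m+n-2(d_1+d_2)$.

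Substituting this lower bound into Lemma \ref{alternatinglem} yields
\[
m+n+1-\ell \;=\; |c(\mn)\setminus c(D_m)| \;\le\; m+n-\tfrac12-\frac{m+n-2(d_1+d_2)}{3},
\]
which, after clearing denominators, rearranges to $m+n-2(d_1+d_2)\le 3\ell-\tfrac92$, i.e. $2(d_1+d_2)\ge m+n-3\ell+\tfrac92$. Dividing by $2$ and invoking $\ell\le\lfloor\log_2 m+1\rfloor$ and $n\ge m$ produces the claimed bound.

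The step I expect to be the main obstacle is the remaining regime $2d_1>m$ or $2d_2>n$: there $F$ acquires a full horizontal band of rows (resp.\ vertical band of columns), and one checks that then every diagonal meets $F$, so $\mathbb{D}_{\delta}=\emptyset$ and Lemma \ref{alternatinglem} returns only $\ell\ge 2$, which is far too weak. This regime has to be handled on its own. For the small values of $m$ in which the stated bound already lies below $\lceil(m+1)/2\rceil+1$, the trivial estimate $d_1+d_2\ge\lceil(m+1)/2\rceil+1$ finishes it; for the larger values one must instead rule out directly that $c(\delta)\notin c(D_m)$ can hold when $d_1>m/2$ (or $d_2>n/2$), exhibiting a rainbow solution built from $\delta$ together with suitably chosen elements of the main diagonal. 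Isolating and settling this boundary case is the key technical point of the proof.
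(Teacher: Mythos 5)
Your main-case argument is the paper's own proof, essentially verbatim: bound $|\mathbb{D}_\delta|$ below by $m+n-2(d_1+d_2)$ via the two $d_1\times d_2$ corner blocks of points admitting no jump with $\delta$, feed that into Lemma \ref{alternatinglem}, compare against the $m+n+1-|c(D_m)|$ colors missing from the main diagonal together with $|c(D_m)|\le\left\lfloor\log_2(m)+1\right\rfloor$, and solve for $d_1+d_2$ using $m\le n$. On the regime $2d_1\le m$ and $2d_2\le n$ you have reproduced the published argument exactly (the paper phrases the block count as ``$S_\delta$ intersects $2(d_1+d_2-1)$ diagonals'' and records the same inequality $|\mathbb{D}_\delta|\ge m+n-2d_1-2d_2$).

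The one divergence is the boundary regime you flag, and you should know the paper does not treat it: its assertion about $S_\delta$ is precisely the two-corner-block picture, so the case $2d_1>m$ or $2d_2>n$ (where a full band of rows or columns lies in $S_\delta$, $\mathbb{D}_\delta$ can be empty, and $m+n-2(d_1+d_2)$ can still be positive) is passed over in silence there as well; you have not missed an idea that the paper supplies. As a standalone proof, however, your proposal is incomplete, because you explicitly leave this case open, and the fallback you offer does not close it in general: for $m=12\le n$ and $\delta=(7,1)$ the trivial estimate gives only $d_1+d_2\ge 8$, while the claimed bound is $33/4$, i.e.\ $d_1+d_2\ge 9$; the same issue arises from $2d_2>n$ since that only yields $d_1+d_2>n/2+1$. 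So to finish you would genuinely need the direct argument you allude to (forcing a rainbow solution from $\delta$ and suitably chosen main-diagonal elements when $d_1>m/2$ or $d_2>n/2$), or some other treatment of that regime; it cannot be absorbed into the counting step, and it is the only point at which your write-up and the paper's proof both stop short.
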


\begin{proof}
    Define \[{S}_{\delta}:= \{\gamma \in \mn\, |\,  \gamma+\delta,\gamma-\delta \notin \mn\}.\] Notice $\delta' = (m-d_1+1,d_2)\in D_{d_1+d_2-1}$ and $\delta'\pm\delta \notin \mn$ (see Figure  \ref{fig:D_{delta}}). Since there are $m+n-2$ off-diagonals and $S_\delta$ intersects $2(d_1+d_2-1)$ diagonals, it follows $|\DD_{\delta}| = m+n-2 - 2(d_1+d_2-1) = m+n-2d_1-2d_2$. If $S_\delta$ intersects $D_m$, then $|\DD_{\delta}| = m+n-2d_1-2d_2+1$. So, $|\DD_{\delta}| \geq m+n-2d_1-2d_2.$
    Therefore, Lemma \ref{alternatinglem} implies that 
    
    \begin{equation}\label{ineq1}
    \begin{split}
        |c([m]\times[n])\setminus c(D_m)| & \leq m+n-\frac{1}{2} - \frac{|\DD_\delta|}{3} \\
        & \leq m+n-\frac{1}{2}-\frac{m+n-2d_1-2d_2}{3}\\
       & = \frac{4m+4n-3+4d_1+4d_2}{6}.
    \end{split}
    \end{equation}
    
    Because $c$ is a rainbow-free, Theorem \ref{FGRWWrainbowthm} implies $|c(D_m)|\leq \left\lfloor\log_2(m)+1\right\rfloor$.  Further, $c$ is an exact $m+n+1$-coloring, so
    \begin{equation}\label{ineq2}
        m+n+1-\left\lfloor\log_2(m)+1\right\rfloor\leq |c([m]\times[n])\setminus c(D_m)|.
    \end{equation}
    Inequalities (\ref{ineq1}) and (\ref{ineq2}) give
    \[m+n+1-\left\lfloor\log_2(m)+1\right\rfloor\leq |c([m]\times[n])\setminus c(D_m)|\leq \frac{4m+4n-3+4d_1+4d_2}{6}. \]
    Isolating $d_1 + d_2$ and using $m\le n$ gives
    
    \begin{equation}\label{ineq3} \frac{4m+9-6\left\lfloor\log_2(m)+1\right\rfloor}{4}\le \frac{2m+2n+9-6\left\lfloor\log_2(m)+1\right\rfloor}{4} \le d_1+d_2.
    \end{equation}\end{proof}
    
    \begin{figure}[ht!]
        \centering  
        \begin{tikzpicture}[scale = .75] 
            \draw[step=.5cm,gray,thin] (0,0) grid (10,6);
            \filldraw[fill=cbred, draw=black, opacity = .3] (0,0) rectangle (1.5,2);
            \filldraw[fill=cbred, draw=black, opacity = .3] (8.5,4) rectangle (10,6);
            \node at (.75,1) {$S_{\delta}$};
            \node at (9.25,5) {$S_{\delta}$};
            
            \node at (1.25,4.25) {$\delta$};
            
            \node at (1.25,1.75) {$\delta'$};
            
            \filldraw[fill=cbcyan, draw = black, opacity=.2] (3,0) rectangle (10,.5);
            \filldraw[fill=cbcyan, draw = black, opacity=.2] (2.5,.5) rectangle (10,1);
            \filldraw[fill=cbcyan, draw = black, opacity=.2] (2,1) rectangle (10,1.5);
            \filldraw[fill=cbcyan, draw = black, opacity=.2] (1.5,1.5) rectangle (10,2);
            \filldraw[fill=cbcyan, draw = black, opacity=.2] (1,2) rectangle (10,2.5);
            \filldraw[fill=cbcyan, draw = black, opacity=.2] (0.5,2.5) rectangle (10,3);
            \filldraw[fill=cbcyan, draw = black, opacity=.2] (0,3) rectangle (9.5,3.5);
            \filldraw[fill=cbcyan, draw = black, opacity=.2] (0,3.5) rectangle (9,4);
            \filldraw[fill=cbcyan, draw = black, opacity=.2] (0,4) rectangle (8.5,4.5);
            \filldraw[fill=cbcyan, draw = black, opacity=.2] (0,4.5) rectangle (8,5);
            \filldraw[fill=cbcyan, draw = black, opacity=.2] (0,5) rectangle (7.5,5.5);
            \filldraw[fill=cbcyan, draw = black, opacity=.2] (0,5.5) rectangle (7,6);
            
            \foreach \n in {0,...,11}
            {
                \filldraw[fill=white, draw = black!55]
                (.5*\n,6-.5*\n) rectangle (.5*\n+.5,5.5-.5*\n);
            }
            
            \node at (5,3) {\Large $\DD_{\delta}$};
        \end{tikzpicture}
        \caption{A visualization of $\delta$, $\delta'$ and $S_\delta$ (in red) and $\DD_\delta$ (in cyan) from Proposition \ref{upperleftlem}. }\label{fig:D_{delta}}
    \end{figure}
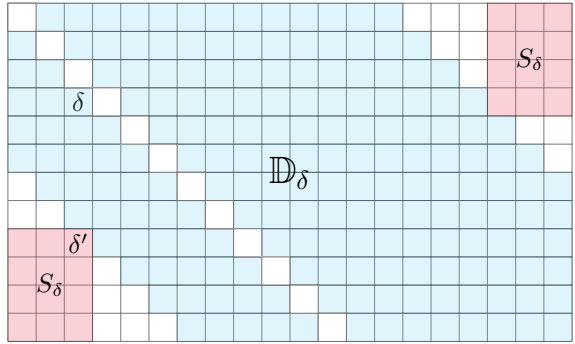

Lemmas \ref{jump with alpha}, \ref{jump with beta} and \ref{Glem} support Proposition \ref{upperboundjumps}.
The assumptions for all of these results are that there is a jump from $\alpha = (a_1,a_2)$ to $\beta = (b_1,b_2)$, $\alpha \in D_a$ and $\beta\in  D_b$, $c(\alpha),c(\beta) \notin c(D_m)$ and $c(\alpha)\neq c(\beta)$.
Also, $\alpha + (d_1,d_2) = \beta$.  The goal is to find a relationship between the jump distance, $d_1 + d_2$, and the number of contributing diagonals.  
The main challenge arises during the analysis of the diagonals that surround $D_a$ and $D_b$.
Technically, this set of surrounding diagonals is $S$ which was defined by (\ref{eqS}) and visualized in Figure \ref{fig:jump def}.  During the analysis of $S$ many iterations of jumps need to be considered.  For example, if $\gamma$ is an element in one of the diagonals of $S$ then there must be a jump from $\alpha$ to $\gamma$ (or from $\gamma$ to $\beta$) which introduces another element $\zeta$ where $\alpha + \zeta = \gamma$ (or $\gamma + \zeta = \beta$).  Lemma \ref{jump with alpha} starts investigating where $\zeta$ could be in the $\alpha + \zeta = \gamma$ case and Lemma \ref{jump with beta} starts investigating where $\zeta$ could be in the $\gamma + \zeta = \beta$ case.  Lemma \ref{Glem} adds more clarification about where the $\zeta$ elements can be.

\begin{lem}\label{jump with alpha}
    Let $c$ be an exact, rainbow free $(m+n+1)$-coloring of $[m]\times[n]$ for $eq$ with $3\le m \le n$, and suppose there is a jump from $\alpha\in D_a$ to $\beta\in D_b$ such that $\alpha+\delta=\beta$ for some $\delta\in D_t$. If $c(\alpha),c(\beta)\notin c(D_m)$ and $c(\alpha)\neq c(\beta)$, then there is at most one diagonal $D_g$ that contains an element $\gamma$ with $c(\gamma)\notin c(D_m\cup \{\alpha,\beta\})$ such that $\alpha+\zeta=\gamma$ for some $\zeta\in D_z$ with $z \in\{a,b,t,m,g\}$.
    Moreover, if such a $D_g$ exists, then $z=a$.
\end{lem}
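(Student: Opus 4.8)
The plan is to determine the diagonal $D_g$ outright. Using the Landing Lemma together with the fact that $c$ is rainbow-free, we will show $D_g$ can only be $D_{2a-m}$ and can be reached only through $\zeta \in D_a$, so the set of admissible diagonals has size at most one. First, applying the Landing Lemma to $\alpha + \zeta = \gamma$ gives $g = a + z - m$, equivalently $z = g + m - a$, and applying it to $\alpha + \delta = \beta$ gives $b = a + t - m$. Feeding these into the hypothesis $z \in \{a,b,t,m,g\}$ leaves exactly five possibilities: $z = a$ (so $g = 2a-m$), $z = b$ (so $g = a+b-m$), $z = t$ (so $g = b$), $z = m$ (so $g = a$), or $z = g$ (so $a = m$). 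The case $z = g$ is impossible since $c(\alpha) \notin c(D_m)$ forces $a \neq m$. If $z = m$, then $\zeta \in D_m$, so $c(\zeta) \in c(D_m)$; since $\{\alpha,\zeta,\gamma\}$ is a solution, rainbow-freeness makes two of its colors coincide, and as $c(\gamma) \neq c(\alpha)$ while $c(\gamma) \notin c(D_m) \ni c(\zeta)$, this forces $c(\alpha) = c(\zeta) \in c(D_m)$, a contradiction.

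Next I eliminate $z = t$. Here $g = b$, so $\gamma$ and $\beta$ both lie on $D_b$; they are distinct because $c(\gamma) \neq c(\beta)$, and two distinct elements of a single diagonal differ in both coordinates by the same nonzero amount, so exactly one of $\gamma - \beta$, $\beta - \gamma$ lies in $\mn$. That difference equals $\zeta - \delta$ or $\delta - \zeta$ (whichever is coordinatewise positive), and since $\zeta,\delta \in D_t$ the Landing Lemma places it in $D_m$; call it $\mu$. Then, oriented correctly, $\{\beta,\mu,\gamma\}$ is a solution with $c(\mu) \in c(D_m)$, hence $c(\mu) \neq c(\gamma)$, and together with $c(\beta) \neq c(\gamma)$ rainbow-freeness forces $c(\beta) = c(\mu) \in c(D_m)$, contradicting $c(\beta) \notin c(D_m)$.

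The remaining case $z = b$ is the crux and requires a preliminary step to identify $c(\zeta)$. Now $\zeta \in D_b$. If $\zeta = \beta$, then $\gamma = \alpha + \beta$ and $\{\alpha,\beta,\gamma\}$ is a solution with three pairwise distinct colors by hypothesis, i.e.\ a rainbow solution, a contradiction. So $\zeta \neq \beta$, and both lie on $D_b$. Since $\{\alpha,\zeta,\gamma\}$ is a solution with $c(\alpha) \neq c(\gamma)$, rainbow-freeness gives $c(\zeta) \in \{c(\alpha),c(\gamma)\}$; in particular $c(\zeta) \notin c(D_m)$. As before, the coordinatewise positive one of $\zeta - \beta$, $\beta - \zeta$ lands in $D_m$ (a difference of two elements of $D_b$); calling it $\mu$, the solution $\{\zeta,\mu,\beta\}$ has $c(\mu) \in c(D_m)$, so $c(\mu)$ differs from both $c(\zeta)$ and $c(\beta)$, whence $c(\zeta) = c(\beta)$; but then $c(\beta) \in \{c(\alpha),c(\gamma)\}$, contradicting $c(\alpha) \neq c(\beta)$ and $c(\gamma) \neq c(\beta)$.

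With $z \in \{b,t,m,g\}$ all ruled out, only $z = a$ remains, which forces $g = 2a-m$. Hence there is at most one admissible diagonal $D_g$, and whenever one exists, necessarily $z = a$; note that no existence claim is needed, since ``at most one'' permits zero. I expect the main obstacle to be the $z = b$ case: the ``solution through $D_m$'' device that dispatches $z = t$ applies there only after one has independently excluded $\zeta = \beta$ and deduced $c(\zeta) \in \{c(\alpha),c(\gamma)\}$, which is the least automatic part of the argument.
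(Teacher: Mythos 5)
Your proof is correct and follows essentially the same route as the paper: apply the Landing Lemma (Lemma~\ref{lem:diagsum}) to pin down $g=a+z-m$ and $b=a+t-m$, eliminate $z\in\{g,m,t,b\}$ using rainbow-freeness, and conclude $z=a$ and $g=2a-m$, which gives uniqueness. The only difference is presentational: where the paper rules out $z=t$ and $z=b$ by citing Lemma~\ref{onedistinctcolorlem} (at most one color off the main diagonal per off-diagonal), you re-derive that fact inline by exhibiting the element $\mu\in D_m$ and a rainbow solution, handling the degenerate sub-case $\zeta=\beta$ separately.
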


\begin{proof}
    Assume there exist diagonals $D_g$ and $D_{g'}$ such that for some $\gamma\in D_g$ and $\gamma'\in D_{g'}$, $c(\gamma),c(\gamma')\notin c(D_m\cup \{\alpha,\beta\})$ and there is a $\zeta\in D_z$ and $\zeta'\in D_{z'}$ such that $\alpha+\zeta = \gamma$ and $\alpha+\zeta'=\gamma'$ with $z\in\{a,b,t,m,g\}$ and $z'\in\{a,b,t,m,g'\}$.
    
    Since $a\neq m$, it follows that $z\neq g$ and $z'\neq g'$, so $z,z' \in \{a,b,t,m\}$. Additionally,  $c(\gamma)\neq c(\alpha)$ and $ c(\gamma')\neq c(\alpha)$ imply that $c(\zeta)\in \{c(\alpha),c(\gamma)\}$ and $c(\zeta')\in \{c(\alpha),c(\gamma')\}$.  Thus, $z,z' \notin \{m,b\}$.  Since $c(\gamma), c(\gamma') \notin c(D_m \cup \{\alpha, \beta\})$, Lemma \ref{onedistinctcolorlem} implies $g \neq b$ and $g' \neq b$. Thus, by Lemma \ref{lem:diagsum}, $z\neq t$ and $z' \neq t$. So, it must be the case that $z=z'=a$.  Finally, Lemma \ref{lem:diagsum} gives that $g=g'=2a-m$, that is $D_g$ is unique.
\end{proof}

\begin{lem}\label{jump with beta}
    Let $c$ be an exact, rainbow free $(m+n+1)$-coloring of $[m]\times[n]$ for $eq$ with $3\le m \le n$, and suppose there is a jump from $\alpha\in D_a$ to $\beta\in D_b$ such that $\alpha+\delta=\beta$ for some $\delta\in D_t$. If $c(\alpha),c(\beta)\notin c(D_m)$ and $c(\alpha)\neq c(\beta)$, then there is at most one diagonal $D_g$ that contains an element $\gamma$ with $c(\gamma)\notin c(D_m\cup \{\alpha,\beta\})$ such that $\gamma+\zeta=\beta$ for some $\zeta\in D_z$ with $z \in\{a,b,t,m,g\}$.
    Moreover, if such a $D_g$ exists, then $z=g$.
\end{lem}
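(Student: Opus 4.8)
The statement is the ``mirror image'' of Lemma \ref{jump with alpha}: we now analyze iterated jumps that end at $\beta$ rather than start at $\alpha$, so the proof should run parallel to that of Lemma \ref{jump with alpha}, with the roles of $\alpha$ and $\beta$ essentially swapped. The plan is to suppose there exist two such diagonals $D_g$ and $D_{g'}$, with witnesses $\gamma \in D_g$, $\gamma' \in D_{g'}$ having colors outside $c(D_m \cup \{\alpha,\beta\})$, and with $\zeta \in D_z$, $\zeta' \in D_{z'}$ satisfying $\gamma + \zeta = \beta$ and $\gamma' + \zeta' = \beta$ where $z \in \{a,b,t,m,g\}$ and $z' \in \{a,b,t,m,g'\}$. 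The goal is to eliminate all possibilities except $z = g$ (hence $z' = g'$), and then use the Landing Lemma to pin down $g$ uniquely.

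First I would knock out the easy cases. Since $\gamma + \zeta = \beta$ with $\beta \in D_b$ and $b \neq m$, the Landing Lemma forces $g + z - m = b$, so $z \neq b$ (else $g = m$, contradicting $c(\gamma) \notin c(D_m)$ via Lemma \ref{onedistinctcolorlem} — more carefully, $g = m$ would put $\gamma \in D_m$) and similarly $z' \neq b$. Next, the rainbow-freeness of $c$ applied to the solution $\{\gamma, \zeta, \beta\}$ gives $c(\zeta) \in \{c(\gamma), c(\beta)\}$; since $c(\gamma) \neq c(\beta)$ and $c(\gamma) \notin c(D_m)$, we cannot have $z = m$, so $z \neq m$ and likewise $z' \neq m$. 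To rule out $z = a$: if $z = a$ then $\zeta \in D_a$, and since $c(\zeta) \in \{c(\gamma), c(\beta)\}$ with neither color in $c(D_m)$, the diagonal $D_a$ would contain a color ($c(\zeta)$) possibly equal to $c(\beta)$; but $c(\zeta) \neq c(\alpha)$ forces $c(\zeta) = c(\beta)$ wait — I need to be careful. Actually $z = a$ means $\gamma + \zeta = \beta$ with $\zeta, \alpha \in D_a$; then the Landing Lemma gives $g = b - a + m = 2b - \ldots$ — I should instead use that $c(\zeta) \in \{c(\gamma), c(\beta)\}$ and that $D_a$ already contains $c(\alpha) \notin c(D_m)$, and invoke Lemma \ref{onedistinctcolorlem} to conclude $c(\zeta) \in c(D_m) \cup \{c(\alpha)\}$, forcing $c(\zeta) = c(\alpha)$, hence $c(\gamma) = c(\alpha)$ or $c(\beta) = c(\alpha)$, both contradictions. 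Finally, $z = t$: then $\zeta \in D_t$, which contains $c(\delta) \in \{c(\alpha), c(\beta)\}$; an analogous Lemma \ref{onedistinctcolorlem} argument shows $c(\zeta)$ must coincide with $c(\delta)$, hence with $c(\alpha)$ or $c(\beta)$, again impossible since $c(\gamma), c(\gamma')$ avoid both.

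Having forced $z = z' = g$ (and $z' = g'$), the Landing Lemma applied to $\gamma + \zeta = \beta$ with $\gamma, \zeta$ both in $D_g$ gives $2g - m = b$, i.e. $g = (b+m)/2$, and identically $g' = (b+m)/2$, so $D_g = D_{g'}$ is unique. This is exactly the ``moreover'' conclusion $z = g$. The main obstacle, as in Lemma \ref{jump with alpha}, is making the $z \in \{a, t\}$ eliminations airtight: one must carefully distinguish the color of $\zeta$ from the colors already living on $D_a$ (namely $c(\alpha)$) and on $D_t$ (namely $c(\delta)$), and correctly apply Lemma \ref{onedistinctcolorlem}, which bounds $|c(D_x) \setminus c(D_m)| \le 1$, to squeeze out a contradiction; the bookkeeping about which diagonal ``contributes'' which color is where a careless argument could slip. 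Everything else is a routine diagonal-index computation via the Landing Lemma, mirroring the proof of Lemma \ref{jump with alpha} with $\beta$ in place of $\alpha$.
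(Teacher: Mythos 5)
Your eliminations of $z=m$, $z=b$, and $z=a$, and your closing step (once $z=g$ and $z'=g'$, the Landing Lemma gives $b=2g-m=2g'-m$, so $g=g'$) all match the paper's proof and are sound. The genuine gap is in your elimination of $z=t$. You argue that $\zeta\in D_t$ together with Lemma \ref{onedistinctcolorlem} forces $c(\zeta)=c(\delta)\in\{c(\alpha),c(\beta)\}$, and that this is ``impossible since $c(\gamma),c(\gamma')$ avoid both.'' But rainbow-freeness only gives $c(\zeta)\in\{c(\gamma),c(\beta)\}$, and the case $c(\delta)=c(\beta)$ with $c(\zeta)=c(\beta)$ is perfectly consistent with everything you have used: the unique off-main color on $D_t$ is then $c(\beta)$, Lemma \ref{onedistinctcolorlem} is satisfied, and no color clash occurs. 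So the color bookkeeping alone does not kill $z=t$.

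The correct way to close this case --- and the one the paper uses --- is positional rather than chromatic. First note $g\neq a$: since $c(\gamma)\notin c(D_m\cup\{\alpha,\beta\})$ and $c(\alpha)\notin c(D_m)$ are two distinct colors outside $c(D_m)$, Lemma \ref{onedistinctcolorlem} forbids them from both lying on $D_a$. Now if $z=t$, the Landing Lemma (Lemma \ref{lem:diagsum}) applied to $\gamma+\zeta=\beta$ gives $g=b-t+m$, while $\alpha+\delta=\beta$ gives $a=b-t+m$; hence $g=a$, a contradiction. With that repair your argument coincides with the paper's proof; without it, the step as written would fail exactly when $c(\delta)=c(\beta)$.
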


\begin{proof}
    Assume there exist diagonals $D_g$ and $D_{g'}$ such that for some $\gamma\in D_g$ and $\gamma'\in D_{g'}$, $c(\gamma),c(\gamma')\notin c(D_m\cup \{\alpha,\beta\})$ and there is a $\zeta\in D_z$ and $\zeta'\in D_{z'}$ such that $\gamma+\zeta = \beta$ and $\gamma'+\zeta'=\beta$ with $z\in\{a,b,t,m,g\}$ and $z'\in\{a,b,t,m,g'\}$.

    Since $c(\gamma)\neq c(\beta)$ and $c(\gamma')\neq c(\beta)$, it follows that $c(\zeta)\in \{c(\beta),c(\gamma)\}$ and $c(\zeta')\in \{c(\beta),c(\gamma')\}$.  Thus, $z,z' \notin \{a,m\}$.  Also, $g\neq m$ and $g'\neq m$ imply that $z\neq b$ and $z'\neq b$.  Since $c(\gamma), c(\gamma') \notin c(D_m \cup \{\alpha, \beta\})$, Lemma \ref{onedistinctcolorlem} implies $g\neq a$ and $g' \neq a$. Thus, by Lemma \ref{lem:diagsum}, $z \neq t$ and $z'\neq t$.  Therefore, $z=g$ and $z'= g'$.  Now, by Lemma \ref{lem:diagsum}, $b = z+g-m = 2g-m$ and $b = z'+g'-m = 2g'-m$. So, $g = g'$, showing $g$ is unique.
\end{proof}

\begin{lem}\label{Glem}
    Let $c$ be an exact, rainbow free $(m+n+1)$-coloring of $[m]\times[n]$ for $eq$ with $3\le m \le n$ and suppose there is an element $\alpha\in D_a$ and an element $\beta\in D_b$ such that $c(\alpha)\neq c(\beta)$, and $c(\alpha),c(\beta)\notin c(D_m)$. Let
    \[S' \subseteq \{D_x ~|~ c(D_x)\backslash c(D_m\cup D_a \cup D_b) \neq\emptyset\} \] and \[G := \{ D_{x}\in S'~|~ \text{if $c(D_x)\backslash c(D_m) = c(D_y)\backslash c(D_m)$ for some $D_y\in S'$, then $x \le y$} \}.\] If $D_{g_i}\in G$ and $\gamma_i\in D_{g_i}$ with $c(\gamma_i)\notin c(D_m\cup\{\alpha,\beta\})$ and there is a corresponding $\zeta_i\in D_{z_i}$ such that either $\alpha+\zeta_i=\gamma_i$ or $\gamma_i+\zeta_i =\beta$, then $D_{z_i}\notin G\setminus \{D_{g_i}\}$. Moreover, if $i\neq j$, then $z_i\neq z_j$. 
\end{lem}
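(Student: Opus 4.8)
The plan is to handle the two assertions in turn, in each case combining Lemma~\ref{onedistinctcolorlem} (each off-diagonal carries at most one color outside $c(D_m)$), the Landing Lemma~\ref{lem:diagsum}, and the minimality built into the definition of $G$. First I would record the standing facts I expect to reuse: $c(\alpha),c(\beta),c(\gamma_i)\notin c(D_m)$ and $c(\gamma_i)\notin\{c(\alpha),c(\beta)\}$ for every relevant $i$; since $D_{g_i}\in S'$ we have $g_i\neq m$, so $c(D_{g_i})\setminus c(D_m)=\{c(\gamma_i)\}$ by Lemma~\ref{onedistinctcolorlem}; and for $i\neq j$, where $D_{g_i}\neq D_{g_j}$, the colors $c(\gamma_i),c(\gamma_j)$ are distinct, because any two distinct members of $G$ have distinct sets $c(D_x)\setminus c(D_m)$ (apply the defining condition of $G$ to each of them in turn).

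For the first assertion I would argue by contradiction: assume $D_{z_i}\in G$ with $D_{z_i}\neq D_{g_i}$. Then $D_{z_i}\in S'$ and $z_i\neq m$ (as $D_m\notin S'$), so Lemma~\ref{onedistinctcolorlem} gives $c(D_{z_i})\setminus c(D_m)=\{c'\}$ for a single color $c'\notin c(D_m\cup D_a\cup D_b)$; in particular $c'\notin\{c(\alpha),c(\beta)\}$. Rainbow-freeness applied to the non-rainbow solution $\{\alpha,\zeta_i,\gamma_i\}$ (resp.\ $\{\gamma_i,\zeta_i,\beta\}$), together with $c(\alpha)\neq c(\gamma_i)$ (resp.\ $c(\beta)\neq c(\gamma_i)$), forces $c(\zeta_i)\in\{c(\alpha),c(\gamma_i)\}$ (resp.\ $\{c(\gamma_i),c(\beta)\}$); either way $c(\zeta_i)\notin c(D_m)$, hence $c(\zeta_i)=c'$, and since $c'\notin\{c(\alpha),c(\beta)\}$ this means $c'=c(\gamma_i)$. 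Thus $c(D_{z_i})\setminus c(D_m)=\{c(\gamma_i)\}=c(D_{g_i})\setminus c(D_m)$, and since $D_{z_i},D_{g_i}\in G\subseteq S'$, the minimality clause forces $g_i\le z_i$ and $z_i\le g_i$, i.e.\ $D_{z_i}=D_{g_i}$, a contradiction.

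For the second assertion I would again argue by contradiction: suppose $i\neq j$ but $z_i=z_j=:z$. As above, $c(\zeta_i),c(\zeta_j)\notin c(D_m)$, so $z\neq m$, and since $\zeta_i,\zeta_j\in D_z$, Lemma~\ref{onedistinctcolorlem} yields $c(\zeta_i)=c(\zeta_j)=:c^*$. By the Landing Lemma, $z=g_i-a+m$ if the $i$-th relation is $\alpha+\zeta_i=\gamma_i$ and $z=b-g_i+m$ if it is $\gamma_i+\zeta_i=\beta$, and likewise for $j$; so if the two relations are of the same type then $g_i=g_j$, contradicting $D_{g_i}\neq D_{g_j}$. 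In the remaining (opposite-type) case, say $\alpha+\zeta_i=\gamma_i$ and $\gamma_j+\zeta_j=\beta$, we get $c^*\in\{c(\alpha),c(\gamma_i)\}\cap\{c(\gamma_j),c(\beta)\}$, which is empty since $c(\alpha)\neq c(\gamma_j)$, $c(\alpha)\neq c(\beta)$, $c(\gamma_i)\neq c(\gamma_j)$, and $c(\gamma_i)\neq c(\beta)$; this contradiction finishes the argument.

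The main obstacle is the color bookkeeping rather than anything geometric: the two key realizations are that sharing an off-diagonal forces two $\zeta$-elements to share their unique non-main color, and that membership in $G$ pins each contributing diagonal to a single non-main color which is distinct across the members of $G$. Once these are in hand, the geometric identities from the Landing Lemma dispose of the same-type cases outright, and the opposite-type subcase of the second assertion — the one place where the contradiction is purely chromatic — is the only step that needs real care.
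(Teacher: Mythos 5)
Your proposal is correct and follows essentially the same route as the paper: both parts rest on Lemma~\ref{onedistinctcolorlem}, the rainbow-freeness constraint $c(\zeta_i)\in\{c(\alpha),c(\beta),c(\gamma_i)\}$, the Landing Lemma for the same-type case of the second assertion, and the minimality clause in the definition of $G$ (which the paper uses implicitly when it asserts $c(\rho)\neq c(\gamma_i)$, and which you invoke explicitly). The only difference is cosmetic: in the first assertion the paper derives two distinct non-main colors in $D_{z_i}$ to contradict Lemma~\ref{onedistinctcolorlem}, while you push one step further to force $z_i=g_i$ via minimality — the same ingredients in a slightly different order.
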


\begin{proof}
      For the sake of contradiction, suppose $D_{z_i}\in G\setminus \{D_{g_i}\}$. By definition of $G$ there is a $\rho\in D_{z_i}$ such that $c(\rho)\notin c(D_m\cup \{\alpha,\beta\})$. Since $z_i\neq g_i$, it follows that $c(\rho)\neq c(\gamma_i)$.  Further, $\alpha+\zeta_i=\gamma_i$ or $\gamma_i+\zeta_i = \beta$ implies  $c(\zeta_i)\in c(\{\alpha,\gamma_i,\beta\})$. Thus, $c(\zeta_i)\neq c(\rho)$, and $c(\zeta_i),c(\rho)\notin c(D_m)$, which contradicts Lemma \ref{onedistinctcolorlem}. Therefore, $D_{zi}\notin G\setminus\{D_{gi}\}$.
      
      Let $i\neq j$, $\gamma_j\in D_{g_j}$ with $c(\gamma_j) \notin c(D_m\cup \{\alpha,\beta\})$ have corresponding $\zeta_j\in D_{z_j}$.  If $\alpha +\zeta_i = \gamma_i$ and $\alpha +\zeta_j = \gamma_j$, or $\gamma_i + \zeta_i = \beta$ and $\gamma_j + \zeta_j = \beta$, then $g_i\neq g_j$ and Lemma \ref{lem:diagsum} imply $z_i\neq z_j$.  On the other hand, without loss of generality, if $\alpha +\zeta_i = \gamma_i$ and $\gamma_j +\zeta_j = \beta$, then $c(\zeta_i)\in \{c(\gamma_i),c(\alpha)\}$ and $c(\zeta_j)\in \{c(\gamma_j),c(\beta)\}$.  This implies $c(\zeta_i)\neq c(\zeta_j)$ so Lemma \ref{onedistinctcolorlem} implies $z_i\neq z_j$.
\end{proof}

\begin{prop}\label{upperboundjumps}
Let $c$ be an exact, rainbow-free $(m+n+1)$-coloring of $[m]\times[n]$ for $eq$ with $3\le m \le n$. If $\alpha+(d_1,d_2)= \beta$ corresponds to a jump from $\alpha$ to $\beta$ where $c(\alpha),c(\beta)\notin c(D_m)$ and $c(\alpha)\neq c(\beta)$, then $d_1+d_2\leq 2\log_2(m)+1$.  
\end{prop}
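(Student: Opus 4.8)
The plan is to assume such a jump exists and to bound $d_1+d_2$ by a double-counting argument on non-contributing off-diagonals. First I would record the structural facts. Since $c$ is rainbow-free and $c(\alpha)\neq c(\beta)$, the solution $\alpha+\delta=\beta$ forces $c(\delta)\in\{c(\alpha),c(\beta)\}$, so $c(\delta)\notin c(D_m)$ and $\delta$ lies in some off-diagonal $D_t$; and by Lemma \ref{onedistinctcolorlem}, $c(D_a\cup D_b)\setminus c(D_m)=\{c(\alpha),c(\beta)\}$. Writing $\ell=|c(D_m)|$, Lemma \ref{lem:ellminusthree} gives at most $\ell-3$ non-contributing off-diagonals (in fact exactly $\ell-3$, since the $m+n+1-\ell$ colors outside $c(D_m)$ are contributed by $m+n+1-\ell$ distinct off-diagonals), and Lemma \ref{lem:s2} gives $\ell\le\left\lfloor\log_2(m)+1\right\rfloor$.

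Next I would bring in the set $S$ of diagonals surrounded by the jump from (\ref{eqS}). Counting the integers strictly between $m+a_2-b_1$ and $m+b_2-a_1$ and deleting the at most three indices $a$, $b$, $m$ gives $|S|\ge d_1+d_2-4$, and by Lemmas \ref{lem:inside S} and \ref{lem:outside S} every element of every diagonal in $S$ makes a jump with $\alpha$ or with $\beta$. Among the diagonals of $S$, at most $\ell-3$ are non-contributing and at most two contribute $c(\alpha)$ or $c(\beta)$, so at least $|S|-\ell+1$ of them contribute a color outside $c(D_m\cup D_a\cup D_b)$. Taking $S'$ to be the set of all off-diagonals carrying a color outside $c(D_m\cup D_a\cup D_b)$ and $G$ the set of first occurrences from Lemma \ref{Glem}, these $\ge|S|-\ell+1$ diagonals all lie in $G\cap S$.

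For each such $D_g$ (which lies in $S$) I would fix $\gamma\in D_g$ with $c(\gamma)\notin c(D_m\cup D_a\cup D_b)$; it makes a jump with $\alpha$ or $\beta$, producing $\zeta\in D_z$ with $\alpha+\zeta=\gamma$ or $\gamma+\zeta=\beta$, and rainbow-freeness forces $c(\zeta)\in\{c(\alpha),c(\gamma),c(\beta)\}$. By Lemmas \ref{jump with alpha} and \ref{jump with beta}, all but at most two of these $D_g$ have $z\notin\{a,b,t,m,g\}$, so then $D_z$ is an off-diagonal different from $D_a,D_b,D_t,D_g$, does not lie in $G$, and by Lemma \ref{Glem} distinct $D_g$ give distinct $z$. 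Of the resulting diagonals $D_z$, all but at most two (those on which $c(\zeta)$ equals the color contributed by the unique contributor of $c(\alpha)$ or of $c(\beta)$) must be non-contributing, since $D_g$, not $D_z$, contributes $c(\gamma)$. So at least $|S|-\ell+1-O(1)\ge d_1+d_2-\ell-O(1)$ distinct non-contributing off-diagonals arise; comparing with the bound $\ell-3$ yields $d_1+d_2\le 2\ell+O(1)\le 2\left\lfloor\log_2(m)+1\right\rfloor+O(1)$.

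The delicate point is this final double-count: one must keep precise track of the few ``exceptional'' diagonals --- $D_a$, $D_b$, $D_t$, $D_m$, the two contributors of $c(\alpha)$ and $c(\beta)$, and the at most two exceptions supplied by Lemmas \ref{jump with alpha} and \ref{jump with beta} --- check that the second-jump diagonals $D_z$ really are non-contributing and pairwise distinct (which is exactly the content of Lemmas \ref{jump with alpha}, \ref{jump with beta}, \ref{Glem}), and then tighten the additive constants using $\ell\le\left\lfloor\log_2(m)+1\right\rfloor$ and the exact size of $S$ to land on the sharp bound $d_1+d_2\le 2\log_2(m)+1$ rather than one with a larger constant.
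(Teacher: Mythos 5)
Your structural skeleton is essentially the paper's: the set $S$ with $|S|\ge d_1+d_2-4$, the jumps supplied by Lemmas \ref{lem:inside S} and \ref{lem:outside S}, and the uniqueness/distinctness facts from Lemmas \ref{jump with alpha}, \ref{jump with beta} and \ref{Glem} are all invoked correctly, and your observation that the second-jump diagonals $D_z$ are non-contributing with at most two exceptions is valid. The genuine gap is the quantitative endgame, which is the actual content of the proposition. Carrying your count to the end: at least $|S|-(\ell-3)-2$ diagonals of $S$ lie in $G$, at most two of the associated $D_z$ are exceptional via Lemmas \ref{jump with alpha} and \ref{jump with beta}, and at most two more can be the contributors of $c(\alpha)$ or $c(\beta)$, so you produce at least $|S|-\ell-3\ge d_1+d_2-\ell-7$ non-contributing off-diagonals; comparing with the bound $\ell-3$ of Lemma \ref{lem:ellminusthree} yields only $d_1+d_2\le 2\ell+4\le 2\left\lfloor\log_2(m)+1\right\rfloor+4$, i.e. roughly $2\log_2(m)+6$, not the claimed $2\log_2(m)+1$. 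You explicitly defer the ``tightening of additive constants,'' but that tightening is the hard part, and it is not clear your accounting (charging everything against the $\ell-3$ bound on non-contributing diagonals) can ever reach the sharp constant. The slack is not cosmetic: Corollary \ref{m<10}, Lemma \ref{lem:2m-t=b} and Theorem \ref{thm:nojumpsever} depend on the consequences $m\le 10$ and $5\le d_1+d_2\le 7$, which fail under a bound of the form $2\log_2(m)+6$.

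The paper closes the count differently: instead of bounding non-contributing diagonals, it bounds the total number of colors. Splitting on whether $|G|\le\frac{|S|}{2}$ or $|G|\ge\frac{|S|+1}{2}$, and using that the diagonals housing the elements $\zeta_i$ are pairwise distinct, avoid $G\cup\{D_a,D_b,D_m\}$, and carry no color outside $c\left(D_m\cup D_a\cup D_b\cup\bigcup_{D_g\in G}D_g\right)$, it finds at least $\frac{|S|-1}{2}\ge\frac{d_1+d_2-5}{2}$ off-diagonals contributing no new color; hence at most $m+n-2-\frac{d_1+d_2-5}{2}$ colors avoid $c(D_m)$, so $|c(D_m)|\ge 3+\frac{d_1+d_2-5}{2}$, and Theorem \ref{FGRWWrainbowthm} gives exactly $d_1+d_2\le 2\log_2(m)+1$. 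Some argument of this finer type is what your sketch still needs.
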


\begin{proof} Let $\alpha=(a_1,a_2) \in D_a$, $\beta = (b_1,b_2)\in D_b$, and  $\delta = (d_1,d_2)\in D_t$.  Since $c$ is rainbow-free, $c(\delta)\in  \{c(\alpha),c(\beta)\}$. Define $S$ as in (\ref{eqS}).
    By Lemmas \ref{lem:outside S} and \ref{lem:inside S}, every element of every diagonal in $S$ has a solution with $\alpha$ or $\beta$.  In particular, if $\gamma$ is such an element, then there is either a jump from $\alpha$ to $\gamma$ or from $\gamma$ to $\beta$. 
    
     Further, define \[S':= \{D_x\in S ~|~ c(D_x)\backslash c(D_m\cup D_a \cup D_b)\neq \emptyset\},\] 
     
     and
   
    \[G := \{ D_{x}\in S'~|~ \text{if $c(D_x)\backslash c(D_m) = c(D_y)\backslash c(D_m)$ for some $D_y\in S'$, then $x \le y$} \}.\]

    Also, define \[T:= \{D_x\in [m]\times[n]~ |~ D_x\notin S\cup \{D_m,D_a,D_b\}\},\] and 
    \[T' := \left\{D_x\in T~|~ c(D_x)\not\subseteq c\left(D_m\cup D_a\cup D_b\cup \bigcup_{D_s \in S} D_s\right)\right\}.\]
    Accounting for the facts that $D_a,D_b,D_m\notin S$ and $\delta=\beta-\alpha$, \[|S|\geq (m+b_2-a_1)-(m+a_2-b_1)-1-3=d_1+d_2-4.\]
   
   Suppose $|G|=k$ and reindex $G=\{D_{g_1},\ldots, D_{g_k}\}$.  Note that for each $D_{g_i}\in G$ there exists $\gamma_i\in D_{g_i}$ with $c(\gamma_i)\notin c(D_m\cup \{\alpha,\beta\})$ and $c(\gamma_i) \neq c(\gamma_j)$ when $i\neq j$. 
   In addition, as $D_{g_i} \in S$, there exists a diagonal $D_{z_i}$ with $\zeta_i \in D_{z_i}$ such that $\alpha+\zeta_i=\gamma_i$  or $\gamma_i +\zeta_i= \beta$. 
   Define $Z := \{D_{z_i} ~|~ \zeta_i \in D_{z_i} \text{ for } 1\le i\le k\}$.  By Lemmas \ref{jump with alpha} and \ref{jump with beta}, at least $k-2$ diagonals $D_{z_i}$ of $Z$ are not in $\{D_a,D_b,D_t,D_m,D_{g_i}\}$. Define \[Z' := \begin{cases}
        \big\{D_{z_i} \in Z ~|~ D_{z_i} \notin \{D_a,D_b,D_t,D_m,D_{g_i}\}\big\} & \text{if $a=t$,} \\
        \big\{D_{z_i} \in Z ~|~ D_{z_i} \notin \{D_a,D_b,D_t,D_m,D_{g_i}\}\big\} \cup \{D_t\} & \text{if $a\neq t$.}
   \end{cases}\]
  
   For all $D_{z_i},D_{z_j}\in Z\cap Z'$, Lemma \ref{Glem} implies that $D_{z_i} \notin G$ and $z_i \neq z_j$ when $i\neq j$.  It follows that \[Z' = \begin{cases}
        \big\{D_{z_i} \in Z ~|~ D_{z_i} \notin \{D_a,D_b,D_t,D_m\} \cup G\big\} & \text{if $a=t$,} \\
        \big\{D_{z_i} \in Z ~|~ D_{z_i} \notin \{D_a,D_b,D_t,D_m\} \cup G\big\} \cup \{D_t\} & \text{if $a\neq t$.}
   \end{cases}\] Lemmas \ref{jump with alpha} and \ref{jump with beta} imply $k-1\leq|Z'|$.

    Note that $c(\zeta_i) \in \{c(\alpha),c(\beta),c(\gamma_i)\}$, so $c(\zeta_i)\notin c(D_m)$ and $|c(D_{z_i})\setminus c(D_m\cup D_a\cup D_b\cup D_{g_i})|=0$ by Lemma \ref{onedistinctcolorlem}. 
    Also, $c(\delta) \in \{c(\alpha),c(\beta)\}$, so $|c(D_t)\setminus c(D_m\cup D_a\cup D_b|=0$. 
    Thus, \[c\left(\bigcup_{D_x\in Z'}D_{x}\right)\subseteq c\left( D_m\cup D_a\cup D_b\cup \bigcup_{D_{g_i}\in G}D_{g_i} \right).\] Note that by definition, for all $D_{x}\in Z'$, $D_{x}\notin G\cup \{D_a, D_b,D_m\}$, so $Z'\subseteq(T\setminus T')\cup (S\setminus G)$.

     If $k\leq \frac{|S|}{2}$, then there are at least $\frac{|S|}{2}$ off-diagonals in $S\setminus G$. Therefore, there are at least $\frac{|S|}{2}$ off-diagonals in $(T\setminus T')\cup (S\setminus G)$.  If $k\ge\frac{|S|+1}{2}$, then $|Z'|\geq k-1\ge \frac{|S|-1}{2}$. Since $Z'\subseteq (T\setminus T')\cup (S\setminus G)$, there are at least $\frac{|S|-1}{2}$ off-diagonals in $(T\setminus T')\cup (S\setminus G)$.

      Recall, $|S| \geq  d_1+d_2-4$.  This implies there are at least \[\frac{d_1+d_2-5}{2}\] off-diagonals in $(T\setminus T')\cup (S\setminus G)$.  
    
     Because there are a total of $m+n-2$ off-diagonals and Lemma \ref{onedistinctcolorlem} implies that each off-diagonal can contain at most one color not in the main diagonal, there are at most  \[m+n-2-\frac{d_1+d_2-5}{2}\] colors in $c(\mn)\backslash c(D_m)$.  Therefore, there are at least \[m+n+1-\left(m+n-2-\frac{d_1+d_2-5}{2}\right) = 3+\frac{d_1+d_2-5}{2}\] colors in $c(D_m)$.
     By Theorem \ref{FGRWWrainbowthm}, \[|c(D_m)| \leq \log_2(m)+1,\] so \[3+\frac{d_1+d_2-5}{2}\leq \log_2(m)+1.\] Therefore, \[d_1+d_2\leq 2\log_2(m)+1.\] \end{proof}
   
   Combining the inequalities in Propositions \ref{upperleftlem} and \ref{upperboundjumps} gives the following Corollary.

\begin{cor}\label{m<10}
    Let $c$ be an exact, rainbow-free $(m+n+1)$-coloring of $[m]\times[n]$ for $eq$ with $m\leq n$. If $11\le m$ or $14\le n$, then there are no jumps between elements $\alpha$ and $\beta$ such that $c(\alpha),c(\beta)\notin c(D_m)$ and $c(\alpha)\neq c(\beta)$. Furthermore, when $8\leq m \leq 10$, it follows that $5\leq d_1 +  d_2 \leq 7$.
\end{cor}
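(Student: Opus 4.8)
The plan is to intersect the two bounds already in hand and read off the corollary. First I would record that a jump from $\alpha$ to $\beta$ comes with $\delta=(d_1,d_2)=\beta-\alpha$, and $\{\alpha,\delta,\beta\}$ is a solution; since $c$ is rainbow-free and $c(\alpha)\neq c(\beta)$, this forces $c(\delta)\in\{c(\alpha),c(\beta)\}$, so $c(\delta)\notin c(D_m)$. Hence Proposition~\ref{upperleftlem} applies to $\delta$; more precisely, inequality~(\ref{ineq3}) in its proof gives the sharper
\[\frac{2m+2n+9-6\left\lfloor\log_2(m)+1\right\rfloor}{4}\le d_1+d_2,\]
while Proposition~\ref{upperboundjumps} gives $d_1+d_2\le 2\log_2(m)+1$. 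Everything reduces to showing these two inequalities cannot both hold for an integer $d_1+d_2$ in the stated ranges, with the final ``furthermore'' clause being a direct substitution into the two bounds.

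For the case $11\le m$ I would use $n\ge m$ to reduce the lower bound to $\frac{4m+9-6\lfloor\log_2(m)+1\rfloor}{4}\le d_1+d_2\le 2\log_2(m)+1$, and then split on $\ell:=\lfloor\log_2 m\rfloor$. For $11\le m\le 15$ ($\ell=3$, so $\lfloor\log_2(m)+1\rfloor=4$) and $16\le m\le 31$ ($\ell=4$) a short estimate—evaluating the (increasing) lower bound at the smallest $m$ in the band and the (increasing) upper bound at the largest—shows the least integer meeting the lower bound exceeds the greatest integer meeting the upper bound; here integrality of $d_1+d_2$ is essential, since the real-valued bounds overlap slightly (for instance at $m=16$ the lower bound is $\tfrac{43}{4}=10.75$ and the upper bound is $9$). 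For $m\ge 32$ I would bound $\lfloor\log_2 m\rfloor\le\log_2 m$ in the lower bound, reducing the desired incompatibility to $4m>14\log_2(m)+1$; since $4m-14\log_2 m$ is increasing for $m\ge 6$ and already exceeds $1$ at $m=32$, this holds for all $m\ge 32$.

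For the case $14\le n$ I may assume $3\le m\le 10$ (the range $m\ge 11$ having just been handled), so I keep the $n$-dependent lower bound from~(\ref{ineq3}) and split on $m$ according to the value of $\lfloor\log_2(m)+1\rfloor$: $m=3$ (value $2$), $4\le m\le 7$ (value $3$), and $8\le m\le 10$ (value $4$). In each band, substituting $n\ge 14$ and the smallest $m$ in the band makes the lower bound strictly larger than $2\log_2(m)+1$ evaluated at the largest $m$ in the band, which is the upper bound, so no admissible $d_1+d_2$ exists; hence there are no such jumps. Finally, for the ``furthermore'' clause, assume $8\le m\le 10$ and that a jump exists. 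Using $n\ge m\ge 8$ in the $m\le n$ form of Proposition~\ref{upperleftlem} gives $d_1+d_2\ge\frac{4\cdot 8+9-24}{4}=\frac{17}{4}>4$, so $d_1+d_2\ge 5$; and $d_1+d_2\le 2\log_2(10)+1<8$ gives $d_1+d_2\le 7$.

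The arithmetic is entirely routine; the only places requiring care are the large-$m$ regime—where one must exploit that $m$ outgrows $\log_2 m$ rather than leaning on the crude bound $2\log_2 m+1<2(\ell+1)+1$—and the near-boundary values (notably $m=16$, and also $m=11$), where the argument genuinely needs the integrality of the jump distance and the exact value of $\log_2$ at small integers, not just the coarse floor estimates.
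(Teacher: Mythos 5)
Your overall strategy is the paper's own: observe that $c(\delta)\in\{c(\alpha),c(\beta)\}$ so $c(\delta)\notin c(D_m)$ and Proposition~\ref{upperleftlem} applies to the jump, intersect its lower bound (in the $n$-dependent form of Inequality~(\ref{ineq3})) with the upper bound of Proposition~\ref{upperboundjumps}, and exploit integrality of $d_1+d_2$; the paper states the resulting incompatibility forces $m\le 10$ and $n\le 13$ and reads off $5\le d_1+d_2\le 7$ for $8\le m\le 10$. Your explicit justification that Proposition~\ref{upperleftlem} is applicable is a detail the paper leaves implicit, and your ``furthermore'' computation is correct.

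However, two of your concrete verification steps fail as written. For the band $11\le m\le 15$, the lower bound at $m=11$ is $29/4=7.25$ while the upper bound at $m=15$ is $2\log_2 15+1\approx 8.81$, so the least integer meeting the lower bound ($8$) does \emph{not} exceed the greatest integer meeting the upper bound (also $8$); the endpoint comparison proves nothing here, and you must check each $m$: at $m=11$ the interval $[7.25,\,2\log_2 11+1\approx 7.92]$ contains no integer (this, not $m=16$, is the one place where integrality is genuinely needed --- at $m=16$ the lower bound $10.75$ already exceeds the upper bound $9$, so there is no ``slight overlap'' there), while for $12\le m\le 15$ the lower bound exceeds the upper bound outright (e.g.\ $8.25>8.17$ at $m=12$). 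Similarly, for $14\le n$ and the band $8\le m\le 10$, the lower bound at $(m,n)=(8,14)$ is $29/4=7.25$, which is \emph{not} larger than $2\log_2 10+1\approx 7.64$, so that endpoint comparison also fails; a per-value check does work ($7.25>7$ at $m=8$, $7.75>7.34$ at $m=9$, $8.25>7.64$ at $m=10$), or one can note that within this band the lower bound grows by $1/2$ per unit of $m$ while $2\log_2 m+1$ grows by less, so verifying both bounds at $m=8$ suffices. With these repairs your argument is correct and coincides with the paper's proof.
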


\bpf Assume there is a jump from $\alpha$ to $\beta$.  In particular, $\alpha + (d_1,d_2) = \beta$.  Then, the inequalities from Propositions \ref{upperleftlem} and \ref{upperboundjumps} must be satisfied, so

\begin{equation}\label{bound}\frac{4m+9-6\left\lfloor\log_2(m)+1\right\rfloor}{4} \le d_1 + d_2 \le 2\log_2(m)+1.\end{equation}

This can only happen if $m \le 10$.  Further, Inequality (\ref{ineq3}) implies $n\le 13$ and if $8\le m \le 10$, Inequality (\ref{bound}) implies $5 \le d_1 + d_2 \le 7$.
\epf

Lemma \ref{lem:2m-t=b} is used to generalize Corollary \ref{m<10} into Theorem \ref{thm:nojumpsever}.  The proof of Lemma \ref{lem:2m-t=b} uses analysis similar to the proof of Proposition \ref{upperboundjumps} but the additional constraint that $|c(D_m)| = 4$, deduced using Corollary \ref{m<10},  leads to deeper case analysis. 

\begin{lem}\label{lem:2m-t=b}
    Let $c$ be an exact, rainbow free $(m+n+1)$-coloring of $[m]\times[n]$ for $eq$ with $3\le m \le n$, and suppose there is a jump from $\alpha\in D_a$ to $\beta\in D_b$ such that $\alpha+\delta=\beta$ for some $\delta = (d_1,d_2) \in D_t$. If $c(\alpha),c(\beta)\notin c(D_m)$ and $c(\alpha)\neq c(\beta)$, then $2m-t=b$.
\end{lem}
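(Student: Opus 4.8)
First I would pin down the main diagonal. Since $c$ is rainbow-free, Theorem~\ref{thm:main3colors} gives $|c(D_m)| \ge 4$, while Lemma~\ref{lem:s2} gives $|c(D_m)| \le \lfloor \log_2 m + 1\rfloor$; together these force $m \ge 8$. Because there is a jump with distinctly coloured endpoints outside $c(D_m)$, Corollary~\ref{m<10} gives $m \le 10$, $n \le 13$, and $5 \le d_1 + d_2 \le 7$, and since $\lfloor \log_2 m + 1\rfloor = 4$ for $8 \le m \le 10$ we conclude $|c(D_m)| = 4$ exactly. Corollary~\ref{cor:2power} applied to $D_m$ then gives $4 s_2 \le m$, so $s_2 = 2$. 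Finally, since $c$ is an exact $(m+n+1)$-coloring and each off-diagonal carries at most one colour outside $c(D_m)$ by Lemma~\ref{onedistinctcolorlem}, a count (as in Lemma~\ref{lem:ellminusthree}, with $|c(D_m)| - 3 = 1$) shows that exactly one off-diagonal of $\mn$ fails to contribute a colour.

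\textbf{Step 2 (reformulate the goal).} By the Landing Lemma (Lemma~\ref{lem:diagsum}), $\beta \in D_{a+t-m}$, so $b = a + t - m$; hence $2m - t = b$ is equivalent to $a = 3m - 2t$, and also to the statement that $\beta + \delta$ lies (by index) on the main diagonal. Since $c$ is rainbow-free, $c(\delta) \in \{c(\alpha), c(\beta)\}$, so $c(\delta) \notin c(D_m)$ and in particular $t \ne m$; moreover $t \ne b$, since $t = b$ would force $\alpha \in D_m$. I would argue by contradiction, assuming $b \ne 2m - t$. Note that if $b = 2m-t$ held then $\beta+\delta$ could not lie in $\mn$ at all, since its colour would be forced simultaneously into $\{c(\alpha),c(\beta)\}$ and into $c(D_m)$; so the contradiction hypothesis says that $\beta+\delta$ is strictly off the main-diagonal index, and the target is to rule this out.

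\textbf{Step 3 (the count, then case analysis).} Following the proof of Proposition~\ref{upperboundjumps}, let $S$ be the set of diagonals around the jump defined in $(\ref{eqS})$; since $d_1 + d_2 \ge 5$ we have $|S| \ge d_1 + d_2 - 4 \ge 1$, and by Lemmas~\ref{lem:inside S} and~\ref{lem:outside S} every element of every diagonal of $S$ forms a solution with $\alpha$ or $\beta$. Running the bookkeeping of that proof — using Lemmas~\ref{jump with alpha}, \ref{jump with beta}, and~\ref{Glem} to control where the resulting summands land — gives $|c(D_m)| \ge 3 + (d_1 + d_2 - 5)/2$. With $|c(D_m)| = 4$ this pins $d_1 + d_2 \in \{5,6,7\}$, and when $d_1 + d_2 = 7$ it forces every inequality in the count to be an equality; in particular $|S| = d_1+d_2-4$ and the single non-contributing off-diagonal must be one of the specific diagonals $D_{2a-m}$ or $D_{(b+m)/2}$ singled out in Lemmas~\ref{jump with alpha} and~\ref{jump with beta}. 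I would then split on $c(\delta)$: if $c(\delta) = c(\alpha)$, there is also a jump from $\delta$ to $\beta$ (with jump vector $\alpha \in D_a$, since $\alpha$ has positive coordinates and $D_a \ne D_b$), so Corollary~\ref{m<10} applies again and yields $5 \le a_1 + a_2 \le 7$; combined with $\beta = \alpha + \delta$, $\beta_1 \le m \le 10$, $\beta_2 \le n \le 13$, $s_2 = 2$, and the tightness/location data above, the position of $D_t$ relative to $D_b$ is forced, and any placement with $b \ne 2m - t$ produces either a second non-contributing off-diagonal or an off-diagonal carrying two colours outside $c(D_m)$, contradicting Lemma~\ref{onedistinctcolorlem}. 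The case $c(\delta) = c(\beta)$ is handled by the analogous chain of forced jumps terminating at $\beta$.

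\textbf{Main obstacle.} The difficulty is exactly this last bookkeeping. In Proposition~\ref{upperboundjumps} a single inequality closes the argument, but here — especially for $d_1 + d_2 \in \{5,6\}$, where the count has slack — the relation $2m - t = b$ does not simply drop out; one must enumerate the few possibilities for $\delta$ (equivalently $t$) in the narrow ranges $8 \le m \le 10$, $5 \le d_1 + d_2 \le 7$, and for each trace the chain of forced jumps emanating from the diagonals of $S$ (and, when $c(\delta) = c(\alpha)$, the second jump $\delta \to \beta$) to locate the unique non-contributing diagonal. Keeping the two colour cases organized, and not double-counting diagonals that play several roles at once, is the principal technical burden.
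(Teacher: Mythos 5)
Your Step 1 reproduces the paper's opening reduction correctly ($8\le m\le 10$, $n\le 13$, $5\le d_1+d_2\le 7$, $|c(D_m)|=4$, and at most one non-contributing off-diagonal), and Step 2's reformulation via the Landing Lemma is fine. But the actual content of the lemma --- deriving $2m-t=b$ --- is not established. Your Step 3 re-runs the counting of Proposition \ref{upperboundjumps}, which only recovers the bound $d_1+d_2\le 2\log_2(m)+1$ you already have, and then defers the real work to an enumeration of placements of $\delta$ that you do not carry out and explicitly flag as the ``main obstacle.'' The claim that any placement with $b\neq 2m-t$ ``produces either a second non-contributing off-diagonal or an off-diagonal carrying two colours outside $c(D_m)$'' is exactly what needs proof, and knowing that there is (at most) one non-contributing off-diagonal gives you no handle on \emph{which} diagonal it is; your guesses that tightness at $d_1+d_2=7$ forces it to be $D_{2a-m}$ or $D_{(b+m)/2}$ are unsubstantiated and do not match how the argument closes.

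The missing idea is a mechanism tying $\delta$ to the main diagonal. The paper assumes $2m-t\neq b$ and considers $\varsigma=(s_4,s_4)-\delta$, which lies in $\mn$ because $d_1+d_2\le 7$ while $s_4\ge 2^2 s_2\ge 8$ (Corollary \ref{cor:2power}), and lies in $D_{2m-t}$ by the Landing Lemma. From $\delta+\varsigma=(s_4,s_4)$ one gets $c(\varsigma)\in\{c(\alpha),c(\beta),4\}$; if $c(\varsigma)=4$ a short sweep over the elements of $D_{2m-t}$ (using $m\le 10$, $n\le 13$, $s_4\ge 8$) shows $c(D_{2m-t})\subseteq c(D_m)$. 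Either way one of $D_a,D_b,D_{2m-t}$ is non-contributing, so by Lemma \ref{lem:ellminusthree} every $D_k$ with $k\notin\{m,a,b,2m-t\}$ contributes a colour distinct from $c(\alpha),c(\beta)$; since $c(D_t)\subseteq c(D_m)\cup\{c(\alpha),c(\beta)\}$ this forces $t\in\{m,a,b,2m-t\}$ and in fact $t=a$. Only then does the set $S$ enter: choosing $D_g\in S\setminus\{D_{2m-t}\}$ (which exists because $D_{a+b-(2m-t)}\in S$ when $D_{2m-t}\in S$) and applying Lemmas \ref{lem:inside S}, \ref{lem:outside S}, \ref{jump with alpha} and \ref{jump with beta} leaves only $z=2m-t$ with $\gamma+\zeta=\beta$, which forces $c(\beta)\in c(D_a)$ alongside $c(\alpha)$, contradicting Lemma \ref{onedistinctcolorlem}. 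Without the $\varsigma$ construction (or some substitute that pins down the non-contributing diagonal and forces $t=a$), your proposed case analysis has no way to get started, so as written the proof has a genuine gap.
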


\begin{proof}
    Corollary \ref{mainlessthan3} and Lemma \ref{lem:3colornojumps} imply $4\le |c(D_m)|$. Lemma \ref{lem:s2} and Corollary \ref{m<10} further restrict to the situation to $8 \le m \le 10$, $n \le 13$, $5 \le d_1 + d_2 \le 7$, and $|c(D_m)|=4$. For the sake of contradiction, assume $2m-t \neq b$. First, note that if $2m-t=a$, then subtracting $m-t$ from both sides gives $m = a+t-m$ and $a+t-m=b$ by Lemma \ref{lem:diagsum} implying $m=b$, a contradiction. Thus, $2m-t \notin \{a,b\}$.
    
    Notice $d_1 + d_2 \leq 7$ and $8 \leq s_4$ imply that $\varsigma = (s_4,s_4)-\delta \in \mn$ and more specifically, $\varsigma \in D_{2m-t}$ by Lemma \ref{lem:diagsum}. It will be shown that one of $D_a,D_b,D_{2m-t}$ is non-contributing. Note that $\delta + \varsigma = (s_4,s_4)$ implies $c(\varsigma) \in \{c(\delta),4\} \subset \{c(\alpha),c(\beta),4\}$. If $c(\varsigma) \in \{c(\alpha),c(\beta)\}$, then at least one of $D_a,D_b,D_{2m-t}$ is non-contributing. Otherwise, suppose $c(\varsigma)=4$. It will be shown that $D_{2m-t}$ is non-contributing, the desired result. So, let $\rho = (p_1,p_2) \in D_{2m-t}$. If $p_1<s_4-d_1$, then there exists some $k < s_4$ such that $\rho + (k,k) = \varsigma$ implying $c(\rho) \in c(D_m)$. Second, suppose $s_4-d_1<p_1<2s_4-d_1$. Then there exists some $k < s_4$ such that $(k,k) + \varsigma = \rho$ implying $c(\rho) \in c(D_m)$. Finally, suppose $2s_4-d_1 \leq p_1$ which means $2s_4-d_2 \leq p_2$. Notice that $p_1 \leq m \leq 10$ and that $8 \leq s_4$ by Corollary \ref{cor:2power}. So, $d_1 \geq 2s_4-p_1 \geq 16-10 \geq 6$. But then $d_1+d_2\leq 7$ implies $d_2 \leq 1$. So, $p_2 \geq 2s_4-d_2 \geq 16-1 = 15 > n$. Thus, no such $\rho$ exists proving that $c(D_{2m-t}) \subseteq c(D_m)$ implying that $D_{2m-t}$ is non-contributing.
    
    Thus, every diagonal $D_k$ with $k \notin \{m,a,b,2m-t\}$ must contribute a color distinct from $c(\alpha)$ and $c(\beta)$. Consider $D_t$. Since $\alpha + \delta = \beta$, it follows that $c(\delta) \in \{c(\alpha),c(\beta)\}$ and Lemma \ref{onedistinctcolorlem} implies $c(D_t) \subseteq c(D_m)\cup\{c(\alpha),c(\beta)\}$. So, $D_t$ can't contribute a color distinct from $c(\alpha)$ and $c(\beta)$. Thus, $t \in \{m,a,b,2m-t\}$. However, $a\neq b$ implies $t\neq m$ which implies $t\neq 2m-t$. Further $a\neq m$ implies $t\neq b$. So, $t=a$. 
    
    Define $S$ as in (\ref{eqS}) and note that \[|S| = 
    \begin{cases}
        d_1+d_2-3 & \text{if $m \notin  \{D_x \,|\, m+a_2-b_1 < x < m+b_2-a_1\}$}, \\
        d_1+d_2-4 & \text{if $m \in  \{D_x \,|\, m+a_2-b_1 < x < m+b_2-a_1\}$}.
    \end{cases}\] 
    
    Since $5 \leq d_1+d_2$, it follows that $1\leq |S|$. It is claimed that if $D_{2m-t} \notin S$, then there exists some $D_g\in S\backslash \{D_{2m-t}\}$.  So assume $D_{2m-t}\in S$. By definition of $S$, $m+a_2-b_1 < 2m-t < m+a_1 - b_2$ which implies $m+a_2-b_1 < a+b - (2m-t) < m+a_1 - b_2$.  Thus, $D_{a+b-(2m-t)}\in S\backslash\{ D_{2m-t}\}$ because if $a+b-(2m-t) \in \{a,b,m,2m-t\}$, it can be concluded that either $b = 2m-t$ or $a=m$, both of which are contradictions.  Thus, there exists some $D_g\in S$ such that $g\neq 2m-t$.

    Since $g \notin \{m,a,b,2m-t\}$, it follows that $D_g$ must contribute $c(\gamma)$ for some $\gamma \in D_g$. By Lemmas \ref{lem:inside S} and \ref{lem:outside S}, there exists a $\zeta \in D_z$ such that $\alpha + \zeta = \gamma$ or $\gamma + \zeta = \beta$. In either case, $c(\zeta) \in \{c(\alpha), c(\beta), c(\gamma)\}$ implying that if $z \notin \{m,a,b,2m-t,g\}$, then $z$ is non-contributing, a contradiction. Lemmas \ref{jump with alpha} and \ref{jump with beta} imply that 
    \begin{itemize}
        \item $z=a$ and $\alpha + \zeta = \gamma$, 
        \item $z=g$ and $\gamma + \zeta = \beta$, or
        \item $z=2m-t$.
    \end{itemize}
    If $z=a$, then $z=t$ and Lemma \ref{lem:diagsum} implies $g = a+z-m = a+t-m = b$, contradicting that $D_g \in S$. If $z=g$, then Lemma \ref{lem:diagsum} implies $b=z+g-m=2g-m$. Additionally, since $t=a$, it follows that $b=t+a-m=2a-m$. So, $g=a$, again contradicting that $D_g \in S$. Thus, $z=2m-t$. If $\alpha + \zeta = \gamma$, then Lemma \ref{lem:diagsum} implies $g = a+z-m = a+(2m-t)-m = a+(2m-a)-m = m$, contradicting that $D_g \in S$. So, $\gamma + \zeta = \beta$. This means $c(\zeta) \in \{c(\beta),c(\gamma)\}$. And since $c(D_{2m-t}) \subseteq c(D_m) \cup \{c(\alpha),c(\beta)\}$, it follows that $c(\zeta)=c(\beta)$. Recall that $c(\beta) \in c(D_{2m-t})$ only if $c(\delta) = c(\beta)$. Since $\delta \in D_a$, $c(\alpha),c(\beta)\in c(D_a)$, contradicting Lemma \ref{onedistinctcolorlem}. Thus, it has finally been shown that $b=2m-t$.
\end{proof}

Theorem \ref{thm:nojumpsever} says that there are no jumps between distinctly colored elements whose colors do not appear in the main diagonal. First it is shown that $|D_m| = 4$ and one of $D_t,D_a,D_b$ is non-contributing, so by Lemma \ref{lem:ellminusthree}, all other off-diagonals are contributing. Next, some $\gamma\in D_g$ and $\zeta \in D_z$ are found such that $\alpha + \zeta = \gamma$ or $\gamma+\zeta = \beta$. Since $D_z$ must be contributing, $z\in \{g,a,b,t\}$, and Lemma \ref{jump with alpha} and \ref{jump with beta} imply that $\alpha+\zeta = \gamma$ and $z = a$ or $\gamma + \zeta = \beta$ and $z = g$. In either case, a contradiction is found.

\begin{thm}\label{thm:nojumpsever}
    Let $c$ be an exact, rainbow free $(m+n+1)$-coloring of $[m]\times[n]$ for $eq$ with $3\le m \le n$, and suppose there is a jump from $\alpha\in D_a$ to $\beta\in D_b$ such that $\alpha+\delta=\beta$ for some $\delta = (d_1,d_2) \in D_t$. Then $c(\alpha)\in c(D_m)$ or $c(\beta)\in c(D_m)$ or $c(\alpha) = c(\beta)$.
\end{thm}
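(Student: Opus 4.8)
The plan is a proof by contradiction, so assume $c(\alpha),c(\beta)\notin c(D_m)$ and $c(\alpha)\neq c(\beta)$ and aim for an impossibility. The first task is to pin down every parameter. Corollary \ref{mainlessthan3} gives $|c(D_m)|\ge 3$, and Lemma \ref{lem:3colornojumps} rules out $|c(D_m)|=3$ once such a jump is present, so $|c(D_m)|\ge 4$; since $|c(D_m)|\le\lfloor\log_2 m+1\rfloor$ by Theorem \ref{FGRWWrainbowthm} while $m\le 10$ and $n\le 13$ by Corollary \ref{m<10}, in fact $|c(D_m)|=4$. Then Corollary \ref{cor:2power} forces $4s_2\le m$, hence $s_2=2$ and $m\ge 8$, Lemma \ref{FGRWWrainbowlem} gives $s_4\ge 8$, and Corollary \ref{m<10} gives $5\le d_1+d_2\le 7$. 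Finally Lemma \ref{lem:2m-t=b} gives $b=2m-t$, which together with $b=a+t-m$ from the Landing Lemma (Lemma \ref{lem:diagsum}) applied to $\alpha+\delta=\beta$ yields $a=3m-2t$; in particular $D_a$, $D_b$, $D_t$ are three pairwise distinct off-diagonals, since any coincidence among them, or with $D_m$, forces $a=m$.

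Next I would locate the one non-contributing off-diagonal. Rainbow-freeness forces $c(\delta)\in\{c(\alpha),c(\beta)\}$, a color not in $c(D_m)$; say $c(\delta)=c(\alpha)$, the other case being symmetric. By Lemma \ref{onedistinctcolorlem} this is the unique non-$c(D_m)$ color of both $D_t$ and $D_a$, so whichever of $D_t,D_a$ has the larger index contributes nothing. Thus one of $D_a,D_b,D_t$ is non-contributing, and by Lemma \ref{lem:ellminusthree} (using $|c(D_m)|=4$) it is the \emph{only} non-contributing off-diagonal; every other off-diagonal contributes exactly one color.

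Now I would extract a solution. Let $S$ be the set of diagonals from (\ref{eqS}); then $|S|\ge d_1+d_2-4\ge 1$, and since $S$ is invariant under the reflection $x\mapsto a+b-x$ and the relations $b=2m-t$, $a=3m-2t$ give $a+b-t\notin\{a,b,m,t\}$, one in fact has $|S|\ge 2$ whenever $D_t\in S$. Hence $S$ contains a contributing diagonal, and—selecting a color not already present in $D_a\cup D_b$, exactly as in the proof of Proposition \ref{upperboundjumps}—we may take $D_g\in S$ contributing some $c(\gamma)\notin c(D_m\cup\{\alpha,\beta\})$ with $\gamma\in D_g$. By Lemmas \ref{lem:inside S} and \ref{lem:outside S} there is a jump from $\alpha$ to $\gamma$ or from $\gamma$ to $\beta$, producing $\zeta\in D_z$ with $\alpha+\zeta=\gamma$ or $\gamma+\zeta=\beta$ and $c(\zeta)\in\{c(\alpha),c(\beta),c(\gamma)\}$, so $c(\zeta)\notin c(D_m)$. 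If $z\notin\{a,b,t,m,g\}$, then $D_z$ is a contributing off-diagonal whose unique non-$c(D_m)$ color must be $c(\zeta)$, forcing $D_z$ to re-contribute a color already contributed by one of $D_a,D_b,D_g$—impossible; hence $z\in\{a,b,t,m,g\}$, and Lemmas \ref{jump with alpha} and \ref{jump with beta} then force either $\alpha+\zeta=\gamma$ with $z=a$, or $\gamma+\zeta=\beta$ with $z=g$.

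The endgame is short. If $\alpha+\zeta=\gamma$ with $\zeta\in D_a$, this is itself a jump between distinctly colored elements avoiding $c(D_m)$ (since $c(\gamma)\neq c(\alpha)$), so Lemma \ref{lem:2m-t=b} gives $g=2m-a$ while the Landing Lemma gives $g=2a-m$, forcing $a=m$, a contradiction. If $\gamma+\zeta=\beta$ with $\zeta\in D_g$, then likewise Lemma \ref{lem:2m-t=b} gives $b=2m-g$ and the Landing Lemma gives $b=2g-m$, forcing $g=m$, contradicting $D_g\in S$. I expect the delicate point to be the third paragraph: guaranteeing a contributing diagonal of $S$ that contributes a \emph{new} color (neither $c(\alpha)$ nor $c(\beta)$) in the extremal cases, where $|S|$ can be as small as $1$, so the constraints $b=2m-t$ and $a=3m-2t$ must be used in tandem with the color-counting from Proposition \ref{upperboundjumps}; the remainder is careful bookkeeping of diagonal indices through the Landing Lemma.
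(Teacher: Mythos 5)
Your proposal is correct, and it diverges from the paper in a way worth noting. The setup coincides with the paper's: the same reduction to $8\le m\le 10$, $n\le 13$, $5\le d_1+d_2\le 7$, $|c(D_m)|=4$; the same use of Lemma \ref{lem:2m-t=b} to get $b=2m-t$ (and hence $a=3m-2t$, pairwise distinctness of $a,b,t$); the same identification of exactly one non-contributing off-diagonal among $D_a,D_b,D_t$ via Lemmas \ref{onedistinctcolorlem} and \ref{lem:ellminusthree}; and the same dichotomy from Lemmas \ref{jump with alpha} and \ref{jump with beta} (either $\alpha+\zeta=\gamma$ with $z=a$, or $\gamma+\zeta=\beta$ with $z=g$). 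Two steps are genuinely different. First, to get a contributing $D_g\in S\setminus\{D_t\}$ you use the symmetry of the interval defining $S$ about $(a+b)/2$ together with $a=3m-2t$, $b=2m-t$ to show $D_{a+b-t}\in S\setminus\{D_t\}$ whenever $D_t\in S$; the paper instead shows $|S|\ge 3$ by forcing $d_1+d_2=7$ when $D_t\in S$, and your reflection argument is cleaner and checks out ($a+b-t=5m-4t\notin\{a,b,m,t\}$ since $t\neq m$). Second, and more substantially, your endgame replaces the paper's lengthy case analysis (pinning $a\in\{m\pm2,m\pm4\}$, introducing $D_{m\pm3}$, and chasing further jumps) with a one-line re-application of Lemma \ref{lem:2m-t=b} to the secondary jump: in the case $\alpha+\zeta=\gamma$, $\zeta\in D_a$, the lemma gives $g=2m-a$ while the Landing Lemma gives $g=2a-m$, forcing $a=m$; in the case $\gamma+\zeta=\beta$, $\zeta\in D_g$, it gives $b=2m-g$ versus $b=2g-m$, forcing $g=m$, contradicting $D_g\in S$. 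This is legitimate --- Lemma \ref{lem:2m-t=b} is proved independently of the theorem and its hypotheses (rainbow-free exact $(m+n+1)$-coloring, a jump between distinctly colored elements avoiding $c(D_m)$) are verified for the new jump since $c(\gamma)\notin c(D_m\cup\{\alpha,\beta\})$ --- so your argument is a genuine and noticeably shorter alternative to the paper's Cases 1 and 2.
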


\begin{proof}
    For the sake of contradiction, assume $c(\alpha),c(\beta)\notin c(D_m)$ and $c(\alpha) \neq c(\beta)$. Corollary \ref{mainlessthan3} and Lemma \ref{lem:3colornojumps} imply $4\le |c(D_m)|$. Lemma \ref{lem:s2} and Corollary \ref{m<10} further restrict to the situation to $8 \le m \le 10$, $n \le 13$, $5 \le d_1 + d_2 \le 7$, and $|c(D_m)|=4$. Additionally, Lemma \ref{lem:2m-t=b} implies $2m-t = b$. 
    
    Since $a=b$, $a=t$, or $b=t$ implies $m \in \{t,a,b\}$, it follows that $a$, $b$, and $t$ are pairwise distinct. Specifically, $t<m<b<a$ or $a<b<m<t$. The solution $\alpha + \delta = \beta$ means $c(\delta) \in \{c(\alpha),c(\beta)\}$. Now, Lemma \ref{lem:ellminusthree} implies that two of $D_t,D_a,D_b$ contribute $c(\alpha)$ and $c(\beta)$ and the third is non-contributing.
    Thus, every diagonal $D_k$ with $k \notin \{m,t,a,b\}$ must contribute a color distinct from $c(\alpha)$ and $c(\beta)$.
    
    Define $S$ as in (\ref{eqS}). Note that \[|S| = 
    \begin{cases}
        d_1+d_2-3 & \text{if $m \notin  \{D_x \,|\, m+a_2-b_1 < x < m+b_2-a_1\}$}, \\
        d_1+d_2-4 & \text{if $m \in  \{D_x \,|\, m+a_2-b_1 < x < m+b_2-a_1\}$}
    \end{cases}\]
    
   and observe $1 \leq |S|$. It is claimed that $1 \leq |S\setminus \{D_t\}|$. Indeed, if $D_t \in S$, then $|S| \geq 3$ since if $t < m < b$, it follows that
    \[
        m+a_2-b_1 < t \leq b-2 = m-b_1+b_2-2     \]
    so that
    \[2 < b_2-a_2 = d_2 < d_1\] 
    which implies $d_1+d_2\geq 7$, thus $d_1+d_2 = 7$. A similar case holds for $b < m < t$.
    
    Let $D_g \in S \setminus \{D_t\}$, and note that $D_g$ must contribute a color. Say $D_g$ contributes $c(\gamma)$ for some $\gamma \in D_g$. By Lemmas \ref{lem:inside S} and \ref{lem:outside S}, there exists a $\zeta \in D_z$ such that $\alpha + \zeta = \gamma$ or $\gamma + \zeta = \beta$. Note that $c(\zeta) \in \{c(\gamma),c(\alpha),c(\beta)\}$, 
    so $z \in \{g,a,b,t\}$. By Lemmas \ref{jump with alpha} and \ref{jump with beta}, this can only happen if $\alpha + \zeta = \gamma$ and $z=a$, or $\gamma + \zeta = \beta$ and $z=g$. 
    \begin{description}
        \item[Case 1.] Suppose $\alpha + \zeta = \gamma$ and $z=a$.\\ 
        Lemma \ref{lem:diagsum} implies \[g = 2a-m = m-2a_1+2a_2.\] Since $D_g \in S$, it follows that $g < m+b_2-a_1$ implying \[a_2-a_1<b_2-a_2=d_2\]
        and similarly the lower bound implies $a_1-a_2<d_1$. In the case when $t<m<a$, then $d_2<d_1$ and $a_1<a_2$. Since $d_1+d_2=7$, $0< a_2-a_1 < 3$. Similarly, if $a<m<t$, then $0< a_1-a_2 < 3$. Since $b$ is between $m$ and $a$,
        this would require $a=m+2$ or $a=m-2$. Then $b=m+1$ and $g=m+4$ or $b=m-1$ and $g=m-4$, respectively.
        
        First, suppose $a=m+2$. Since $D_{m+4} \in S$, it follows that $D_{g'} := D_{m+3} \in S$ and so there exists a $\gamma'\in D_{g'}$ such that $D_{m+3}$ contributes $c(\gamma')$. By Lemmas \ref{lem:inside S} and \ref{lem:outside S}, there exists a jump $\zeta' \in D_{z'}$ from $\alpha$ to $\gamma'$ or from $\gamma'$ to $\beta$, with $c(\zeta')\in \{c(\gamma'),c(\alpha),c(\beta)\}$. By Lemmas \ref{jump with alpha} and \ref{jump with beta}, this can only happen if $\zeta'$ is a jump from $\alpha$ to $\gamma'$ and $z'=a$, or $\zeta'$ is a jump from $\gamma'$ to $\beta$ and $z'=g'$. If $z'=a=m+2$, then $g'=m+4$ since $\zeta'$ is a jump from $\alpha$ to $\gamma'\in D_{g'}$. Likewise, if $z'=g'=m+3$ then $b=m+6$. In either case, a contradiction arises.

        Similar contradictions can be found when $a=m-2$.
        
        \item[Case 2.] Suppose $\gamma + \zeta = \beta$ and $z=g$. 
        
        Then $b = 2g-m$. Additionally, since $t=2m-b$ and $b=a+t-m$, it follows that $a=2b-m$. Combining these equations yields $a=4g-3m$ implying 
        \begin{equation}\label{eqg} g=\frac{3m+a}{4}=m+\frac{a_2-a_1}{4}.\end{equation}
        In parallel to Case 1, since $D_g \in S$, it follows that $g < m+b_2-a_1$ implying $a_2-a_1<4b_2-4a_1$ and \[3(a_1-a_2)<4(b_2-a_2)=4d_2.\]
        
        Similarly, the lower bound on $S$ implies $3(a_2-a_1)<4d_1$. In the case when $a<m<t$, then $d_1 < d_2$ and $a_2<a_1$.  
        Since $d_1+d_2=7$, it follows that $0< a_1-a_2 < \frac{4}{3}d_2 \leq 8$. Likewise, if $t<m<a$, then $0< a_2-a_1 < 8$. 
        Note that Equation (\ref{eqg}) implies $a_1-a_2 = 4(m-g)$, i.e. $4$ divides $a_1-a_2$. 
        So, $a=m+4$ or $a=m-4$. 
        In the former case,  $D_{m+3} \in S$ with $m+3\notin \{g,t\}$, and in the latter, $D_{m-3} \in S$ with $m-3\notin \{g,t\}$. 
        Lemmas \ref{lem:inside S},  \ref{lem:outside S},\ref{jump with alpha} and \ref{jump with beta} yield similar contradictions to those in Case 1. 
        \end{description}
        
        Therefore, $c(\alpha)\in c(D_m)$ or $c(\beta)\in c(D_m)$ or $c(\alpha) = c(\beta)$.  In other words, there are no jumps between distinctly colored elements.
\end{proof}

\section{Rainbow number of $\mn$ for $x_1+x_2 = x_3$}\label{sec:L}

This final section uses definitions that were introduced in Section \ref{sec:genlem} which can be visualized with the help of Figure \ref{fig: W-Blocks}. 
 In particular, definitions of $W$, $Y$, $P_h$ and $P_v$ are used.
 
 Three lemmas and the final results are presented. The first lemma gives a lower bound on the number of consecutive contributing diagonals and, provided a horizontal or vertical pair intersects $W$, the second gives upper bounds on vertical and horizontal pairs, respectively. These results are leveraged against the third lemma, a relatively unconditional and straightforward upper bound on the total number of horizontal and vertical pairs, to yield Theorem \ref{thm: upperbound}. This theorem gives $\rb(\mn,eq) = m+n+1$ for all $8\leq m\leq n$. Along with several earlier statements to cover the cases of $2\leq m \leq 7$, Theorem \ref{thm: upperbound} subsequently determines the final result and the result claimed to be true in the exposition of this paper, Theorem \ref{thm:final}. 
\begin{lem}\label{lem:consecutivecontributing}
 If $c$ is an exact, rainbow-free $(m+n+1)$-coloring of $\mn$ for $eq$ with $3 \le m\le n$, then there are at least $m+n-2\log_2(m/s_2)-2$ pairs of consecutive, contributing off-diagonals.

\end{lem}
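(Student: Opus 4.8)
The plan is to control the number of non-contributing off-diagonals via Lemmas~\ref{lem:ellminusthree} and~\ref{lem:s2}, and then show that each non-contributing off-diagonal can destroy only a bounded number of consecutive contributing pairs.

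First I would set $\ell=|c(D_m)|$. By Lemma~\ref{lem:ellminusthree} there are at most $\ell-3$ off-diagonals that do not contribute a color, and by Lemma~\ref{lem:s2} we have $\ell\le\log_2(m/s_2)+2$. Next I would observe that the $m+n-2$ off-diagonals, indexed by $\{1,\dots,m+n-1\}\setminus\{m\}$, break into two index-contiguous blocks, namely $D_1,\dots,D_{m-1}$ (containing $m-1$ diagonals) and $D_{m+1},\dots,D_{m+n-1}$ (containing $n-1$ diagonals). Since a pair of consecutive contributing off-diagonals is a pair $D_a,D_{a+1}$ in which both are contributing, hence both off-diagonals, no such pair can involve $D_m$; thus every consecutive contributing pair lies entirely inside one of these two blocks.

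The core estimate is an elementary counting fact which I would establish inside a block of $R$ consecutive off-diagonals containing exactly $j$ non-contributing ones: deleting those $j$ diagonals partitions the $R-j$ contributing diagonals into at most $j+1$ maximal runs, and a maximal run of length $L$ contains exactly $L-1$ consecutive contributing pairs, so the block contains at least $(R-j)-(j+1)=R-2j-1$ consecutive contributing pairs. Degenerate configurations---non-contributing diagonals that are adjacent, or that sit at an end of a block---only decrease the number of maximal runs, so the bound persists. Applying this with $(R,j)=(m-1,k_1)$ and $(R,j)=(n-1,k_2)$, where $k_1+k_2$ is the total number $k$ of non-contributing off-diagonals, the number of consecutive contributing off-diagonal pairs is at least $(m-2-2k_1)+(n-2-2k_2)=m+n-4-2k$. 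Since $k\le\ell-3$ and $\ell\le\log_2(m/s_2)+2$,
\[
m+n-4-2k \ge m+n-4-2(\ell-3) = m+n+2-2\ell \ge m+n-2-2\log_2(m/s_2),
\]
which is the claimed bound.

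The step I expect to require the most care is the block/run counting: verifying that $j$ non-contributing diagonals inside a length-$R$ block can eliminate at most $2j+1$ consecutive contributing pairs, and that this stays valid for every arrangement of the non-contributing diagonals (including when $R-j$ is small or zero). The remaining manipulations are just bookkeeping with the two cited lemmas.
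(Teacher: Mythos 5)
Your proof is correct and takes essentially the same route as the paper: both rest on Lemma~\ref{lem:ellminusthree} and Lemma~\ref{lem:s2} and then account combinatorially for how non-contributing diagonals (and $D_m$) interrupt consecutive contributing pairs. Your two-block maximal-run count is just a more explicit version of the paper's ``each inserted non-contributing diagonal destroys at most one pair'' bookkeeping, and since your intermediate bound $m+n-4-2k$ with $k\le \ell-3$ is at least the paper's, the argument goes through.
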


\bpf Define $\ell = |c(D_m)|$ and notice that
   Corollary  \ref{cor:contributingoffdiag} gives at least $m+n - \log_2(m/s_2) -1$ contributing off-diagonals and Lemma \ref{lem:ellminusthree} gives at most $\ell - 3$ non-contributing off-diagonals.  If all of the contributing off-diagonals were consecutive, then there would be $m+n-\log_2(m/s_2) - 2$ pairs of consecutive, contributing off-diagonals.  
   However, every non-contributing diagonal, including every non-contributing off-diagonal and $D_m$, when inserted between two consecutive, contributing off-diagaonals, can reduce the count of pairs of consecutive, contributing off-diagonals by at most one.  
   Finally, since Lemma \ref{lem:s2} implies $\ell \le \log_2(m/s_2) +2$, there are at least \[(m+n - \log_2(m/s_2)  - 2) -(\ell - 2) \ge m+n-2\log_2(m/s_2)-2\] pairs of consecutive, contributing off-diagonals. \epf

\begin{lem}\label{upperbound for vert/hor pairs}
   Let $c$ be an exact, rainbow-free $(m+n+1)$-coloring of $[m]\times [n]$ for $eq$ with $4\le m \le n$.  If there is a horizontal pair $P_h$ intersecting $W$, then there are at most $2s_2-2$ vertical pairs in $\mn$. Likewise, if there is a vertical pair $P_v$ intersecting $W$, then there are at most $2s_2-2$ horizontal pairs in $\mn$.
\end{lem}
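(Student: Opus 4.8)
The plan is to derive the bound from Lemma~\ref{lem: hor-vert-pairs} together with an explicit description of $\mn\setminus W$. Since $c$ is rainbow-free, Corollary~\ref{mainlessthan3} gives $|c(D_m)|\ge 3$, so the argument in Lemma~\ref{lem:s2} via Corollary~\ref{cor:2power} yields $2s_2\le 2^{|c(D_m)|-2}s_2\le m$, hence $s_2\le m/2\le n/2$. Unwinding the definitions of $W_1$ and $W_2$ then shows that $\mn\setminus W$ is the disjoint union of the bottom-left block $B_1=[m-s_2+1,m]\times[1,s_2]$ and the top-right block $B_2=[1,s_2]\times[n-s_2+1,n]$, each an $s_2\times s_2$ square; the four ``mixed corner'' intersections are empty precisely because $2s_2\le m\le n$.

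Now fix a horizontal pair $P_h$ intersecting $W$ and let $P_v$ be any vertical pair. If $P_v\cap W\ne\emptyset$ and $P_v\cap P_h=\emptyset$, then $P_v\cup P_h$ is a contributing disjoint corner, contradicting Lemma~\ref{lem: hor-vert-pairs}. So every vertical pair either shares a cell with $P_h$, or is disjoint from $W$; in the latter case its two (vertically adjacent) cells lie in the same block, so $P_v\subseteq B_1$ or $P_v\subseteq B_2$. It remains to bound the vertical pairs sharing a cell with $P_h$ and those lying inside a single block, and to check the totals add up to at most $2s_2-2$.

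The heart of the argument is the bound on vertical pairs inside a block, for which the tool is the Landing Lemma (Lemma~\ref{lem:diagsum}). By Lemma~\ref{onedistinctcolorlem}, in a consecutive contributing pair each cell carries the (distinct, non-main-diagonal) color contributed by its diagonal. If $\{(i,j),(i-1,j)\}$ and $\{(i',j'),(i'-1,j')\}$ are distinct vertical pairs in $B_1$ positioned so that $(i',j')=(i-1,j)+\delta$ with $\delta=(i'-i+1,\,i'-i)$, then $\delta\in D_{m-1}$ has both coordinates at most $s_2-1$, and one shows $c(\delta)$ is forced into $c(D_m)$ except in a short list of coincidences; when it lands in $c(D_m)$ the three colors of this solution are pairwise distinct, a rainbow solution. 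Each coincidence ties up one of the only $|c(D_m)|-3$ non-contributing off-diagonals (Lemma~\ref{lem:ellminusthree}), so coincidences cannot occur too often. Pushing this through, together with its mirror image for $B_2$ and the easy observation that each of the two cells of $P_h$ lies in at most two vertical pairs, yields the bound $2s_2-2$. The symmetric statement follows by reflecting the whole argument across $D_m$, interchanging rows with columns and horizontal with vertical pairs.

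The main obstacle is exactly this last counting step: the ``escape'' cases, in which the forced third element $\delta$ is itself new-colored so that the constructed solution is degenerate rather than rainbow, must be enumerated and charged to distinct non-contributing off-diagonals, and it is this bookkeeping — rather than any single clever solution — that pins the count down to the exact value $2s_2-2$ instead of a weaker bound.
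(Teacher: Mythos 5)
Your opening moves (the description of $\mn\setminus W$ as two $s_2\times s_2$ corner blocks, and the use of Lemma~\ref{lem: hor-vert-pairs} to exclude vertical pairs that meet $W$ and are disjoint from $P_h$) are fine, but the proof stops exactly where the lemma's content begins, and the deferred counting does not close as sketched. Two concrete problems. First, the in-block bound is never proved: your Landing-Lemma setup takes two vertical pairs with offset $\delta=(i'-i+1,\,i'-i)$, which lies in $D_{m-1}$ only when $j'-j=i'-i$, i.e.\ only for specially positioned pairs of vertical pairs, so the scheme does not even apply to an arbitrary pair of vertical pairs in $B_1$; and the claim that the ``coincidence'' cases can be charged injectively to the at most $|c(D_m)|-3$ non-contributing off-diagonals of Lemma~\ref{lem:ellminusthree} is asserted, not carried out. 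Second, your own accounting cannot reach $2s_2-2$: you keep a separate class of vertical pairs sharing a cell with $P_h$ (up to four of them, two per cell), so even a perfect per-block bound of $s_2-1$ would give a total of $2s_2+2$. To hit $2s_2-2$ you would have to eliminate that class, not count it, and no argument for that is offered. Since you yourself flag this bookkeeping as ``the main obstacle,'' the proposal is a plan rather than a proof.

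The paper's route avoids both issues and is genuinely different in its second half: it never analyzes solutions inside the corner blocks. It first reduces to $|c(D_m)|\ge 4$ via Theorem~\ref{thm:main3colors}, hence $s_2\le m/4$ by Lemma~\ref{lem:s2}; then Lemma~\ref{lem:consecutivecontributing} gives at least $m+n-2\log_2(m/s_2)-2$ pairs of consecutive contributing off-diagonals, of which at most $2(2s_2-2)$ lie in the corner region $Y$, leaving at least four consecutive contributing pairs of elements meeting $W$; Theorem~\ref{thm:nojumpsever} forces each such pair to be horizontal or vertical. With that abundance in hand, Lemma~\ref{lem: hor-vert-pairs} forces every vertical pair out of $W$, and the final bound is the coarse count that $Y$ meets at most $2(2s_2-2)$ consecutive-diagonal pairs, of which at most half can be vertical. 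So the ingredients your proposal omits entirely --- Lemma~\ref{lem:consecutivecontributing}, Theorem~\ref{thm:nojumpsever}, and the $|c(D_m)|\ge 4$ reduction --- are precisely what pins the count to $2s_2-2$ in the paper; without them, and without executing your own charging argument and disposing of the pairs overlapping $P_h$, the lemma is not established.
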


\bpf Note that if $|c(D_m)| = 3$ then $c$ is not rainbow-free by Theorem \ref{thm:main3colors} so assume $|c(D_m)| \ge 4$.  Then, Lemma \ref{lem:s2} gives $s_2 \le m/4$.  Further, Lemma \ref{lem:consecutivecontributing} implies at least $m+n - 2\log_2(m/s_2) - 2$ pairs of consecutive, contributing off-diagonals.  At most $2(2s_2 - 2)$ of these pairs are entirely contained in $Y$ so at least $m+n - 2\log_2(m/s_2) - 2 - 2(2s_2 -2)$ pairs of consecutive, contributing off-diagonals that intersect $W$.  Using $4\le m\leq n$, $m/s_2 \leq m/2$, $4s_2 \leq m$ and $\log_2(x)$ is increasing gives 

   \[ m+n - 2\log_2(m/s_2) - 2 - 2(2s_2 -2) \ge  m - 2\log_2(m) + 4 \ge4. \]
   
That is, there are at least $4$ pairs of consecutive, contributing off-diagonals that intersect $W$.  Further, Theorem \ref{thm:nojumpsever} indicates that these pairs are either horizontal or vertical.  Thus, if any of these pairs that intersect $W$ is a horizontal pair, there cannot also be a vertical pair that intersects $W$ without creating a contributing disjoint corner which, by Lemma \ref{lem: hor-vert-pairs}, implies $c$ is not rainbow-free.  Thus, all vertical pairs must be contained completely within $Y$.  Only half of the pairs in $Y$ can be vertical so, in this case, there are are most $2s_2 - 2$ vertical pairs in $\mn$.  A similar result is obtained when it is assumed that a vertical pair intersects $W$.\epf

\begin{lem}\label{lem:max hor-vert pairs} 
    Let $c$ be an exact, rainbow-free (m+n+1)-coloring of $\mn$ for $eq$ with $3\le m\le n$.  Then, there are at most $n-1$ possible horizontal pairs, and there are at most $m-1$ vertical pairs.
\end{lem}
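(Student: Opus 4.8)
The plan is to prove each bound by exhibiting an injection from the set of such pairs into a set of the correct size: send a horizontal pair $\{(a_1,a_2),(a_1,a_2+1)\}$ to its smaller column $a_2\in\{1,\dots,n-1\}$, and send a vertical pair $\{(a_1,a_2),(a_1-1,a_2)\}$ to its smaller row $a_1-1\in\{1,\dots,m-1\}$. It then suffices to show each of these maps is injective; since the two arguments are obtained from one another by interchanging rows and columns, I will describe the horizontal case in detail.

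Suppose $P=\{(a_1,a_2),(a_1,a_2+1)\}$ and $P'=\{(a_1',a_2),(a_1',a_2+1)\}$ are horizontal pairs with the same column $a_2$, and assume $P\neq P'$; without loss of generality $a_1<a_1'$. By definition of a horizontal pair the colors of all four of these elements lie outside $c(D_m)$, and by Lemma \ref{onedistinctcolorlem} the color of each such element is the unique color its off-diagonal contributes; in particular, two of these elements lying in \emph{different} off-diagonals have different colors. Now $(a_1,a_2)\in D_{m-a_1+a_2}$ while $(a_1',a_2+1)\in D_{m-a_1'+a_2+1}$, and since $a_1<a_1'$ and $a_2<a_2+1$ there is a jump from $(a_1,a_2)$ to $(a_1',a_2+1)$; these two diagonals coincide exactly when $a_1'=a_1+1$. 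So if $a_1'\geq a_1+2$, the two elements lie in distinct off-diagonals, hence have distinct colors, both avoiding $c(D_m)$, and this contradicts Theorem \ref{thm:nojumpsever}. Therefore $a_1'=a_1+1$.

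It remains to rule out $a_1'=a_1+1$, i.e. the case where $P$ and $P'$ fill a $2\times 2$ block of two consecutive rows and two consecutive columns. For a suitable index $k$, two of the block's four corners lie on a common off-diagonal $D_k$, hence share its unique contributed color, while the other two corners lie on the neighboring off-diagonals and carry their contributed colors; moreover the block simultaneously contains two vertical pairs, one in each of its columns. The plan here is to use the presence of these horizontal and vertical pairs, together with Lemma \ref{lem: hor-vert-pairs} and the Landing Lemma (Lemma \ref{lem:diagsum}), to locate a third element whose color is forced to be new --- either in one of the ``doubled'' diagonals reached by adding two corners of the block, or through the block's interaction with $W$ and $(s_2,s_2)$ --- thereby producing a rainbow solution. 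This contradiction establishes injectivity, so there are at most $n-1$ horizontal pairs, and the mirror-image argument gives at most $m-1$ vertical pairs.

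The step I expect to be the main obstacle is precisely this adjacent $2\times 2$ case. The ``obvious'' jump one would like to invoke connects the two corners lying on the common diagonal $D_k$, which share a color, so Theorem \ref{thm:nojumpsever} gives no information; in fact the $2\times 2$ block, taken in isolation, is consistent with rainbow-freeness (its only solution with the main diagonal repeats a color), so one is forced to exploit the rest of the coloring --- the surrounding contributing diagonals, or $W$ and the contributing-disjoint-corner obstruction of Lemma \ref{lem: hor-vert-pairs} --- to finish.
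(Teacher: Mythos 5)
Your setup and your treatment of rows differing by at least two are exactly the paper's argument: the paper's proof consists of the single observation that, by Lemma~\ref{onedistinctcolorlem} and Theorem~\ref{thm:nojumpsever}, two distinct horizontal pairs cannot occupy the same two consecutive columns (dually for vertical pairs and rows), followed by counting the $n-1$ consecutive column pairs and $m-1$ consecutive row pairs. Your jump from $(a_1,a_2)$ to $(a_1',a_2+1)$, combined with the fact that each element of a horizontal pair carries the unique contributed color of its (contributing) off-diagonal, is precisely the intended justification of that observation when $a_1'\ge a_1+2$.

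The gap is the adjacent case $a_1'=a_1+1$, which you leave as a plan rather than a proof, and the tools you name do not close it as stated. Inside the $2\times 2$ block every horizontal pair shares an element with every vertical pair, so Lemma~\ref{lem: hor-vert-pairs} (which requires $P_v\cap P_h=\emptyset$ with both pairs meeting $W$) cannot be applied to pairs drawn from the block alone, and nothing in the hypotheses of Lemma~\ref{lem:max hor-vert pairs} hands you a second pair elsewhere that is guaranteed to meet $W$; arguments of that kind occur in the paper only inside Theorem~\ref{thm:main3colors} and Theorem~\ref{thm: upperbound}, and since Theorem~\ref{thm: upperbound} cites the present lemma, importing them here risks circularity. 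Likewise the ``doubled diagonal'' element $(a_1,a_2+1)+(a_1+1,a_2)=(2a_1+1,2a_2+1)$ need not lie in $\mn$, and even when it does, rainbow-freeness only forces its color into the two-element set $\{c((a_1,a_2+1)),c((a_1+1,a_2))\}$, which is not by itself a contradiction. So as submitted your argument establishes injectivity of your column map only for pairs in rows at distance at least two, and the stated bounds remain unproven. You have, to be fair, put your finger on a genuinely delicate point: the paper's two-sentence proof asserts the injectivity claim directly from Lemma~\ref{onedistinctcolorlem} and Theorem~\ref{thm:nojumpsever} without separating out this adjacent configuration, in which the two same-diagonal corners share a color and no jump between distinctly colored elements is available; but identifying the obstacle is not the same as overcoming it, so the proposal is incomplete exactly where you predicted it would be.
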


\bpf
 Lemma \ref{onedistinctcolorlem} and Theorem \ref{thm:nojumpsever} imply that two distinct horizontal pairs cannot both intersect the $i$th and $(i+1)$th columns, and two distinct vertical pairs cannot both intersect the $j$th and $(j+1)$th rows. Since there are $n-1$ pairs of consecutive columns and $m-1$ pairs of consecutive rows, the desired result is obtained.
\epf

\begin{thm}\label{thm: upperbound}
    
     If $c$ is an exact $(m+n+1)$-coloring of $\mn$ with $8\le m \le n$, then $\mn$ contains a rainbow solution to $eq$.  
\end{thm}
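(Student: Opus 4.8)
The plan is a proof by contradiction that plays a lower bound on the number of horizontal and vertical pairs against an upper bound. Assume $c$ is rainbow-free. Since $c$ is an exact $(m+n+1)$-coloring with $3\le m\le n$, Corollary \ref{mainlessthan3} gives $|c(D_m)|\ge 3$, and Theorem \ref{thm:main3colors} rules out $|c(D_m)|=3$, so $|c(D_m)|\ge 4$; then Lemma \ref{lem:s2} forces $2\le s_2\le m/4$.

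Next I would record that every consecutive contributing pair of elements $\{\alpha,\beta\}$, with $\alpha=(a_1,a_2)\in D_a$ and $\beta=(b_1,b_2)\in D_{a+1}$, is a horizontal or vertical pair. Writing $v=b_2-a_2$, the diagonal condition $(b_1-b_2)=(a_1-a_2)-1$ gives $b_1-a_1=v-1$; a jump from $\alpha$ to $\beta$ requires $v\ge 2$, a jump from $\beta$ to $\alpha$ requires $v\le -1$, and Theorem \ref{thm:nojumpsever} forbids both, since $c(\alpha),c(\beta)\notin c(D_m)$ and $c(\alpha)\ne c(\beta)$. Hence $v\in\{0,1\}$, i.e.\ $\beta=(a_1-1,a_2)$ (vertical) or $\beta=(a_1,a_2+1)$ (horizontal).

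Now comes the counting. By Lemma \ref{lem:consecutivecontributing} there are at least $m+n-2\log_2(m/s_2)-2$ pairs of consecutive, contributing off-diagonals; choosing the contributed element in each of the two diagonals of such a pair yields, by the previous paragraph, at least that many \emph{distinct} horizontal or vertical element pairs (distinct because the diagonals involved differ). Exactly as in the proof of Lemma \ref{upperbound for vert/hor pairs}, at most $2(2s_2-2)$ of these lie entirely in $Y$, and with $s_2\le m/4$, $m\le n$ and $m\ge 8$ one checks that $m+n-2\log_2(m/s_2)-2-2(2s_2-2)>0$, so some horizontal or vertical pair meets $W$. If a horizontal pair meets $W$, Lemma \ref{upperbound for vert/hor pairs} bounds the number of vertical pairs by $2s_2-2$ and Lemma \ref{lem:max hor-vert pairs} bounds the number of horizontal pairs by $n-1$; if instead a vertical pair meets $W$, the horizontal pairs number at most $2s_2-2$ and the vertical pairs at most $m-1\le n-1$. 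Either way the total number of horizontal and vertical pairs is at most $n+2s_2-3$.

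Comparing the two bounds gives $m+n-2\log_2(m/s_2)-2\le n+2s_2-3$, i.e.\ $m+1\le 2s_2+2\log_2(m/s_2)$. Using $2s_2\le m/2$ and $2\log_2(m/s_2)\le 2\log_2(m/2)=2\log_2 m-2$ (both from $2\le s_2\le m/4$), the right-hand side is at most $\tfrac{m}{2}+2\log_2 m-2$, so $m+1\le \tfrac{m}{2}+2\log_2 m-2$, i.e.\ $\tfrac{m}{2}+3\le 2\log_2 m$, which fails for every $m\ge 8$ (at $m=8$ it reads $7\le 6$, and the left side grows faster thereafter). This contradiction shows $\mn$ contains a rainbow solution to $eq$. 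The only delicate step is the passage from ``consecutive contributing off-diagonals'' to ``distinct horizontal/vertical element pairs meeting $W$,'' but this mirrors the argument already carried out inside the proof of Lemma \ref{upperbound for vert/hor pairs}, so I anticipate no real obstacle beyond careful bookkeeping.
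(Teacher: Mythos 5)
Your proposal is correct and follows essentially the same route as the paper's proof: Theorem \ref{thm:main3colors} and Lemma \ref{lem:s2} to force $|c(D_m)|\ge 4$ and $s_2\le m/4$, Lemma \ref{lem:consecutivecontributing} plus Theorem \ref{thm:nojumpsever} to produce many horizontal/vertical pairs, and Lemmas \ref{upperbound for vert/hor pairs} and \ref{lem:max hor-vert pairs} to cap them, yielding a numerical contradiction for $m\ge 8$. The only differences are cosmetic: you merge the paper's two cases into a single total-count inequality and make explicit (via the $Y$-versus-$W$ count already inside Lemma \ref{upperbound for vert/hor pairs}) that some horizontal or vertical pair meets $W$, a step the paper leaves implicit.
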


\begin{proof}
    Assume, for the sake of contradiction, that $c$ is rainbow-free.  By Theorem \ref{thm:main3colors}, $|c(D_m)| \ge4$. Now, Lemma \ref{lem:s2} implies $4 \leq \log_2(m/s_2)+2$ so that $m \geq 4s_2$.
    \begin{description}
        \item[Case 1.] There exists a horizontal pair intersecting $W$. 
        \par
        By Lemma \ref{lem:consecutivecontributing}, there exist at least $m+n-2\log_2(m/s_2)-2$ pairs of consecutive contributing off-diagonals. By Theorem \ref{thm:nojumpsever}, there are at least $m+n-2\log_2(m/s_2)-2$ consecutive contributing pairs of elements which each must be a vertical or horizontal pair. Since Lemma \ref{upperbound for vert/hor pairs} implies there at most $2s_2-2$ vertical pairs, there must be at least $m+n-2\log_2(m/s_2)-2s_2$ horizontal pairs in $\mn$.
        \par
        Using $m/s_2 \le m/2$, $s_2\le m/4$, $8\le m$ and that $\log_2(x)$ is increasing, it follows that \[m + n - 2\log_2(m/s_2) - 2s_2 \geq n + m/2 - 2\log_2(m) + 2 \geq n > n-1,\]
        contradicting Lemma \ref{lem:max hor-vert pairs}.

        \item[Case 2.] There exists a vertical pair intersecting $W$. 
        \par
        Using an argument similar to the first case shows that there must be at least $m+n-2\log_2(m/s_2)-2s_2$ vertical pairs $\mn$. Again, \[m + n - 2\log_2(m/s_2) - 2s_2 > n-1 \ge m-1,\] contradicting Lemma \ref{lem:max hor-vert pairs}. 
    \end{description}
 Since both cases give a contradiction, $c$ is not rainbow-free.
\end{proof}

Combining Lemma \ref{mnlower}, Corollary \ref{cor:m=2}, Theorem \ref{thm:main3colors}, and Theorem \ref{thm: upperbound} gives Theorem \ref{thm:final}.

\begin{thm}\label{thm:final}

   If $2\le m \le n$ , then $\rb(\mn,eq) = m+n+1$.

\end{thm}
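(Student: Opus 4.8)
The plan is to assemble Theorem~\ref{thm:final} from results already in hand; essentially all of the work has been done, so what remains is bookkeeping. Lemma~\ref{mnlower} supplies the lower bound $m+n+1 \le \rb(\mn,eq)$ for every $2 \le m \le n$, so I would only need the matching upper bound: that every exact $(m+n+1)$-coloring of $\mn$ contains a rainbow solution to $eq$. I would establish this by splitting into the three ranges $m=2$, $3 \le m \le 7$, and $m \ge 8$.

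For $m=2$, Corollary~\ref{cor:m=2} already gives $\rb([2]\times[n]) = m+n+1$ outright, so there is nothing to check. For $m \ge 8$, Theorem~\ref{thm: upperbound} states that every exact $(m+n+1)$-coloring of $\mn$ with $8 \le m \le n$ contains a rainbow solution, and combining this with Lemma~\ref{mnlower} yields $\rb(\mn,eq) = m+n+1$.

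The only range needing a short argument is $3 \le m \le 7$, and the key point is that Theorem~\ref{thm:main3colors} in fact covers it. Suppose $c$ were an exact, rainbow-free $(m+n+1)$-coloring of $\mn$ with $3 \le m \le 7 \le n$. By Lemma~\ref{lem:s2}, $|c(D_m)| \le \lfloor \log_2(m) + 1 \rfloor \le \lfloor \log_2(7)+1 \rfloor = 3$, so in particular $|c(D_m)| \le 3$, which contradicts Theorem~\ref{thm:main3colors}. Hence no rainbow-free exact $(m+n+1)$-coloring exists for $3 \le m \le 7 \le n$, so $\rb(\mn,eq) \le m+n+1$, and together with Lemma~\ref{mnlower} this gives equality.

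Since these ranges exhaust $2 \le m \le n$, the theorem follows. I do not expect any genuine obstacle at this step: every difficulty has been pushed into Theorem~\ref{thm:main3colors} (the analysis when the main diagonal has three colors, which ultimately invokes the forbidden contributing disjoint corner of Lemma~\ref{lem: hor-vert-pairs}) and Theorem~\ref{thm: upperbound} (which rests on the no-jumps theorem, Theorem~\ref{thm:nojumpsever}, and the consecutive-contributing-diagonal count of Lemma~\ref{lem:consecutivecontributing}). The one thing worth spelling out in the write-up is the elementary inequality $\lfloor \log_2(m)+1 \rfloor \le 3$ for $m \le 7$, which is precisely what lets Theorem~\ref{thm:main3colors} dispatch the range $3 \le m \le 7$ without a separate four-color argument.
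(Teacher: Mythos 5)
Your proposal is correct and matches the paper's own proof: lower bound from Lemma~\ref{mnlower}, $m=2$ via Corollary~\ref{cor:m=2}, $3\le m\le 7$ by noting the main diagonal has at most three colors so Theorem~\ref{thm:main3colors} applies, and $8\le m\le n$ via Theorem~\ref{thm: upperbound}. (One trivial slip: the middle case should be stated as $3\le m\le 7$ with $m\le n$, not $7\le n$, but your argument only uses $m\le 7$, so nothing breaks.)
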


\bpf First, Corollary \ref{cor:m=2} gives the desired result when $m=2$.  Now, Lemma \ref{mnlower} gives that $m+n+1 \le \rb(\mn,eq)$ for $3\le m \le n$.  For $3\le m \le 7$, at most three colors are in the main diagonal so Theorem \ref{thm:main3colors} gives $\rb(\mn,eq) = m+n+1$.  Finally, for $8\le m \le n$, Theorem \ref{thm: upperbound} gives $\rb(\mn,eq) = m+n+1$.\epf

\section*{Acknowledgements} 
Thank you to the University of Wisconsin-La Crosse (UWL) Deans Distinguished Fellows program that supported the third and sixth authors.  Thanks also to UWL Undergraduate Research and Creativity grants that supported the first, third and sixth authors.  Finally, thanks to UWL Department of Mathematics and Statistics Bange/Wine Undergraduate Research Endowment that supported the third author.


\begin{thebibliography}{20}

    \bibitem{ATHNWY} K. Ansaldi, H. El Turkey, J. Hamm, A. Nu'Man, N. Warnberg and  M. Young.  Rainbow Numbers of $\mathbb{Z}_n$ for $a_1x_1+a_2x_2 + a_3x_3 = b$.  \emph{INTEGERS}, {\bf 20}, A51 (2020). \url{http://math.colgate.edu/~integers/vol20.html}

    \bibitem{AF} M. Axenovich and D. Fon-Der-Flaass, On rainbow arithmetic progressions, \emph{Electron. J. Combin.} {\bf 11} (2004), no. 1, Research Paper 1, 7pp. \url{https://www.combinatorics.org/ojs/index.php/eljc/article/view/v11i1r1}
		
	\bibitem{SWY} Z. Berikkyzy, A. Schulte, E. Sprangel, S. Walker, N. Warnberg and M. Young. Anti-van der Waerden Numbers on Graphs. \emph{Graphs and Combinatorics}, {\bf 38(124)} (2022). \url{https://doi.org/10.1007/s00373-022-02516-9}

    \bibitem{BSY} Z. Berikkyzy, A. Schulte, and M. Young, Anti-van der Waerden numbers of 3-term arithmetic progressions, \emph{Electron. J. Combin.} {\bf 24} (2017), no. 2, Paper 2.39, 9 pp. \url{https://www.combinatorics.org/ojs/index.php/eljc/article/view/v24i2p39}
		
	\bibitem{BKKTTY} E. Bevilacqua, A. King, J. Kritschgau, M. Tait, S. Tebon and M. Young, Rainbow numbers for $x_1 + x_2 = kx_3$ in $\mathbb{Z}_n$, \emph{INTEGERS}, {\bf 20}, A50 (2020). \url{http://math.colgate.edu/~integers/vol20.html}
		
    \bibitem{DMS} S. Butler, C. Erickson, L. Hogben, K. Hogenson, L. Kramer, R.L. Kramer, J. Lin, R.R. Martin, D. Stolee, N. Warnberg and M. Young, Rainbow Arithmetic Progressions, \emph{J. Comb.} {\bf 7(4)} (2016), 595--626.\url{https://www.intlpress.com/site/pub/pages/journals/items/joc/content/vols/0007/0004/a003/index.php}

    \bibitem{FGRWW} K. Fallon, C. Giles, H. Rehm, S. Wagner and N. Warnberg, Rainbow numbers of $[n]$ for $\sum_{i=1}^{k-1} x_i = x_k$, \emph{Austral. J. Combin.} {\bf 77(1)} (2020), 1--8.\url{https://ajc.maths.uq.edu.au/pdf/77/ajc_v77_p001.pdf}

    \bibitem{RFC} M. Huicochea and A. Montejano, The Structure of Rainbow-Free Colorings For Linear Equations on Three Variables in $\mathbb{Z}_p$, \emph{Integers} {\bf 15A} (2015), A8. \url{http://math.colgate.edu/~integers/vol15a.html}

    \bibitem{J}	V. Jungi\'c, J. Licht (Fox), M. Mahdian, J. Ne\u{s}etril, and R. Radoi\u{c}i\'c, Rainbow arithmetic progressions and anti-Ramsey results, \emph{Combinatorics, Probability and Computing} {\bf 12} (2003), no 5-6, 599--620. \url{https://doi.org/10.1017/S096354830300587X}
		
	\bibitem{LM} B. Llano and A. Montejano, Rainbow-free Colorings of $x+y = cz$ in $\mathbb{Z}_p$, \emph{Discrete Math.} {\bf 312}, (2012), 2566--2573. \url{https://doi.org/10.1016/j.disc.2011.09.005}
		
	\bibitem{MW} J. Miller and N. Warnberg, Anti-van der Waerden Number of Graph Products of Cycles, \url{https://arxiv.org/abs/2205.11621}.
		
	\bibitem{RSW} H. Rehm, A. Schulte and N. Warnberg, Anti-van der Waerden numbers on Graph Products, \emph{Austral. J. Combin.} {\bf 73(3)} (2019), 486--500. \url{https://ajc.maths.uq.edu.au/pdf/73/ajc_v73_p486.pdf}
		
	\bibitem{U} K. Uherka, An introduction to Ramsey theory and anti-Ramsey theory on the integers, Master's Creative Component (2013), Iowa State University.

    \bibitem{finabgroup} M. Young.  Rainbow Arithmetic Progressions in Finite Abelian Groups, \emph{J. Comb.} {\bf 9(4)} (2018), 619--629. \url{https://www.intlpress.com/site/pub/pages/journals/items/joc/content/vols/0009/0004/a003/index.php}
        
\end{thebibliography}
\end{document}